\newtheorem{thm}{Theorem}
\newtheorem{cor}[thm]{Corollary}
\newtheorem{lem}[thm]{Lemma}
\newtheorem{prop}[thm]{Proposition}
\newtheorem{conj}[thm]{Conjecture}
\theoremstyle{definition}
\newtheorem{definition}[thm]{Definition}
\newtheorem{remark}[thm]{Remark}
\newtheorem{example}[thm]{Example}
\newtheorem{openproblem}[thm]{Open Problem}
\numberwithin{thm}{section}
\numberwithin{equation}{section}
\newcommand{\EQ}[1]{\eqref{eq:#1}} 
\newcommand{\LEM}[1]{Lemma~\ref{lem:#1}}    
\newcommand{\DEF}[1]{Definition~\ref{def:#1}}    
\newcommand{\THM}[1]{Theorem~\ref{thm:#1}}  
\newcommand{\REM}[1]{Remark~\ref{rem:#1}}  
\newcommand{\PROP}[1]{Proposition~\ref{prop:#1}}  
\newcommand{\COR}[1]{Corollary~\ref{cor:#1}} 
\newcommand{\SEC}[1]{Section~\ref{sec:#1}}  
\newcommand{\OPROB}[1]{Open Problem~\ref{oprob:#1}}  
\newcommand{\CONJ}[1]{Conjecture~\ref{conj:#1}}  
\DeclareMathOperator{\diam}{diam}
\DeclareMathOperator{\dist}{dist}
\DeclareMathOperator{\USC}{USC}
\DeclareMathOperator{\LSC}{LSC}
\DeclareMathOperator{\Lip}{Lip}
\newcommand{\R}{\ensuremath{\mathbb{R}}}
\newcommand{\iden}{\ensuremath{I_n}}
\newcommand{\ep}{\varepsilon}
\newcommand{\distep}[3]{\rho_{#1}(#2,#3)}
\newcommand{\path}{d}
\newcommand{\ILP}{\Delta_\infty^+}
\newcommand{\ILM}{\Delta_\infty^-}
\newcommand{\IL}{\Delta_\infty}
\newcommand{\E}{\mathbb{E}}
\newcommand{\prob}{\mathbb{P}}
\begin{document}
\title[A finite difference approach to the infinity {L}aplace equation]{A finite difference approach to the infinity {L}aplace equation and tug-of-war games}
\author{Scott N. Armstrong}
\address{Department of Mathematics,
University of California, Berkeley, CA 94720.}
\email{sarm@math.berkeley.edu}
\author{Charles K. Smart}
\address{Department of Mathematics,
University of California, Berkeley, CA 94720.}
\email{smart@math.berkeley.edu}

\date{\today}

\keywords{Infinity Laplace equation, tug-of-war, finite difference approximations}
\subjclass[2000]{Primary 35J70, 91A15.}

\begin{abstract}
We present a modified version of the two-player ``tug-of-war" game introduced by Peres, Schramm, Sheffield, and Wilson \cite{Peres:2009}. This new tug-of-war game is identical to the original except near the boundary of the domain $\partial \Omega$, but its associated value functions are more regular. The dynamic programming principle implies that the value functions satisfy a certain finite difference equation. By studying this difference equation directly and adapting techniques from viscosity solution theory, we prove a number of new results.

We show that the finite difference equation has unique maximal and minimal solutions, which are identified as the value functions for the two tug-of-war players. We demonstrate uniqueness, and hence the existence of a value for the game, in the case that the running payoff function is nonnegative. We also show that uniqueness holds in certain cases for sign-changing running payoff functions which are sufficiently small. In the limit $\ep \to 0$, we obtain the convergence of the value functions to a viscosity solution of the normalized infinity Laplace equation. 

We also obtain several new results for the normalized infinity Laplace equation $-\Delta_\infty u = f$. In particular, we demonstrate the existence of solutions to the Dirichlet problem for any bounded continuous $f$, and continuous boundary data, as well as the uniqueness of solutions to this problem in the generic case. We present a new elementary proof of uniqueness in the case that $f>0$, $f< 0$, or $f\equiv 0$. The stability of the solutions with respect to $f$ is also studied, and an explicit continuous dependence estimate from $f\equiv 0$ is obtained.
\end{abstract}

\maketitle

%%%%%%%%%%%%%%%%%%%%%%%%%%%%%%%%%%%%%%%%%%%%%%%%
%%%%%%%%%%%%%%%%%%%%%%%%%%%%%%%%%%%%%%%%%%%%%%%%
%%%%%%%%%%%%%%%%%%%%%%%%%%%%%%%%%%%%%%%%%%%%%%%%
%%%%%%%%%%%%%%%%%%%%%%%%%%%%%%%%%%%%%%%%%%%%%%%%
%%%%%%%%%%%%%%%%%%%%%%%%%%%%%%%%%%%%%%%%%%%%%%%%
%%%%%%%%%%%%%%%%%%%%%%%%%%%%%%%%%%%%%%%%%%%%%%%%
%%%%%%%%%%%%%%%%%%%%%%%%%%%%%%%%%%%%%%%%%%%%%%%%

\section{Introduction}

In this article, we use a finite difference approximation to study the normalized infinity Laplace partial differential equation
\begin{equation}\label{eq:infinity-Laplace-PDE}
-\Delta_\infty u := -|Du|^{-2} \sum_{i,j=1}^n u_{x_ix_j}u_{x_i} u_{x_j} = f
\end{equation}
in a bounded domain $\Omega \subseteq \R^n$. The equation \EQ{infinity-Laplace-PDE} arises in the $L^\infty$ calculus of variations as the Euler-Lagrange equation for properly interpreted minimizers of the energy functional $u \mapsto \| D u \|_{L^\infty(\Omega)}$. A viscosity solution $u$ of \EQ{infinity-Laplace-PDE} with $f\equiv 0$ is a so-called \emph{absolutely minimizing Lipschitz} function. This means that for every open subset $V \subseteq \Omega$, the function $u |_{\bar V}$ has the smallest possible Lipschitz constant in the class of all Lipschitz functions on $\bar V$ which are equal to $u$ on $\partial V$.  We refer to Aronsson, Crandall, and Juutinen \cite{Aronsson:2004} for more background and details.

Recently, Peres, Schramm, Sheffield, and Wilson \cite{Peres:2009} showed that equation \EQ{infinity-Laplace-PDE} also arises in the study of certain two-player, zero-sum, stochastic games. They introduced a random-turn game called \emph{$\ep$-tug-of-war}, in which two players try to move a token in an open set $\Omega$ toward a favorable spot on the boundary $\partial \Omega$. During each round of the game, a fair coin is tossed to determine which player may move the token, and the winner may move it a maximum distance of $\ep > 0$ from its previous position. The payoff is determined by a running payoff function $f$, and a terminal payoff function $g$. We describe tug-of-war in more detail in \SEC{two}.

In \cite{Peres:2009} it was shown that under the hypothesis that the running payoff function $f$ is positive, negative, or identically zero, this game has an expected value. Moreover, they showed that as $\ep \to 0$, the expected value function converges to the unique viscosity solution $u$ of the equation $-\Delta_\infty u = f$ with $u=g$ on $\partial \Omega$. The probabilistic methods employed in \cite{Peres:2009} yielded new results and a better understanding of the PDE \EQ{infinity-Laplace-PDE}. Connections between stochastic games and the infinity Laplace equation have also been investigated by Barron, Evans, and Jensen \cite{Barron:2008}.

In this paper, we use PDE techniques to study the value functions for tug-of-war games and the solutions of equation \EQ{infinity-Laplace-PDE}. By changing the rules of tug-of-war when the token is very near the boundary, we obtain a new game whose value functions better approximate solutions of \EQ{infinity-Laplace-PDE}. In particular, the upper and lower value functions of this modified game, which we call \emph{boundary-biased $\ep$-step tug-of-war}, are continuous. In fact, if $f\equiv 0$ and $g$ is Lipschitz, then the (unique) value function is a \emph{minimizing Lipschitz extension} of $g$ to $\bar\Omega$, for each $\ep > 0$. Furthermore, the upper and lower value functions are equal, and hence the game possesses a value, under more general hypotheses than is presently known for standard $\ep$-tug-of-war.

In contrast to \cite{Peres:2009}, we make little use of probabilistic methods in this paper. Instead, we study the difference equation
\begin{equation}\label{eq:intro-FD}
-\Delta_\infty^\ep u(x) = f(x) \quad \mbox{in} \  \Omega,
\end{equation}
which is derived from the dynamic programming principle. The finite difference operator $-\Delta_\infty^\ep$ is defined in \EQ{finite-difference-infinity-laplacian}, below. We show that if $f$ is continuous, bounded, and does not change sign in $\Omega$, then \EQ{intro-FD} possesses a unique solution subject to any given, continuous boundary data $g$. It follows that the boundary-biased game has a value in this case. Furthermore, we show that for each bounded, continuous $f$, any sequence of solutions of \EQ{intro-FD} converges (after possibly taking a subsequence) to a solution of the continuum equation \EQ{infinity-Laplace-PDE} as $\ep \to 0$. In the case $f\equiv 0$, Oberman \cite{Oberman:2005} and Le Gruyer \cite{LeGruyer:2007} obtained similar existence, uniqueness and convergence results for a similar difference equation on a finite graph.

Our analysis of \EQ{intro-FD} yields several new results concerning the continuum equation. For any $f \in C(\Omega) \cap L^\infty(\Omega)$ and $g\in C(\partial \Omega)$, we show that the Dirichlet problem
\begin{equation} \label{eq:intro-DP}
\left\{ \begin{array}{lll}
-\Delta_\infty u = f & \mbox{ in } & \Omega, \\
u = g & \mbox{ on } & \partial \Omega,
\end{array}\right.
\end{equation}
possesses a unique maximal and minimal viscosity solution. Existence has been previously shown only for $f$ satisfying $f > 0$, $f < 0$, or $f\equiv 0$, in which case we also have uniqueness. The latter uniqueness result appears in \cite{Peres:2009} as well as the paper of Lu and Wang \cite{Lu:2008}. The case $f\equiv 0$ is Jensen's famous result \cite{Jensen:1993}, and other proofs in this case have appeared in \cite{Barles:2001,Aronsson:2004,Crandall:2007}.

Here we give a new, elementary proof of uniqueness under the assumption that $f>0$, $f< 0$, or $f\equiv 0$. Unlike previous proofs, our argument does not use deep viscosity solution theory or probabilistic techniques. Instead, our proof is based on the simple observation that by modifying a solution of the PDE \EQ{infinity-Laplace-PDE} by ``maxing over $\ep$-balls," we obtain a subsolution of the finite difference equation \EQ{intro-FD}, possibly with small error.

It is known that there may exist multiple viscosity solutions of the boundary value problem \EQ{intro-DP} in the case that $f$ changes sign (see \cite[Section 5.4]{Peres:2009}). However, in this article we demonstrate that uniqueness holds for \emph{generic} $f$. That is, for fixed $\tilde{f} \in C(\Omega) \cap L^\infty(\Omega)$ and $g \in \partial \Omega$, we show that the problem \EQ{intro-DP} has a unique solution for $f = \tilde{f} + c$ for all but at most countably many $c\in \R$. See \THM{uniqueness-generic} below. This result provides an affirmative answer to a question posed in \cite{Peres:2009}.

Other new theorems obtained in this paper include a result regarding the stability of solutions of \EQ{intro-DP} and an explicit continuous dependence estimate from $f\equiv 0$.

In \SEC{two}, we review our notation and definitions, describe the tug-of-war games in more detail, state our main results, and give an outline of the rest of the paper.

%%%%%%%%%%%%%%%%%%%%%%%%%%%%%%%%%%%%%%%%%%%%%%%%
%%%%%%%%%%%%%%%%%%%%%%%%%%%%%%%%%%%%%%%%%%%%%%%%
%%%%%%%%%%%%%%%%%%%%%%%%%%%%%%%%%%%%%%%%%%%%%%%%
%%%%%%%%%%%%%%%%%%%%%%%%%%%%%%%%%%%%%%%%%%%%%%%%
%%%%%%%%%%%%%%%%%%%%%%%%%%%%%%%%%%%%%%%%%%%%%%%%
%%%%%%%%%%%%%%%%%%%%%%%%%%%%%%%%%%%%%%%%%%%%%%%%
%%%%%%%%%%%%%%%%%%%%%%%%%%%%%%%%%%%%%%%%%%%%%%%%

\section{Preliminaries and main results}\label{sec:two}

\subsection*{Notation}

Throughout this article, $\Omega$ denotes a bounded, connected, and open subset of $\R^n$, $f$ denotes an element of $C(\Omega)\cap L^\infty(\Omega)$, $g$ denotes an element of $C(\partial \Omega)$, and $\ep$ denotes a small number satisfying $0 < \ep < 1$. At various points we impose additional hypotheses on $\Omega$, $f$, $g$, and $\ep$.

If $x,y \in \R^n$, we denote the usual Euclidean inner product by $\langle x, y \rangle$, and use $|x|$ to denote the Euclidean length of $x$. If $E \subseteq \R^n$, we denote the closure of $E$ by $\bar E$.  The set of upper semicontinuous functions on $\bar\Omega$ is denoted by $\USC(\bar\Omega)$, and likewise the set of lower semicontinuous functions is denoted by $\LSC(\bar\Omega)$. We denote the $n$-by-$n$ identity matrix by $\iden$. If $x\in \R^n$, then $x \otimes x$ denotes the matrix $(x_ix_j)$.

We denote path distance on $\bar \Omega$ by $\path$. That is, $\path(x,y)$ is the infimum of the lengths of all possible Lipschitz paths $\gamma : [0,1] \to \bar \Omega$ with $\gamma(0)= x$ and $\gamma(1)=y$. If $x\in \bar\Omega$ and $E \subseteq \bar \Omega$, we set $\dist(x,E) : = \inf\{ \path(x,y) : y \in E \}$ and define the set $\Omega_\delta := \left\{ x\in \Omega : \dist(x,\partial \Omega) > \delta \right\}$. The open ball with respect path distance with center $x_0 \in \Omega$ and radius $r > 0$ is denoted by
\begin{equation*}
\Omega(x_0, r) : = \left\{ x \in \Omega : d(x,x_0) < r \right\}.
\end{equation*}
We denote by $\diam(\Omega)$ the diameter of $\Omega$ with respect to $d$; i.e.,
\begin{equation*}
\diam(\Omega) = \sup\{ d(x,y) : x,y\in \bar \Omega \}.
\end{equation*}
We require that the boundary $\partial \Omega$ of $\Omega$ is sufficiently regular so that 
\begin{equation*}
\diam(\Omega) <  \infty.
\end{equation*}
The open ball with respect to Euclidean distance is denoted $B(x_0, r)$. We usually refer to $B(x_0, r)$ only when $x_0 \in \Omega_r$, in which case $B(x_0, r) = \Omega(x_0, r)$. It is somewhat inconvenient to work with path distance, but it is needed in \SEC{three} to handle difficulties which appear near the boundary of the domain.

If $K$ is a compact subset of $\bar \Omega$ and $h:K \to \R$ is continuous, we define the \emph{modulus} of $h$ on $K$ by
\begin{equation*}
\omega_h(s) : = \max \left\{ t |h(x) - h(y) | \, : \, t |x-y| \leq s \ \mbox{and} \ t \leq 1 \right\}, \quad s\geq 0.
\end{equation*}
It is easy to check that $|h(x) - h(y)| \leq \omega_h(|x-y|) \leq  \omega_h(\path(x,y))$, and that $\omega_h$ is continuous, nondecreasing, and concave on $[0,\infty)$, and $\omega_h(0) = 0$. In particular,
\begin{equation}\label{eq:omega-g-concave}
\omega_h(ts) \leq t \omega_h(s) \quad \mbox{for every} \ s \geq 0, t\geq 1.
\end{equation}
We call any function $\omega$ with the properties above a \emph{modulus of continuity for} $h$.

\subsection*{The infinity Laplace equation}

We recall the notion of a viscosity solution of the (normalized) infinity Laplace equation. For a $C^2$ function $\varphi$ defined in a neighborhood of $x \in \R^n$, we define the operators
\begin{equation*}
\ILP\varphi(x) : = \begin{cases}
|D\varphi(x)|^{-2} \left\langle  D^2\varphi(x) \cdot D\varphi(x) , D\varphi(x) \right\rangle & \mbox{if} \ D\varphi(x) \neq 0, \\
\max\left\{ \left\langle D^2\varphi(x)v, v\right\rangle : |v| =1 \right\} & \mbox{otherwise},
\end{cases}
\end{equation*}
and
\begin{equation*}
\ILM\varphi(x) : = \begin{cases}
|D\varphi(x)|^{-2} \left\langle  D^2\varphi(x) \cdot D\varphi(x) , D\varphi(x) \right\rangle & \mbox{if} \ D\varphi(x) \neq 0, \\
\min\left\{ \left\langle D^2\varphi(x)v, v\right\rangle : |v| =1 \right\} & \mbox{otherwise}.
\end{cases}
\end{equation*}
Notice that $\ILM\varphi(x) := - \ILP (-\varphi)(x)$. For any $\varphi \in C^2(\Omega)$, the map $x \mapsto \ILP \varphi (x)$ is upper semicontinuous in $\Omega$, while $x \mapsto \ILM\varphi(x)$ is lower semicontinuous, and the two are equal (and hence continuous) on the set $\{ x \in \Omega : D \varphi(x) \neq 0 \}$.

\begin{definition}\label{def:viscosity}
An upper semicontinuous function $u \in \USC(\Omega)$ is a \emph{viscosity subsolution} of the normalized infinity Laplace equation
\begin{equation}\label{eq:defviscsol}
-\Delta_\infty u = f \quad \mbox{in} \ \Omega
\end{equation}
if, for every polynomial $\varphi$ of degree 2 and $x_0\in \Omega$ such that
\begin{equation*}
x\mapsto u(x) - \varphi(x) \quad \mbox{has a strict local maximum at} \ x=x_0,
\end{equation*} 
we have
\begin{equation}\label{eq:defviscsol-sub}
- \ILP \varphi(x_0) \leq f(x_0).
\end{equation}
Likewise, a lower semicontinuous function $u \in \LSC(\Omega)$ is a \emph{viscosity supersolution} of \EQ{defviscsol} if, for every polynomial $\varphi$ of degree 2 and $x_0\in \Omega$ such that
\begin{equation*}
x\mapsto v(x) - \varphi(x) \quad \mbox{has a strict local minimum at} \ x=x_0,
\end{equation*} 
we have
\begin{equation}\label{eq:defviscsol-super}
- \ILM \varphi(x_0) \geq f(x_0).
\end{equation}
We say that $u$ is a \emph{viscosity solution} of \EQ{defviscsol} if $u$ is both a viscosity subsolution and viscosity supersolution of \EQ{defviscsol}.

If we strengthen our definitions of viscosity subsolution/supersolution by requiring \EQ{defviscsol-sub}/\EQ{defviscsol-super} to hold whenever $\varphi \in C^2$ and $u-\varphi$ has a (possibly not strict) local maximum/minimum at $x_0$, then we obtain equivalent definitions.

If $u \in C(\Omega)$ is a viscosity subsolution (supersolution) of \EQ{defviscsol}, then we often write
\begin{equation} \label{eq:defviscsol-pedantic}
-\Delta_\infty u \leq (\geq) f \quad \mbox{in} \ \Omega.
\end{equation}
We emphasize that the differential inequality \EQ{defviscsol-pedantic} is to be understood only in the viscosity sense.

In the case that $f\equiv 0$, subsolutions, supersolutions, and solutions of \EQ{defviscsol} are called \emph{infinity subharmonic}, \emph{infinity superharmonic}, and \emph{infinity harmonic}, respectively.
\end{definition}

\begin{remark}
In this paper, the symbol $-\Delta_\infty$ always denotes the \emph{normalized} or \emph{$1$-homogeneous} infinity Laplacian operator
\begin{equation}\label{eq:normalized-infinity-laplacian}
-\Delta_\infty u := - |Du|^{-2} \left\langle D^2u \cdot Du, Du \right\rangle.
\end{equation}
In the PDE literature, it is more customary to reserve $-\Delta_\infty$ to denote the operator $- \left\langle D^2u \cdot Du, Du \right\rangle$. We break from this convention since the normalized infinity Laplacian operator is more natural from the perspective of tug-of-war games, and is therefore the focus of this article. We also point out that there is no difference between the two resulting equations (in the viscosity sense) when the right-hand side $f\equiv 0$. We henceforth drop the modifier \emph{normalized} and refer to \EQ{normalized-infinity-laplacian} as the \emph{infinity Laplacian} and the equation $-\Delta_\infty u = f$ as the \emph{infinity Laplace equation}.
\end{remark}

\subsection*{Tug-of-war}

Let us briefly review the notion of two-player, zero-sum, random-turn tug-of-war games, which were first introduced by Peres, Schramm, Sheffield, and Wilson \cite{Peres:2009}. Fix a number $\ep > 0$. The dynamics of the game are as follows. A token is placed at an initial position $x_0 \in \Omega$. At the $k$th stage of the game, Player I and Player II select points $x^I_k$ and $x^{II}_k$, respectively, each belonging to a specified set $A(x_{k-1},\ep) \subseteq \bar\Omega$. The game token is then moved to $x_k$, where $x_k$ is chosen randomly so that $x_k = x^I_k$ with probability $P=P(x_{k-1},x^I_k,x^{II}_k)$ and $x_k = x^{II}_k$ with probability $1-P$, where $P$ is a given function. After the $k$th stage of the game, if $x_k \in \Omega$, then the game continues to stage $k+1$. Otherwise, if $x_{k} \in \partial \Omega$, the game ends and Player II pays Player I the amount
\begin{equation}\label{eq:game-payoff}
\mathrm{Payoff} = g(x_{k}) + \frac{\ep}{2} \sum_{j=1}^k q(\ep,x_{j-1},x_j) f(x_{j-1}),
\end{equation} 
where $q$ is a given function. We call $g$ the \emph{terminal payoff function} and $f$ the \emph{running payoff function}. Of course, Player I attempts to maximize the payoff, while Player II attempts to minimize it.

A \emph{strategy} for a Player I is a mapping $\sigma_I$ from the set of all possible partially played games $(x_0,x_1,\ldots,x_{k-1})$ to moves $x^I_k \in A(x_{k-1},\ep)$, and a strategy for Player II is defined in the same way.

Given a strategy $\sigma_I$ for Player I and a strategy $\sigma_{II}$ for Player II, we denote by $F_I(\sigma_I,\sigma_{II})$ and $F_{II}(\sigma_I,\sigma_{II})$ the expected value of the expression \EQ{game-payoff} if the game terminates with probability one, and this expectation is defined in $[-\infty,\infty]$. Otherwise, we set $F_I(\sigma_I,\sigma_{II}) = - \infty$ and $F_{II}(\sigma_I,\sigma_{II}) = + \infty$. (If the players decide to play in a way that makes the probability of the game terminating less than 1, then we penalize both players an infinite amount.)

The \emph{value} of the game for Player I is the quantity $\sup_{\sigma_I} \inf_{\sigma_{II}} F_I(\sigma_I,\sigma_{II})$, where the supremum is taken over all possible strategies for Player I and the infimum over all possible strategies for Player II. It is the minimum amount that Player I should expect to win at the conclusion of the game. Similarly, the \emph{value} of the game for Player II is $\inf_{\sigma_{II}} \sup_{\sigma_I} F_{II}(\sigma_I,\sigma_{II})$, which is the maximum amount that Player II should expect to lose at the conclusion of the game. We denote the value for Player I as a function of the starting point $x \in \Omega$ by $V_I^\ep(x)$, and similarly the value for Player II by $V_{II}^\ep(x)$. We extend the value functions to $\partial \Omega$ by setting $V_I^\ep = V_{II}^\ep = g$ there. It is clear that $V_I^\ep \leq V_{II}^\ep$. The game is said to have a \emph{value} if $V_I^\ep \equiv V_{II}^\ep =:V^\ep$.

The tug-of-war game studied in \cite{Peres:2009}, which in this paper we call \emph{standard $\ep$-step tug-of-war}, is the game described above for\footnote{The game described in \cite{Peres:2009} actually requires the players to select points in the slightly smaller set $A(x,\ep) = \Omega(x,\ep) \cup ( \{ y\in \partial \Omega : d(x,y) < \ep \}$, when the current position of the token is $x$. Technical difficulties arise in some of the probabilistic arguments in \cite{Peres:2009} if the players are allowed to move to points in $\{ y \in \bar \Omega: d(x,y) = \ep\}$. This small difference does not concern us here.}
\begin{equation*}
A(x,\ep) = \bar \Omega(x,\ep), \quad P = \frac{1}{2}, \quad \mbox{and} \ \ q(\ep,x,y) = \ep.
\end{equation*}
In other words, the players must choose points in the $\ep$-ball centered at the current location of the token, a fair coin is tossed to determine where the token is placed, and Player II accumulates a debt to Player I which is increased by $\frac{1}{2}\ep^2 f(x_{k-1})$ after the $k$th stage.

\medskip

According to the dynamic programming principle, the value functions for Player I and Player II for standard $\ep$-turn tug-of-war satisfy the relation
\begin{equation}\label{eq:standard-dynamic-programming}
2V(x) - \left( \sup_{\bar\Omega(x,\ep)} V + \inf_{\bar\Omega(x,\ep)} V \right) = \ep^2 f(x),\quad x\in \Omega.
\end{equation}
If we divide the left side of \EQ{standard-dynamic-programming} by $\ep^2$, we have a good approximation to the negative of the second derivative of $u$ in the direction of $Du(x)$, provided that $u$ is smooth, $Du(x) \neq 0$, and $\dist (x,\partial \Omega) \geq \ep$ (indeed, see \LEM{discrete-to-continuous} below). Thus we might expect that in the limit as $\ep \to 0$, the value functions for Players I and II converge to a solution of the boundary-value problem
\begin{equation}\label{eq:Peres2009}
\left\{ \begin{aligned}
& - \Delta_\infty u = f & \mbox{in} & \ \Omega,\\
& u = g & \mbox{on} & \ \partial \Omega.
\end{aligned} \right.
\end{equation}
This is indeed the case for certain running payoff functions $f$, as was shown in \cite{Peres:2009}.

\begin{thm}[Peres, Shramm, Sheffield, Wilson \cite{Peres:2009}]\label{thm:Peres2009}
Assume that $f \in C(\bar\Omega)$ and
\begin{equation}\label{eq:Peres2009hyp}
f \equiv 0 \ \ \mbox{or} \ \  \min_{\bar \Omega} f > 0 \ \ \mbox{or} \ \ \max_{\bar \Omega} f < 0.
\end{equation}
Then the boundary value problem \EQ{Peres2009} has a unique viscosity solution $u \in C(\bar\Omega)$, and for every $\ep > 0$, standard $\ep$-step tug-of-war possesses a value $V^\ep$. Moreover, $V^\ep \rightarrow u$ uniformly in $\bar \Omega$ as $\ep \to 0$. 
\end{thm}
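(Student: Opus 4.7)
The plan is to establish, in sequence: (i) a comparison principle and uniqueness for viscosity solutions of $-\Delta_\infty u = f$ under \EQ{Peres2009hyp}; (ii) existence of a unique $u \in C(\bar\Omega)$ solving \EQ{Peres2009}; (iii) existence of the game value $V^\ep$ for each fixed $\ep > 0$; and (iv) uniform convergence $V^\ep \to u$. The unifying device is the dynamic programming identity \EQ{standard-dynamic-programming} and its formal resemblance to the PDE.

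For comparison, I would use the sup-convolution trick alluded to in the introduction. Given $u \in \USC(\bar\Omega)$ a subsolution and $v \in \LSC(\bar\Omega)$ a supersolution of the PDE with $u \leq v$ on $\partial\Omega$, set $u^\delta(x) := \sup_{y \in \bar B(x,\delta)} u(y)$ and $v_\delta(x) := \inf_{y \in \bar B(x,\delta)} v(y)$. A test-function computation, relying on the second-order Taylor expansion of the PDE identity $-\Delta_\infty u = f$ in the direction that realizes the sup, yields the discrete subsolution inequality
\begin{equation*}
2 u^\delta(x) - \sup_{\bar B(x,\delta)} u^\delta - \inf_{\bar B(x,\delta)} u^\delta \leq \delta^2 f(x) + o(\delta^2)
\end{equation*}
on $\Omega_\delta$, and dually for $v_\delta$. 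Comparison for the discrete inequality is almost free: at an interior maximum $x_0$ of $u^\delta - v_\delta - \eta|x-x_0|^2$, both the sup and the inf over $\bar B(x_0,\delta)$ of $u^\delta - v_\delta$ collapse to the central value, reducing the inequality to $0 \leq o(\delta^2)$, which is contradicted by $\min f > 0$, $\max f < 0$, or by the $\eta$-perturbation when $f \equiv 0$. Sending $\delta, \eta \to 0$ and using upper/lower semicontinuous envelopes then yields $u \leq v$ in $\bar\Omega$.

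Existence of $u$ then follows by Perron's method, using explicit radial barriers matching $g$ on $\partial\Omega$ --- cones in the case $f \equiv 0$ and radial ODE solutions in the constant-sign cases --- to control the upper and lower Perron envelopes. For the game value, I would interpret \EQ{standard-dynamic-programming} as a fixed-point equation for the operator
\begin{equation*}
(TV)(x) := \tfrac{1}{2}\Bigl( \sup_{\bar\Omega(x,\ep)} V + \inf_{\bar\Omega(x,\ep)} V \Bigr) - \tfrac{1}{2}\ep^2 f(x), \qquad V = g \ \text{on}\ \partial \Omega,
\end{equation*}
and obtain maximal and minimal fixed points by monotone iteration of $T$ from large and small constants. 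A standard optional-stopping and dynamic programming argument identifies these fixed points with $V_{II}^\ep$ and $V_I^\ep$ respectively; the sign hypothesis on $f$ (or a pull-toward-a-fixed-boundary-point strategy when $f \equiv 0$) forces the game to terminate almost surely, so the penalty value $-\infty/+\infty$ is never attained. Uniqueness of the fixed point --- and hence existence of $V^\ep$ --- follows from a discrete comparison principle for \EQ{standard-dynamic-programming} with the same structure as in the previous paragraph, now applied directly to the discrete equation without any sup-convolution.

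Finally, for the convergence $V^\ep \to u$ uniformly, I would pass to the half-relaxed upper and lower limits of the family $\{V^\ep\}$ as $\ep \to 0$ and apply a Taylor expansion in \EQ{standard-dynamic-programming} at a test function touching such a limit from above or below; at points of nonvanishing gradient this recovers $-\Delta_\infty$, while points of vanishing gradient are handled using the $\max / \min$-over-unit-directions formulas defining $\ILP$ and $\ILM$. This shows that the upper and lower half-relaxed limits are a subsolution and supersolution of \EQ{Peres2009}, with correct boundary values produced by sandwiching against the same radial barriers used in Perron's method. Comparison from step (i) forces these two limits to coincide with $u$, and since $\bar\Omega$ is compact and $u$ is continuous, pointwise convergence upgrades to uniform convergence. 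The principal technical obstacle is the degenerate case $f \equiv 0$: without strictness in the PDE, the whole argument depends on the Jensen-style perturbation by $\eta|x-x_0|^2$ in step (i) and on careful tracking of the $o(\delta^2)$ error in the sup-convolution inequality --- this is precisely where the finite-difference viewpoint is expected to pay off.
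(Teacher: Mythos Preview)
This theorem is \emph{not} proved in the paper; it is quoted from \cite{Peres:2009} as background, and the original proof there is probabilistic. The paper's own contribution is to develop an analytic substitute for a \emph{modified} game (boundary-biased tug-of-war), precisely because the value functions of standard $\ep$-step tug-of-war are discontinuous (see the example immediately following the theorem statement). So there is no ``paper's own proof'' to compare against; what can be compared is your outline against the analytic machinery the paper builds for its variant.

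Your outline is broadly in the spirit of the paper's program --- sup-convolution to pass from continuum to discrete (\PROP{continuous-sup-convolution}), a discrete comparison principle (\THM{no-extrema-comparison}, \COR{comparison}), Perron's method for existence (\THM{existence}), and a convergence argument via test functions (\THM{convergence}). But two points deserve flagging. First, your comparison argument in the case $f\equiv 0$ via the perturbation $\eta|x-x_0|^2$ does not go through as written: adding a quadratic does not make a subsolution of $-\Delta_\infty u \leq 0$ into a \emph{strict} subsolution (the infinity Laplacian is degenerate, and $-\Delta_\infty(\eta|x|^2) = -2\eta$ is the wrong sign), so ``reducing the inequality to $0 \leq o(\delta^2)$'' followed by a contradiction does not materialize. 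The paper's route for $f\equiv 0$ (\THM{Jensen}) instead uses the structural fact that a discrete subsolution with nonpositive right-hand side has no strict $\ep$-local maximum (\LEM{no-extrema-no-extrema}), which feeds into \PROP{no-extrema-comparison-ep-thick}; for $f>0$ it uses the multiplicative scaling $w=(1+a^{-1}\gamma)v$ (\PROP{dep-pos}) rather than an additive quadratic. Second, for standard tug-of-war the value functions are discontinuous, so your monotone-iteration/fixed-point and half-relaxed-limit steps would require working in a class of discontinuous functions and controlling boundary behaviour there --- this is exactly the difficulty the paper sidesteps by changing the game near $\partial\Omega$. Your plan, as written, would prove the analogue for the boundary-biased game rather than the cited result for the standard one.
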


One of the goals for this present work is to develop PDE methods for tug-of-war games and the infinity Laplace equation. Some of the difficulties in the analysis of standard $\ep$-step tug-of-war are due to the discontinuity of the value functions. To understand this phenomena, we study the following simple example.

\begin{example}
Let $\ep = 1/k$ for some positive integer $k\geq 2$, and consider standard $\ep$-step tug-of-war played on the unit interval $\Omega = (0,1)$, with vanishing running payoff function, and terminal payoff function given by and $g(0) = 0$ and $g(1) = 1$. It is easy to see that the value function $V_k$ must be constant on the intervals $I_j := ( (j-1)/k, j / k )$ for each $j = 1,\ldots, k$. Denote its value on the inteval $I_j$ by $v_j$, and write $v = (v_1,\ldots,v_k)$. The dynamic programming relation \EQ{standard-dynamic-programming} now yields the system
\begin{equation}\label{eq:stupid-example}
\left\{ \begin{aligned}
0 & = 2v_1 - v_2, & \\
0 & = -v_j + 2v_{j+1} - v_{j+2} & (2 \leq j \leq k-1), \\
1 & = -v_{k-1} + 2v_k.
\end{aligned}\right.
\end{equation}
The system \EQ{stupid-example} has the unique solution $v_j = (j+1) / (k+1)$ for $j=1,\ldots k$. Thus the value function for this standard tug-of-war game is a step function which approximates the continuum solution, given by $V(x) = x$.

This one dimensional example can be lifted into higher dimensions by considering $\Omega = B(0,2) \setminus \bar B(0,1)$ and setting the terminal payoff function to $1$ on $\partial B(0,2)$ and to $0$ on $\partial B(0,1)$. It is clear that the value function for standard $1/k$-step tug-of-war is then $V_k(|x|-1)$ for $1 < |x| < 2$.

\end{example}

\subsection*{Boundary-biased tug-of-war}

In this article, we study a slight variant of standard $\ep$-turn tug-of-war, which we call \emph{boundary-biased $\ep$-step tug-of-war}. This is the game described in the previous section, where we set
\begin{equation*}
A(x,\ep) = \bar\Omega(x,\ep), \quad P(x,y,z) =  \frac{\rho_\ep(x,z)}{\rho_\ep(x,y)+\rho_\ep(x,z)}, \quad \mbox{and} \ \ q(\ep,x,y) = \rho_\ep(x,y),
\end{equation*}
where
\begin{equation*}
\distep{\ep}{x}{y} := \begin{cases}
\max\{ d(x,y), \ep \} & \mbox{if} \ x, y \in \Omega, \\
d(x,y) & \mbox{if} \ x \in \partial \Omega \ \mbox{or} \ y \in \partial \Omega.
\end{cases} 
\end{equation*}
The dynamics of the boundary-biased game and the accumulation of the running payoff are no different from that of the standard game while the token lies in the set $\Omega_\ep$, as $q(\ep,x_k,x_{k+1}) = \rho_\ep(x_k,x_k^I) = \rho^\ep(x_k,x^{II}_k) = \ep$ if $x_k \in \Omega_\ep$. The distinction between the games occurs near the boundary, where the boundary-biased game gives a player who wishes to terminate the game by exiting at a nearby boundary point a larger probability of winning the coin toss, if the other player selects a point in the domain $\Omega$ or a boundary point further away. The payoff has also been altered slightly from the standard game, so that small jumps to the boundary do not accrue as much running payoff.

Boundary-biased $\ep$-step tug-of-war is indeed only a slight variant of standard $\ep$-step tug-of-war. In fact, by combining results in this paper with those in \cite{Peres:2009}, we can show that the value functions for the two games differ by $O(\ep)$. 

Our purpose for considering boundary-biased tug-of-war in this article is precisely because the value functions are more regular, as we see below. In particular, they are continuous, and uniformly bounded and equicontinuous along sequences $\ep_j \downarrow 0$. These properties allow us to adapt techniques from viscosity solution theory.

The analogue of \EQ{standard-dynamic-programming} for boundary-biased $\ep$-step tug-of-war, derived from the dynamic programming principle, is the equation
\begin{equation}\label{eq:dynamic-prog-bb}
\sup_{y\in \bar\Omega(x,\ep)} \frac{V(x) - V(y)}{\rho_\ep(x,y)} - \sup_{y\in \bar\Omega(x,\ep)} \frac{V(y) - V(x)}{\rho_\ep(x,y)} = \ep f(x).
\end{equation}
Let us introduce the notation
\begin{equation*}
S^+_\ep u(x) := \sup_{y\in \bar\Omega(x,\ep)} \frac{u(y) - u(x)}{\rho_\ep(x,y)} \quad \mbox{and} \quad S^-_\ep u(x) := \sup_{y\in \bar\Omega(x,\ep)} \frac{u(x) - u(y)}{\rho_\ep(x,y)},
\end{equation*}
and
\begin{equation}\label{eq:finite-difference-infinity-laplacian}
\Delta_\infty^\ep u(x) := \frac{1}{\ep} \left( S^+_\ep u(x) - S^-_\ep u(x) \right).
\end{equation}
We may write \EQ{dynamic-prog-bb} as
\begin{equation}\label{eq:finite-difference-infinity-laplace}
-\Delta^\ep_\infty u = f \quad \mbox{in} \ \Omega.
\end{equation}
We call the operator $\Delta_\infty^\ep$ the \emph{finite difference infinity Laplacian} and the equation the \EQ{finite-difference-infinity-laplace} the \emph{finite difference infinity Laplace equation}.

\begin{remark}
Let us briefly mention that the value functions for Players I and II are bounded: this is easy to see by adopting a strategy of ``pulling" toward a specified point on the boundary. This strategy forces the game to terminate after at most $C \ep^{-2}$ expected steps (we refer to \cite{Peres:2009} for details).
\end{remark}

Our approach is to study the finite difference equation \EQ{finite-difference-infinity-laplace} directly, using PDE methods. While we use the underlying tug-of-war games for intuition and motivation, we make no probabilistic arguments in this paper (with the exception of the proof of \LEM{probability-martingale}). Several of our analytic techniques are suggested by probabilistic arguments in \cite{Peres:2009}, see for example the discussion preceding \LEM{max-over-balls}.

\subsection*{Main results} Our first theorem establishes comparison for solutions of the finite difference infinity Laplace equation. In order to state this result, we require the following definition.

\begin{definition}
Let $\ep > 0$ and $v:\bar \Omega \to \R$. We say that $x\in \Omega$ is a \emph{strict $\ep$-local maximum} of $v$ if there is a closed set  $F \subseteq \Omega$ with $x \in F$ and $v(x) = \sup_F v > v(y)$ for every $y\in F^\ep \setminus F$, where we denote $F^\ep := \{ x \in \bar \Omega : \dist(x, F) \leq \ep \}$. Similarly, we say that $x\in \Omega$ is a \emph{strict $\ep$-local minimum} of $u$ if $x$ is a strict $\ep$-local maximum of $-u$.
\end{definition}

\begin{thm}
\label{thm:no-extrema-comparison}
Assume that the functions $u, -v \in \USC(\bar\Omega)$ satisfy
\begin{equation}\label{eq:no-extrema-comparison-ineq}
- \Delta_\infty^\ep u \leq - \Delta_\infty^\ep v \quad \mbox{in} \  \Omega.
\end{equation}
Suppose also that $u$ has no strict $\ep$-local maximum, or $v$ has no strict $\ep$-local minimum, in $\Omega$. Then
\begin{equation}\label{eq:comparison-conclusion}
\max_{\bar \Omega} (u-v) = \max_{\partial \Omega} (u-v).
\end{equation}
\end{thm}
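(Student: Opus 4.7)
The plan is to argue by contradiction: assume $M := \max_{\bar\Omega}(u-v) > \max_{\partial\Omega}(u-v)$, so the contact set $F := \{x\in\bar\Omega : u(x)-v(x)=M\}$ is a nonempty compact subset of $\Omega$ (it is nonempty since $u-v \in \USC(\bar\Omega)$ attains its max on the compact set $\bar\Omega$). The substitution $(u,v) \mapsto (-v,-u)$ preserves $u-v$, preserves the inequality \EQ{no-extrema-comparison-ineq} (since $\Delta_\infty^\ep$ is odd: $S^+_\ep(-w) = S^-_\ep w$ and vice versa), and swaps the two alternative hypotheses, so I assume throughout that $u$ has no strict $\ep$-local maximum in $\Omega$. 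The goal is then to exhibit a closed set $F_1 \subseteq \Omega$ and a point $x_0 \in F_1$ witnessing that $x_0$ \emph{is} a strict $\ep$-local maximum of $u$, contradicting the hypothesis.

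The first step is to extract strong rigidity at each point $x_0 \in F$. Since $u(y)-u(x_0) \le v(y)-v(x_0)$ for every $y \in \bar\Omega$, the definitions immediately yield $S^+_\ep u(x_0) \le S^+_\ep v(x_0)$ and $S^-_\ep u(x_0) \ge S^-_\ep v(x_0)$. Combining with the hypothesis $-\Delta_\infty^\ep u(x_0) \le -\Delta_\infty^\ep v(x_0)$, i.e., $S^+_\ep v(x_0) - S^-_\ep v(x_0) \le S^+_\ep u(x_0) - S^-_\ep u(x_0)$, forces both inequalities to be equalities:
\begin{equation*}
S^+_\ep u(x_0) = S^+_\ep v(x_0), \qquad S^-_\ep u(x_0) = S^-_\ep v(x_0).
\end{equation*}
Upper semicontinuity of $u$ on the compact set $\bar\Omega(x_0,\ep)$ produces a maximizer $y^+ \in \bar\Omega(x_0,\ep)$ of $S^+_\ep u(x_0)$, and the chain
\begin{equation*}
S^+_\ep u(x_0) = \frac{u(y^+)-u(x_0)}{\rho_\ep(x_0,y^+)} \le \frac{v(y^+)-v(x_0)}{\rho_\ep(x_0,y^+)} \le S^+_\ep v(x_0) = S^+_\ep u(x_0)
\end{equation*}
is then a chain of equalities, forcing $u(y^+)-v(y^+)=M$, i.e., $y^+ \in F$. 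The same argument shows that in fact \emph{any} $y \in \bar\Omega(x_0,\ep)$ achieving the supremum $S^+_\ep u(x_0)$ must lie in $F$ — this strengthening will be critical.

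The key step is to set $M_u := \max_F u$ and $F_1 := \{x \in F : u(x) = M_u\}$, a nonempty compact subset of $\Omega$, and verify that $F_1$ witnesses a strict $\ep$-local maximum of $u$ at each of its points. For $x \in F_1$, the maximizer $y^+$ just constructed lies in $F$ with $u(y^+) \ge u(x) = M_u = \max_F u$, forcing $u(y^+)=M_u$ and hence $S^+_\ep u(x) = 0$; in particular $u(y) \le M_u$ for every $y \in \bar\Omega(x,\ep)$. Given $y \in F_1^\ep \setminus F_1$, choose $x' \in F_1$ with $d(x',y) \le \ep$, so $y \in \bar\Omega(x',\ep)$ and $u(y) \le M_u$. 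If equality held, $y$ would itself achieve the supremum $S^+_\ep u(x')=0$, so $y \in F$ by the italicized observation, and then $u(y)=M_u$ would force $y \in F_1$, contradicting $y \notin F_1$. Hence $u(y) < M_u$, so $F_1$ and any $x_0 \in F_1$ witness a strict $\ep$-local maximum of $u$, contradicting the standing hypothesis.

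The main obstacle is the strengthening that \emph{every} (not merely some canonical) maximizer of $S^+_\ep u(x_0)$ for $x_0 \in F$ lies in $F$; without it, one could not rule out a point $y \in F_1^\ep \setminus F_1$ achieving $u(y)=M_u$ via a non-canonical maximizer. Fortunately the equality-in-a-chain argument applies to any maximizer. A minor subtlety is that $F_1^\ep$ may extend into $\partial\Omega$ even though $F_1 \subseteq \Omega$, but this causes no difficulty because $\bar\Omega(x',\ep)$ already includes boundary points and the pointwise bound $u - v \le M$ holds throughout $\bar\Omega$. The symmetric case in which $v$ has no strict $\ep$-local minimum is handled identically using $F_2 := \{x \in F : v(x) = \min_F v\}$ and the companion identity $S^-_\ep v \equiv 0$ on $F_2$.
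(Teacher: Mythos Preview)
Your proof is correct and follows essentially the same approach as the paper: both argue by contradiction, establish the equalities $S^\pm_\ep u = S^\pm_\ep v$ on the contact set, restrict to the subset where $u$ attains its maximum over that contact set, and use the observation that any point realizing $S^+_\ep u(x)$ must itself lie in the contact set to conclude that this subset witnesses a strict $\ep$-local maximum of $u$. Your presentation makes the key step (``every maximizer of $S^+_\ep u(x_0)$ for $x_0$ in the contact set lies in the contact set'') more explicit than the paper's, which folds it into a ``reselecting $y$'' maneuver, but the argument is the same.
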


We show in \LEM{no-extrema-no-extrema} that $u \in \USC(\bar\Omega)$ and $- \Delta_\infty^\ep u \leq 0$ in $\Omega$ imply that $u$ has no strict $\ep$-local maximum in $\Omega$. By symmetry, we deduce that $v \in \LSC(\bar\Omega)$ and $- \Delta_\infty^\ep v \geq 0$ in $\Omega$ imply that $v$ has no strict $\ep$-local minimum in $\Omega$. From these observations we immediately obtain the following corollary.

\begin{cor} \label{cor:comparison}
Assume that $u,-v\in \USC(\bar\Omega)$ satisfy the inequality
\begin{equation}
\label{eq:comparison-hyp}
-\Delta^\ep_\infty u \leq -\Delta^\ep_\infty v \quad \mbox{in} \ \Omega.
\end{equation}
Suppose also that $-\Delta_\infty^\ep u \leq 0$ or $-\Delta_\infty^\ep v \geq 0$ in $\Omega$. Then \EQ{comparison-conclusion} holds.
\end{cor}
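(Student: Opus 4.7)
The corollary should follow immediately from \THM{no-extrema-comparison} once one of its hypotheses (absence of a strict $\ep$-local extremum for $u$ or $v$) is verified. The preceding \LEM{no-extrema-no-extrema}, quoted in the excerpt, supplies precisely this implication from a one-sided sign condition on $-\Delta_\infty^\ep$, so my plan is to apply it in each of the two alternatives of the corollary and then invoke the theorem.

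Concretely, the definitions of $S^+_\ep$ and $S^-_\ep$ give the odd symmetry $\Delta_\infty^\ep(-w) = -\Delta_\infty^\ep w$, since $S^+_\ep(-w) = S^-_\ep w$ and $S^-_\ep(-w) = S^+_\ep w$. If $-\Delta_\infty^\ep u \leq 0$ in $\Omega$, applying \LEM{no-extrema-no-extrema} to $u \in \USC(\bar\Omega)$ rules out a strict $\ep$-local maximum of $u$. If instead $-\Delta_\infty^\ep v \geq 0$, the odd symmetry turns this into $-\Delta_\infty^\ep(-v) \leq 0$, and since $-v \in \USC(\bar\Omega)$ by hypothesis, the same lemma applied to $-v$ rules out a strict $\ep$-local maximum of $-v$, that is, a strict $\ep$-local minimum of $v$. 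In either case one hypothesis of \THM{no-extrema-comparison} is met, and together with \EQ{comparison-hyp} the theorem delivers \EQ{comparison-conclusion}.

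The substantive step is \LEM{no-extrema-no-extrema} itself, which I would prove by contradiction. Suppose $u \in \USC(\bar\Omega)$ has a strict $\ep$-local maximum at some $x$ through a closed set $F \subseteq \Omega$, with $M := u(x) = \max_F u$. Consider the nonempty, compact set $F_0 := \{y \in F : u(y) = M\}$, which is closed by upper semicontinuity of $u$. Since $\Omega$ is open and connected while $F_0$ is a proper nonempty closed subset of $\Omega$, a standard connectedness argument yields a topological boundary point $x^* \in F_0 \subseteq \Omega$; a small Euclidean ball about $x^*$ (inside $\Omega$, where Euclidean and path distance coincide) produces some $y \in \bar\Omega(x^*, \ep) \setminus F_0$. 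Because $\dist(y, F) \leq d(y, x^*) \leq \ep$, such a $y$ lies in $F^\ep$, and the defining inequality of a strict $\ep$-local maximum (combined with $F_0 \subseteq F$) forces $u(y) < M = u(x^*)$. Hence $S^-_\ep u(x^*) > 0$, while $u \leq M$ throughout $\bar\Omega(x^*, \ep) \subseteq F^\ep$ gives $S^+_\ep u(x^*) \leq 0$, so $-\Delta_\infty^\ep u(x^*) > 0$, contradicting the hypothesis.

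The step that will require the most care is this final topological one: extracting a point $x^* \in F_0$ at which the $\ep$-neighborhood genuinely escapes $F_0$ in the path-distance metric used to define $\bar\Omega(\cdot,\ep)$ and $F^\ep$. Since $x^*$ lies in the open set $\Omega$, path and Euclidean distance agree on a small ball around $x^*$, so escape in either metric amounts to the same thing and ordinary connectedness of $\Omega$ suffices; no subtlety from $\partial\Omega$ intrudes.
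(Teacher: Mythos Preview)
Your proposal is correct and follows exactly the paper's approach: apply \LEM{no-extrema-no-extrema} (or its symmetric version for $-v$) to verify the extremum hypothesis of \THM{no-extrema-comparison}, then invoke the theorem. Your sketch of \LEM{no-extrema-no-extrema} also matches the paper's proof, which likewise passes to the set $E = \{y \in F : u(y) = u(x_0)\}$ and derives the contradiction $S^+_\ep u = 0 < S^-_\ep u$ at a boundary point of $E$.
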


Our next main result establishes the existence of solutions. In fact, we show that the Dirichlet problem for the finite difference possesses unique maximal and minimal solutions, which are the value functions for Players II and I, respectively, for boundary-biased tug-of-war.

\begin{thm}\label{thm:existence}
For each $\ep > 0$, there exist solutions $\underline u_\ep, \bar u_\ep \in C(\bar\Omega)$ of the equation
\begin{equation}\label{eq:existence}
\left\{   \begin{aligned}
& -\Delta^\ep_\infty u = f & & \mbox{in} \ \Omega, \\
& u = g & & \mbox{on} \ \partial \Omega,
\end{aligned} \right.
\end{equation}
with the property that if $w:\bar\Omega \to \R$ is any bounded function satisfying the inequalities
\begin{equation}
\left\{   \begin{aligned}
&-\Delta_\infty^\ep w \leq (\geq) \ f & & \mbox{in} \ \Omega, \\
& w \leq (\geq) \ g & & \mbox{on} \ \partial \Omega,
\end{aligned} \right.
\end{equation}
then $w \leq \bar u_\ep$ ($w \geq \underline u_\ep$) on $\bar \Omega$. Moreover, $\underline u_\ep$ is the value function for Player I, and $\bar u_\ep$ is the value function for Player II for  the corresponding boundary-biased $\ep$-step tug-of-war game.
\end{thm}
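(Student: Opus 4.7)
The plan is to use a Perron-type construction for the non-local operator $-\Delta_\infty^\ep$ to obtain $\bar u_\ep$ and $\underline u_\ep$, establish continuity up to the boundary using barriers that exploit the modified distance $\distep{\ep}{\cdot}{\cdot}$, and finally identify the extremal solutions with the tug-of-war value functions via the dynamic programming principle combined with a martingale/strategy argument.

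First, I would set
\[
\bar u_\ep(x) := \sup\bigl\{w(x) : w:\bar\Omega\to\R \text{ bounded}, \ -\Delta_\infty^\ep w \leq f \text{ in } \Omega, \ w \leq g \text{ on } \partial\Omega\bigr\}
\]
and define $\underline u_\ep$ symmetrically. The admissible class is non-empty (take a sufficiently negative constant plus a tall multiple of $\path(\cdot,x_0)$ for a fixed $x_0\in\partial\Omega$), and \COR{comparison}, applied against any explicit bounded supersolution built from $\|f\|_\infty$ and $\diam(\Omega)$, yields a uniform upper bound. Because $-\Delta_\infty^\ep w(x)$ depends on $w$ only through its values on $\bar\Omega(x,\ep)$ and both $S^+_\ep$ and $-S^-_\ep$ are monotone under pointwise suprema, $\bar u_\ep$ inherits the subsolution inequality. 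A standard Perron perturbation then forces $-\Delta_\infty^\ep \bar u_\ep \geq f$ in $\Omega$: if this failed at some $x_0 \in \Omega$, a local upward bump of $\bar u_\ep$ on $\bar\Omega(x_0,\ep)$ would yield a strictly larger admissible competitor, contradicting maximality.

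Continuity of $\bar u_\ep$ on $\bar\Omega$ is established via barriers, and this is where the boundary-biased definition of $\distep{\ep}{\cdot}{\cdot}$ is essential. For each $x_0 \in \partial\Omega$ I would produce upper and lower barriers of the form
\[
w^\pm(x) = g(x_0) \pm \bigl[\omega_g(\path(x,x_0)) + C(\|f\|_\infty+1)\,\path(x,x_0)\bigr].
\]
The crucial point is that when $x\in\Omega$ lies within $\ep$ of $\partial\Omega$, the supremum defining $S^\pm_\ep w^\pm(x)$ is (nearly) attained at a point $y\in\partial\Omega$, for which $\distep{\ep}{x}{y}=\path(x,y)$ is \emph{not} floored at $\ep$. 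This kills the $O(\ep)$-sized boundary layer responsible for the jump in the Example and makes cone-type barriers admissible supersolutions and subsolutions of \EQ{finite-difference-infinity-laplace} with constants depending only on $\omega_g$, $\|f\|_\infty$, and $\diam(\Omega)$. \COR{comparison} then squeezes $\bar u_\ep$ between $w^-$ and $w^+$ at $x_0$, giving equicontinuity up to the boundary; interior continuity then propagates one $\ep$-ball at a time from the finite-range dependence of the operator.

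Finally, for the identification with the value functions: the dynamic programming principle asserts that $V_I^\ep$ and $V_{II}^\ep$ satisfy \EQ{dynamic-prog-bb}, hence are solutions of \EQ{existence}, so the extremal property of $\bar u_\ep$ and $\underline u_\ep$ gives $\underline u_\ep \leq V_I^\ep \leq V_{II}^\ep \leq \bar u_\ep$. For the matching reverse inequality, Player II plays the strategy ``move to a point of $\bar\Omega(x_k,\ep)$ achieving $S^+_\ep \bar u_\ep(x_k)$ up to error $\eta_k$ with $\sum \eta_k < \infty$''; an elementary computation with the transition probability $P$ and the increment $q(\ep,x,y)=\distep{\ep}{x}{y}$ shows that $\bar u_\ep(x_k)$ plus the accumulated running payoff is a supermartingale up to a summable error. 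The pulling strategy mentioned in the remark preceding the main-results subsection forces almost-sure termination in $O(\ep^{-2})$ expected steps, so optional stopping yields $V_{II}^\ep(x_0)\leq \bar u_\ep(x_0)$; the argument for $V_I^\ep\geq\underline u_\ep$ is symmetric. The principal obstacle is the barrier construction: the Example shows that without the $\distep{\ep}{\cdot}{\cdot}$ modification the value functions genuinely develop $\Theta(\ep)$ jumps at $\partial\Omega$, so the proof must use the boundary-biased structure in an essential quantitative way. Once the boundary behaviour is controlled, Perron's method and the martingale identification are routine.
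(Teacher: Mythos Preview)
Your overall plan---Perron construction for $\bar u_\ep$, cone-type barriers for boundary continuity, and a martingale/strategy argument for the identification with value functions---is exactly the paper's approach. Two details need correction, however.

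First, the barrier $w^+(x) = g(x_0) + \omega_g(\path(x,x_0)) + C(\|f\|_\infty+1)\,\path(x,x_0)$ is not a supersolution of $-\Delta_\infty^\ep w^+ \geq f$ when $f$ is allowed to be positive: the linear term is annihilated by $-\Delta_\infty^\ep$, and the concavity of $\omega_g$ only yields $-\Delta_\infty^\ep w^+ \geq 0$. The paper's \LEM{cone-super} instead uses $q(r) = a + br - \tfrac{c}{2}r^2$ with $c = \|f\|_{L^\infty(\Omega)}$, together with a downward boundary jump $-\delta$, to produce $-\Delta_\infty^\ep \varphi \geq \min\{c, \ep^{-1}(8\delta c)^{1/2}\}$. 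The quadratic term is what buys the right-hand side; the jump $\delta$ is what exploits the boundary-biased $\rho_\ep$ in the way you describe.

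Second, your martingale identification is reversed. You already obtained $V_{II}^\ep \leq \bar u_\ep$ from Perron maximality applied to the solution $V_{II}^\ep$; the inequality you still need is $\bar u_\ep \leq V_{II}^\ep$. The paper's \LEM{probability-martingale} fixes a strategy for Player~\emph{I}---pull toward a point realizing $S^+_\ep \bar u_\ep$---and shows that, since $\bar u_\ep$ is a \emph{sub}solution, the process $\bar u_\ep(x_k)$ plus accumulated running payoff is a \emph{sub}martingale regardless of Player~II's play. Optional stopping then yields $\bar u_\ep(x_0) \leq \E[\text{payoff}]$ for every Player~II strategy, hence $\bar u_\ep(x_0) \leq V_{II}^\ep(x_0)$. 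Your version has Player~II moving toward the maximum of $\bar u_\ep$, claims a supermartingale, and concludes $V_{II}^\ep(x_0) \leq \bar u_\ep(x_0)$---which is the inequality you already had. As written, the identification does not close.
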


It is not known if standard $\ep$-turn tug-of-war possesses a value if $f \geq 0$, $f\not \equiv 0$, and $\inf f = 0$, or if $f$ fails to be uniformly continuous. In contrast, according to \COR{comparison} and \THM{existence}, if $f\in C(\Omega) \cap L^\infty(\Omega)$ is nonnegative or nonpositive, then the problem \EQ{existence} has a unique solution $\underline u_\ep = \bar u_\ep$, which is the value of the corresponding boundary-biased $\ep$-step tug-of-war game. \THM{no-extrema-comparison} provides uniqueness in even greater generality: if $\underline u_\ep \not \equiv \bar u_\ep$, then $\underline u_\ep$ has a strict $\ep$-local minimum and $\bar u_\ep$ has a strict $\ep$-local maximum.

This latter result has an interesting probability interpretation. In \cite{Peres:2009}, it was shown that nonuniqueness of solutions may arise from the necessity of guaranteeing termination of the game. If Player I must select his strategy first, then he must ensure that the game terminates after finitely many steps with probability 1, and likewise if Player II chooses her strategy first, she must ensure termination. In certain cases, the player selecting first may be required to adopt a strategy which gives up favorable positions in order to ensure termination of the game. One might suspect that unless there is a good reason for each player to keep the token away from the boundary (e.g., the value functions have a strict $\ep$-local maximum or minimum), then the previous situation does not arise, since one of the players would ensure termination simply by playing optimally. In the latter case, we expect the value functions for the two players to be equal. \THM{no-extrema-comparison} is a formal justification of this intuition.

One might also suspect that if the terminal payoff function $g$ has large oscillation relative to $\| f \|_{L^\infty(\Omega)}$, then the players should aim for a favorable spot on the boundary rather than concern themselves with accumulating running payoff. Thus perhaps in this case the value functions for the players have no strict $\ep$-local extrema, and hence the game has a value. As a further application of \THM{no-extrema-comparison}, we obtain the following uniqueness result for sign-changing but small $f$ and nonconstant $g$, which rigorously justifies to this informal heuristic.

\begin{thm}\label{thm:small-f-uniqueness}
Assume that $\Omega$ is convex and for each $x_0 \in \partial \Omega$ and $r > 0$, the function $g$ is not constant on $\partial \Omega \cap B(x_0,r)$. For each $\ep > 0$ there exists a constant $\delta > 0$ such that if $\| f \|_{L^\infty(\Omega)} \leq \delta$, then the boundary-value problem \EQ{existence} has a unique solution.
\end{thm}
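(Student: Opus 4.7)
The plan is to argue by contradiction using \THM{no-extrema-comparison}. If $\underline u_\ep \not\equiv \bar u_\ep$ then either $\bar u_\ep$ has a strict $\ep$-local maximum or $\underline u_\ep$ has a strict $\ep$-local minimum in $\Omega$. Replacing $(u,f,g)$ by $(-u,-f,-g)$ preserves the hypotheses and interchanges these two cases, so it is enough to rule out a strict $\ep$-local maximum of $\bar u_\ep$ whenever $\|f\|_{L^\infty(\Omega)} \leq \delta$, for a suitable $\delta = \delta(\ep,\Omega,g) > 0$.

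Suppose such a maximum occurs at $x_0 \in F \subseteq \Omega$ with $M := \bar u_\ep(x_0) = \sup_F \bar u_\ep$ and $\bar u_\ep < M$ on $F^\ep \setminus F$. The starting local estimate is immediate: since $\bar\Omega(x_0,\ep) \subseteq F^\ep$, we have $\bar u_\ep \leq M$ on $\bar\Omega(x_0,\ep)$, hence $S_\ep^+ \bar u_\ep(x_0) \leq 0$; plugging into the dynamic programming identity \EQ{dynamic-prog-bb} yields $S_\ep^- \bar u_\ep(x_0) \leq \ep\|f\|_{L^\infty(\Omega)}$, so $\bar u_\ep \geq M - \ep^2\|f\|_{L^\infty(\Omega)}$ on $\bar\Omega(x_0,\ep)$. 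Iterating along $\ep$-step chains that remain in $F$ (so the same bound $\bar u_\ep \leq M$ is available in each traversed ball), a point of gap $a := M - \bar u_\ep$ satisfies $S_\ep^+ \bar u_\ep \leq a/\ep$, which propagates the gap to at most $2a + \ep^2\|f\|_{L^\infty(\Omega)}$ in one step. After the at-most $N := \lceil \diam(\Omega)/\ep \rceil$ steps needed to traverse a straight segment in the convex set $\Omega$, this yields $\bar u_\ep \geq M - C\|f\|_{L^\infty(\Omega)}$ on a ball $B(y_b,r) \cap \bar\Omega$ for some $y_b \in \partial\Omega$ and $r = r(\Omega,\ep) > 0$, with $C = C(\Omega,\ep)$.

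Combined with $\bar u_\ep \leq M$, this caps $\operatorname{osc}_{\partial\Omega \cap B(y_b,r)} g \leq C\|f\|_{L^\infty(\Omega)}$. The non-local-constancy hypothesis, together with compactness of $\partial\Omega$ and lower semicontinuity of $y \mapsto \operatorname{osc}_{\partial\Omega \cap B(y,r)} g$, supplies a positive modulus $\omega(r) := \inf_{y\in\partial\Omega} \operatorname{osc}_{\partial\Omega \cap B(y,r)} g > 0$; the choice $\delta := \omega(r)/(2C)$ then produces the required contradiction. The main obstacle is carrying out the propagation when $M$ is only a local, not a global, maximum of $\bar u_\ep$: the inequality $\bar u_\ep \leq M$ which underlies the bound on $S_\ep^+$ only holds within $F^\ep$, which may be far from $\partial\Omega$. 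To handle this I plan to enlarge $F$ maximally — subject to staying at distance greater than $\ep$ from the higher-level set $\{\bar u_\ep > M\}$ — and to use convexity of $\Omega$ to argue that such a maximal $F$ can be chosen so that $F^\ep$ contains a boundary arc to which the oscillation estimate may be transferred.
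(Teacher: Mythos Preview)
Your propagation step has a genuine gap. The inequality $S_\ep^+ \bar u_\ep(x) \leq a/\ep$ (with $a = M - \bar u_\ep(x)$) requires $\bar u_\ep \leq M$ on $\bar\Omega(x,\ep)$, and the strict $\ep$-local maximum only supplies this on $F^\ep$. But nothing forces $F$ to be larger than $\{x_0\}$, so \emph{a priori} you cannot take even a single step of your chain outside $\bar\Omega(x_0,\ep)$ while retaining the upper bound. Your proposed fix---enlarging $F$ to the set of points at distance $>\ep$ from $\{\bar u_\ep > M\}$---need not reach $\partial\Omega$: the super-level set $\{\bar u_\ep > M\}$ could form an annulus around $x_0$, trapping the enlarged $F$ in a compact region of $\Omega$. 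Convexity of $\Omega$ does not rule this out, since sub-level sets of $\bar u_\ep$ have no reason to be convex. The enlarged set also need not satisfy the strict drop condition on its $\ep$-collar, so it may not even witness a strict $\ep$-local maximum.

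The paper avoids this difficulty by a compactness argument rather than a direct quantitative one. Assuming the theorem fails, one takes a sequence $\delta_j \downarrow 0$, $\|f_j\|_{L^\infty} \leq \delta_j$, and solutions $u_j$ with strict $\ep$-local maxima at $x_j \to x_0$. The half-relaxed limits of $u_j$ coincide (by \COR{comparison} applied with $f=0$) and yield a solution $u$ of $-\Delta_\infty^\ep u = 0$ with $u=g$ on $\partial\Omega$ and $S_\ep^+ u(x_0) = 0$. The heart of the proof is then a separate lemma (\LEM{constant-patch}): for the \emph{homogeneous} finite-difference equation on a convex domain, $S_\ep^+ u(x_0)=0$ at a single point of $\bar\Omega_\ep$ forces $u$ to be constant on all of $\bar\Omega$. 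That lemma is where convexity is actually used, via a delicate argument showing that the maximizer and minimizer for $S_\ep^\pm$ at a carefully chosen point must lie on the same ray through a ball center and hence coincide---a contradiction. The constancy of $u$ then contradicts the hypothesis on $g$. Your direct route would need a quantitative substitute for this lemma, which your proposal does not supply.
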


We give two proofs of the following result, which asserts that as $\ep \to 0$ solutions of the finite difference infinity Laplace equation converge to a solution of the continuum infinity Laplace equation. It is an analogue of the last statement in \THM{Peres2009} for the value functions of boundary-biased tug-of-war. Our result is more general, as we impose no assumptions on $f \in C(\Omega)$.

\begin{thm}\label{thm:convergence}
Assume only that $f \in C(\Omega)$, and that $\{ \ep_j \}_{j=1}^\infty$ is a sequence of positive numbers converging to $0$ as $j \to \infty$, such that for each $j$ the function $u_j \in C(\Omega)$ is a viscosity subsolution of the inequality
\begin{equation}\label{eq:convergence}
-\Delta_\infty^{\ep_j} u_j \leq f \quad \mbox{in} \ \Omega_{\ep_j}.
\end{equation}
Suppose also that there exists a function $u \in C(\Omega)$ such that $u_j \to u$ locally uniformly in $\Omega$ as $j\to \infty$. Then $u$ is a viscosity subsolution of the inequality
\begin{equation} \label{eq:convergence-viscosity}
-\IL u \leq f \quad \mbox{in} \ \ \Omega.
\end{equation}
\end{thm}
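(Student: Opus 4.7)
The strategy is the classical viscosity-solution stability argument, adapted to the nonlocal operator $\Delta_\infty^{\ep}$ and to the upper-semicontinuous envelope $\Delta_\infty^+$ appearing at critical points of the test function. Fix a degree-2 polynomial $\varphi$ and a point $x_0 \in \Omega$ at which $u - \varphi$ has a strict local maximum; the goal is to establish $-\Delta_\infty^+\varphi(x_0) \leq f(x_0)$. By strictness and local uniform convergence $u_j \to u$, a routine compactness argument produces points $x_j \to x_0$ at which $u_j - \varphi$ attains its maximum over some fixed closed ball $\bar B(x_0,r) \subset \Omega$. For all $j$ sufficiently large, $x_j \in \Omega_{\ep_j}$ and $\bar\Omega(x_j,\ep_j) = \bar B(x_j,\ep_j) \subset B(x_0,r)$, so the hypothesis \EQ{convergence} applies pointwise at $x_j$.

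Next I exploit the local maximality to transfer the inequality from $u_j$ to $\varphi$. For $y \in \bar B(x_j,\ep_j)$ one has $u_j(y) - u_j(x_j) \leq \varphi(y) - \varphi(x_j)$, and since $\rho_{\ep_j}(x_j,y) = \ep_j$ throughout this ball, dividing and taking suprema yields $S^+_{\ep_j} u_j(x_j) \leq S^+_{\ep_j}\varphi(x_j)$, while the sign-reversed manipulation gives $S^-_{\ep_j} u_j(x_j) \geq S^-_{\ep_j}\varphi(x_j)$. Combining, we obtain
\begin{equation*}
-\Delta_\infty^{\ep_j}\varphi(x_j) \;\leq\; -\Delta_\infty^{\ep_j} u_j(x_j) \;\leq\; f(x_j).
\end{equation*}
If $D\varphi(x_0) \neq 0$, an elementary Taylor expansion (presumably encapsulated in \LEM{discrete-to-continuous}) shows that $\Delta_\infty^{\ep_j}\varphi(x_j) \to \Delta_\infty\varphi(x_0) = \Delta_\infty^+\varphi(x_0)$, and continuity of $f$ closes the argument.

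The principal obstacle is the critical case $D\varphi(x_0)=0$, where the discrete operator need not converge to the upper-semicontinuous envelope $\Delta_\infty^+\varphi(x_0) = \max_{|v|=1}\langle D^2\varphi(x_0)v,v\rangle$; for instance, when $\varphi(y) = -|y-x_0|^2$ and $x_j - x_0 = o(\ep_j)$, one computes $\Delta_\infty^{\ep_j}\varphi(x_j) \to -1/2$ rather than $-1$. I sidestep this by perturbing the test function: given an arbitrary unit vector $p$ and small $\eta > 0$, repeat the argument above with $\varphi_\eta(x) := \varphi(x) + \eta\langle p, x-x_0\rangle$, using the non-strict-maximum version of the viscosity subsolution definition noted after \DEF{viscosity} (after a routine quartic correction to restore strictness if desired). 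Because $u-\varphi$ has a strict local maximum at $x_0$, the maximum point $x_0^\eta$ of $u - \varphi_\eta$ satisfies $x_0^\eta \to x_0$ as $\eta\to 0$, and $D\varphi_\eta(x_0^\eta) = D^2\varphi(x_0)(x_0^\eta - x_0) + \eta p$ is nonzero for fixed $\eta > 0$ and $|x_0^\eta - x_0|$ small, with direction tending to $p$. The non-critical argument therefore delivers $-\langle D^2\varphi(x_0)\hat p_\eta,\hat p_\eta\rangle \leq f(x_0^\eta)$ with $\hat p_\eta \to p$; sending $\eta\to 0$ and using continuity of $f$ gives $-\langle D^2\varphi(x_0)p,p\rangle \leq f(x_0)$ for every unit $p$. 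Choosing $p$ to maximize $\langle D^2\varphi(x_0)v,v\rangle$ over the unit sphere yields the required inequality $-\Delta_\infty^+\varphi(x_0) \leq f(x_0)$.
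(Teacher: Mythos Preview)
Your non-critical case $D\varphi(x_0)\neq 0$ is correct and coincides with the paper's first proof. The gap is in your reduction of the critical case $D\varphi(x_0)=0$. You assert that $D\varphi_\eta(x_0^\eta)=M(x_0^\eta-x_0)+\eta p$ is nonzero with direction tending to $p$; this requires $|x_0^\eta-x_0|=o(\eta)$, which does \emph{not} follow from $x_0^\eta\to x_0$. When $u-\varphi$ decays only quadratically at its strict maximum --- say $u(x)=\varphi(x)-\tfrac12|x-x_0|^2$ --- one computes $x_0^\eta=x_0-\eta p$, hence $D\varphi_\eta(x_0^\eta)=\eta(I-M)p$. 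The direction is then $(I-M)p/|(I-M)p|$, not $p$; and if $p$ happens to be an eigenvector of $M$ with eigenvalue $1$ (which can be precisely the top eigenvector you want), the gradient vanishes identically in $\eta$ and the non-critical argument is never available. You cannot sharpen the quadratic decay by a quartic correction without changing $D^2\varphi(x_0)$, so the ``routine'' fix you allude to does not help here.

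There is a partial repair you did not exploit: since $-\Delta_\infty^+\varphi(x_0)=\min_{|v|=1}\bigl(-\langle Mv,v\rangle\bigr)$, it would suffice to produce \emph{one} unit vector $q$ with $-\langle Mq,q\rangle\le f(x_0)$, so any subsequential limit of $\hat p_\eta$ would do --- provided $D\varphi_\eta(x_0^\eta)\neq 0$ along some sequence $\eta_k\to 0$ for \emph{some} choice of $p$. Establishing this last point for general $u$ is not routine, and the example above shows it can fail for a fixed $p$. The paper avoids the issue altogether by two different devices: the first proof replaces $u_j$ by its max over balls of radius $(2^k-1)\ep_j$ via \LEM{max-over-balls} and argues that the maximizer is pushed to the edge of the ball; the second proof replaces $\varphi$ by $\varphi_j(x)=\min_{\bar B(x,\ep_j)}\varphi$ and invokes \PROP{continuous-sup-convolution} together with the upper semicontinuity of $x\mapsto\Delta_\infty^+\varphi(x)$, which handles the critical and non-critical cases uniformly without any linear perturbation.
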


We now turn to results for the continuum equation, which we obtain with the help of the results above and an interesting relationship between solutions of the the continuum and discrete equations (see \PROP{continuous-sup-convolution} below).

From \PROP{equicontinuous} below, we see that if $\ep_j\downarrow 0$, then a sequence $\{ u_j \} \subseteq C(\bar\Omega)$ of solutions of \EQ{existence} is uniformly equicontinuous. Such a sequence $\{ u_j \}$ is also uniformly bounded. In particular, the Arzela-Ascoli theorem asserts that every sequence $\{ \ep_j \}$ has a subsequence for which the maximal solutions of \EQ{existence} for $\ep = \ep_j$ converge uniformly on $\bar \Omega$ to some function $u$. According to \THM{convergence}, the limit function $u$ is a viscosity solution of
\begin{equation}\label{eq:existence-continuous}
\left\{ \begin{aligned}
& -\Delta_\infty u = f & \mbox{in} & \ \Omega, \\
& u = g & \mbox{on} & \ \partial \Omega.
\end{aligned} \right.
\end{equation}
In particular, the boundary-value problem \EQ{existence-continuous} possesses a solution for any given $f\in C(\Omega) \cap L^\infty(\Omega)$ and $g\in C(\partial \Omega)$. This result appears to be new for the normalized infinity Laplacian, as all previous existence results of which we are aware (see for example \cite[Theorems 4.1 and 4.2]{Lu:2008}, in addition to \THM{Peres2009} above) have required $f > 0$, $f < 0$ or $f \equiv 0$ in $\Omega$.

\begin{cor}\label{cor:existence-continuous}
There exists a viscosity solution $u \in C(\bar\Omega)$ of \EQ{existence-continuous}.
\end{cor}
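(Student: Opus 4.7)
The plan is to construct $u$ as a uniform subsequential limit of the maximal discrete solutions $\bar u_{\ep_j}$ produced by \THM{existence} as $\ep_j\downarrow 0$, and then verify the continuum PDE by invoking \THM{convergence} twice — once directly for the subsolution inequality, and once via the sign symmetry of $\Delta_\infty^\ep$ for the supersolution inequality.

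First I would fix a sequence $\ep_j\downarrow 0$ and let $u_j := \bar u_{\ep_j}\in C(\bar\Omega)$ be the corresponding maximal solutions from \THM{existence}, so that $-\Delta^{\ep_j}_\infty u_j = f$ in $\Omega$ and $u_j = g$ on $\partial\Omega$. The sequence $\{u_j\}$ is uniformly bounded; this is the content of the remark preceding the main results (the ``pulling toward a fixed boundary point'' strategy terminates the game in $O(\ep^{-2})$ expected steps, and one may alternatively read off the bound by comparing $u_j$ against affine-in-$\dist(\cdot,\partial\Omega)$ barriers via \COR{comparison}). By the forthcoming \PROP{equicontinuous}, $\{u_j\}$ is also uniformly equicontinuous on $\bar\Omega$. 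Arzela-Ascoli then yields a subsequence (not relabeled) converging uniformly on $\bar\Omega$ to some $u\in C(\bar\Omega)$, and uniform convergence up to $\partial\Omega$ preserves the boundary condition $u=g$.

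For the PDE, \THM{convergence} applied directly to $\{u_j\}$ shows that $u$ is a viscosity subsolution of $-\Delta_\infty u\le f$ in $\Omega$. For the supersolution inequality, I would exploit the fact that the finite difference operator is odd: the identities $S^+_\ep(-v) = S^-_\ep v$ and $S^-_\ep(-v) = S^+_\ep v$ give $\Delta_\infty^\ep(-v)=-\Delta_\infty^\ep v$, so the sequence $v_j := -u_j$ satisfies $-\Delta_\infty^{\ep_j} v_j = -f$ in $\Omega$, converges uniformly to $-u$, and hence (by \THM{convergence} with right-hand side $-f$) the limit $-u$ is a viscosity subsolution of $-\Delta_\infty(-u)\le -f$ in $\Omega$. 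A test polynomial $\varphi$ touching $u$ from below at $x_0$ corresponds via $\varphi=-\psi$ to $\psi$ touching $-u$ from above, and the pointwise identity $\ILP(-\varphi)(x_0) = -\ILM\varphi(x_0)$ converts this into $-\ILM\varphi(x_0)\ge f(x_0)$, which is exactly the supersolution condition for $u$. Combining the two inequalities, $u$ is a viscosity solution of \EQ{existence-continuous}.

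The substance has already been outsourced to \THM{existence} (discrete solvability), \PROP{equicontinuous} (uniform equicontinuity up to the boundary — the reason boundary-biased tug-of-war was introduced in the first place), and \THM{convergence} (consistency of the scheme). The remaining work is the compactness extraction and the odd-symmetry trick to obtain the supersolution half from the one-sided convergence theorem, neither of which presents any real obstacle. The only delicate point is making sure the equicontinuity from \PROP{equicontinuous} is global on $\bar\Omega$ so that uniform convergence, and hence preservation of the boundary data, is genuinely available — but this is precisely what the boundary bias buys us.
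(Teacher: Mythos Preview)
Your proposal is correct and follows exactly the route the paper takes: produce discrete solutions via \THM{existence}, extract a uniform limit on $\bar\Omega$ via \PROP{equicontinuous} and Arzela--Ascoli, and invoke \THM{convergence} (together with the odd symmetry $\Delta_\infty^\ep(-v)=-\Delta_\infty^\ep v$) to verify both the sub- and supersolution inequalities. The paper states this outline tersely and leaves the symmetry step for the supersolution direction implicit, whereas you spell it out; otherwise there is no difference.
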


The following stability result is a generalization of \cite[Theorem 1.9]{Lu:2008}. The latter result imposes the additional assumption that $f$ and $f_k$ be positive, negative, or identically zero in $\Omega$.

\begin{thm}\label{thm:stability}
Assume that $f\in C(\Omega)\cap L^\infty(\Omega)$ and $\{f_k\}_{k=1}^\infty \subseteq C(\Omega)\cap L^\infty(\Omega)$ such that $\sup_{k\geq 1} \| f_k \|_{L^\infty(\Omega)} < \infty$ and $f_k \to f$ locally uniformly in $\Omega$ as $k\to \infty$. Suppose that for each $k$, the function $u_k \in C(\bar\Omega)$ is a viscosity solution of the problem
\begin{equation}
\left\{ \begin{aligned}
& -\Delta_\infty u_k = f_k & \mbox{in} & \ \Omega, \\
& u_k = g & \mbox{on} & \ \partial \Omega.
\end{aligned} \right.
\end{equation}
Then there exist a subsequence $\{ u_{k_j} \}$ and a solution $u\in C(\bar\Omega)$ of \EQ{existence-continuous} such that $u_{k_j} \to u$ uniformly on $\bar \Omega$ as $j \to \infty$.
\end{thm}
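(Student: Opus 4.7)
The plan is to establish uniform boundedness and equicontinuity of $\{u_k\}$ on $\bar\Omega$, extract a uniformly convergent subsequence by Arzel\`a--Ascoli, and then identify the limit as a viscosity solution of the limit problem using a standard viscosity-stability argument. Since $u_k = g$ on $\partial \Omega$, a uniform modulus of continuity on $\bar\Omega$ automatically implies a uniform $L^\infty$-bound in terms of $\|g\|_\infty$ and $\diam(\Omega)$, so it suffices to produce a modulus of continuity $\omega$ for $\{u_k\}$ depending only on $\omega_g$, $\diam(\Omega)$, and $M := \sup_k \| f_k \|_{L^\infty(\Omega)}$.

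To transfer equicontinuity from the discrete setting, I would invoke the sup/inf-convolution relationship between continuum and discrete solutions (\PROP{continuous-sup-convolution}): for each $k$ and each small $\ep > 0$, the function $u_k^\ep(x) := \sup_{\bar\Omega(x,\ep)} u_k$ satisfies a discrete subsolution inequality of the form $-\Delta_\infty^\ep u_k^\ep \leq f_k + \eta(\ep)$ in $\Omega$, with an error $\eta(\ep)\to 0$ independent of $k$. Since $M + \eta(\ep) \geq 0$, \COR{comparison} applies and majorizes $u_k^\ep$ by the discrete solution of $-\Delta_\infty^\ep w = M + \eta(\ep)$ with boundary data $g$ on $\partial\Omega$. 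By \PROP{equicontinuous}, these discrete solutions admit a modulus of continuity depending only on $\omega_g$, $\diam(\Omega)$, and $M$, uniformly in $\ep$. A symmetric inf-convolution bound from below produces the same control. Sending $\ep \to 0$ transfers this modulus to $u_k$ itself, with no dependence on $k$.

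Once uniform equicontinuity and uniform boundedness are in hand, Arzel\`a--Ascoli produces a subsequence $u_{k_j} \to u$ uniformly on $\bar\Omega$ with $u = g$ on $\partial\Omega$. To verify that $u$ is a viscosity subsolution of $-\Delta_\infty u = f$, let $\varphi$ be a $C^2$ function with $u - \varphi$ having a strict local maximum at $x_0 \in \Omega$. Uniform convergence produces points $x_j \to x_0$ at which $u_{k_j} - \varphi$ attains a local maximum, giving $-\ILP \varphi(x_j) \leq f_{k_j}(x_j)$. Sending $j \to \infty$, the upper semicontinuity of $x \mapsto \ILP \varphi(x)$, together with the locally uniform convergence $f_k \to f$ and $x_j \to x_0 \in \Omega$, yields $-\ILP \varphi(x_0) \leq f(x_0)$. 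A symmetric argument using $\ILM$ and strict local minima shows $u$ is a supersolution, so $u$ solves \EQ{existence-continuous}.

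The main obstacle is the first step: passing the discrete modulus of continuity to the continuum solutions $u_k$ uniformly in $k$. Continuum comparison is not known for sign-changing $f$, so one cannot simply sandwich $u_k$ between explicit continuum barriers. The sup/inf-convolution route circumvents this by working entirely at the discrete level, where \COR{comparison} always applies once the right-hand side is replaced by the (nonnegative) dominating constant $M + \eta(\ep)$, and then transferring back to the continuum via \PROP{continuous-sup-convolution}. The remainder of the argument is a standard half-relaxed-limits stability verification.
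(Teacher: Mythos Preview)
Your equicontinuity step contains a genuine gap. Majorizing $u_k^\ep$ by a discrete solution $w$ with a known modulus, and minorizing $(u_k)_\ep$ by another, yields only pointwise bounds $w^- \lesssim u_k \lesssim w^+$; it does not transfer a modulus of continuity to $u_k$. A sandwich between two equicontinuous functions says nothing about the oscillation of the function in between. There are also boundary issues: $u_k^\ep$ is only a discrete subsolution in $\Omega_{2\ep}$, not in $\Omega$, and it does not satisfy $u_k^\ep \leq g$ on $\partial\Omega$, so \COR{comparison} is not directly applicable on $\Omega$. Your remark that ``continuum comparison is not known for sign-changing $f$, so one cannot simply sandwich $u_k$ between explicit continuum barriers'' is the source of the detour, but it overlooks that comparison with \emph{quadratic cones} (\LEM{cca}) holds for any constant $c\in\R$, with no sign restriction. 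The paper exploits exactly this: \LEM{continuum-estimates} gives, directly from \LEM{cca} and the increasing slope estimate, a modulus $\omega$ for each $u_k$ depending only on $\diam(\Omega)$, $\omega_g$, and $\|f_k\|_{L^\infty(\Omega)}\le M$, hence uniform in $k$. This replaces your first step cleanly.

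Your second step---the direct viscosity-stability argument via upper semicontinuity of $x\mapsto \ILP\varphi(x)$---is correct and is in fact simpler than what the paper does. The paper instead passes through the discrete equation: it takes $\ep_k=1/k$, applies \PROP{continuous-sup-convolution} to get $-\Delta_\infty^{\ep_k}\tilde u_k\le \tilde f_k$ for $\tilde u_k=\max_{\bar B(\cdot,\ep_k)}u_k$, and then invokes \THM{convergence} to pass to the limit. Both routes work; yours avoids the detour through the finite-difference theory, while the paper's route illustrates the unifying role of \PROP{continuous-sup-convolution} and \THM{convergence}.
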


With the help of \THM{stability} we obtain the following existence result, which is an improvement of \COR{existence-continuous}.

\begin{thm}\label{thm:existence-continuous}
There exist solutions $\underline u, \bar u \in C(\bar \Omega) \cap C^{0,1}(\Omega)$ of \EQ{existence-continuous} such that whenever $w \in USC(\bar \Omega)$ ($w \in LSC(\bar \Omega)$) is a subsolution (supersolution) of the equation $-\Delta_\infty w = f$ and $w\leq g$ ($w \geq g$) on $\partial \Omega$, we have $w \leq \bar u$ ($w \geq \underline u$) in $\Omega$.
\end{thm}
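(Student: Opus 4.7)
The plan is to realize $\bar u$ as a uniform limit of the discrete maximal solutions $\bar u_\ep$ from \THM{existence}, and then to promote maximality at the continuum level via the ``max over $\ep$-balls'' device highlighted in the introduction. The minimal solution $\underline u$ is obtained by the symmetric construction with $\underline u_\ep$ in place of $\bar u_\ep$, so I focus on $\bar u$.

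\emph{Construction of $\bar u$.} Fix a sequence $\ep_j \downarrow 0$ and let $\bar u_j := \bar u_{\ep_j} \in C(\bar\Omega)$ be the maximal solution of \EQ{existence} from \THM{existence}. The family $\{\bar u_j\}$ is uniformly bounded (by a cone-type barrier, equivalently by the pulling strategy referenced before the statement of \THM{no-extrema-comparison}) and uniformly equicontinuous on $\bar\Omega$ by the equicontinuity statement alluded to in the paragraph just above \COR{existence-continuous}. Arzel\`a--Ascoli then furnishes a subsequence converging uniformly on $\bar\Omega$ to some $\bar u \in C(\bar\Omega)$, with boundary values $\bar u = g$ passing to the limit automatically. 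Applying \THM{convergence} both to $\bar u_j$ and to $-\bar u_j$ (which satisfies the reversed discrete inequality, since each $\bar u_j$ solves the discrete equation) shows that $\bar u$ is simultaneously a viscosity subsolution and a viscosity supersolution of $-\IL \bar u = f$ in $\Omega$. For interior Lipschitz regularity I would derive a uniform Lipschitz estimate for $\bar u_j$ on each $\Omega_\delta$ with $\ep_j < \delta/2$: on this set $\rho_{\ep_j}\equiv \ep_j$, so the dynamic programming relation \EQ{dynamic-prog-bb} controls both one-sided slopes $S^\pm_{\ep_j}\bar u_j$ in terms of $\|f\|_{L^\infty(\Omega)}$ and the uniform $L^\infty$ bound above, yielding a local Lipschitz bound stable under the limit and hence $\bar u \in C^{0,1}(\Omega)$.

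\emph{Maximality.} Let $w \in \USC(\bar\Omega)$ be any viscosity subsolution of $-\IL w = f$ with $w \leq g$ on $\partial\Omega$. Introduce the ball-sup convolution
\begin{equation*}
w^\ep(x) := \sup\{\, w(y) : y \in \overline{\Omega(x,\ep)}\,\}, \qquad x \in \bar\Omega.
\end{equation*}
The key technical input, mentioned informally in the introduction as the ``max over $\ep$-balls'' observation, is that $w^\ep$ satisfies
\begin{equation*}
-\Delta_\infty^\ep w^\ep \leq f + \mu(\ep) \quad \mbox{in } \Omega,
\end{equation*}
for some modulus $\mu(\ep)\to 0$ depending on the local modulus of $f$ and the oscillation of $w$, while on $\partial\Omega$ the inequality $w \leq g$ and continuity of $g$ give $w^\ep \leq g + \omega_g(\ep)$. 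Let $\bar u^{+}_{\ep_j}$ be the maximal discrete solution from \THM{existence} corresponding to the perturbed data $(f + \mu(\ep_j),\,g + \omega_g(\ep_j))$; the maximality clause of that theorem yields $w^{\ep_j} \leq \bar u^{+}_{\ep_j}$ on $\bar\Omega$. Passing to the limit along the same subsequence used to construct $\bar u$, and invoking \THM{stability} to absorb the vanishing perturbations of $f$ (and a parallel continuous-dependence statement for the boundary shift, or equivalently a second application of the equicontinuity/compactness from Step~1), we obtain $w \leq \bar u$ on $\bar\Omega$.

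\emph{Main obstacle.} The delicate step is the ball-sup estimate $-\Delta_\infty^\ep w^\ep \leq f + \mu(\ep)$. Unlike the classical quadratic sup-convolution, the operation $w \mapsto w^\ep$ does not automatically produce touching test paraboloids at the maximizers, so the proof cannot simply quote \DEF{viscosity}; instead one must argue directly at a point $y^\ast \in \overline{\Omega(x,\ep)}$ realizing the ball-sup, exploit the one-sided structure of $S^\pm_\ep$ (which transfers upper-semicontinuous information without requiring smoothness), and use continuity of $f$ to produce the quantitative modulus $\mu(\ep)$. A secondary technical point is the boundary correction: the additive shift $\omega_g(\ep)$ must be absorbed via \THM{stability} so that the limit recovers the original data $(f,g)$ rather than a perturbation of it, and this is where the stability result does essential work in closing the maximality argument.
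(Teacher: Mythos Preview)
Your construction of a solution $\bar u$ as a subsequential uniform limit of the discrete maximal solutions $\bar u_{\ep_j}$ is fine and, together with \LEM{icont}, yields the claimed regularity; this is essentially \COR{existence-continuous}. The maximality argument, however, is where your route diverges from the paper's, and it contains a genuine gap.

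The paper does not go through the discrete maximal solutions at all for maximality. Instead it perturbs at the continuum level: for each $k\geq 1$ take any continuum solution $u_k$ of $-\IL u_k = f + 1/k$ with $u_k = g$ on $\partial\Omega$ (supplied by \COR{existence-continuous}). For an arbitrary subsolution $w$ of $-\IL w \leq f$ with $w\leq g$ on $\partial\Omega$, \PROP{strict-comparison} gives $w \leq u_k$ directly, because $f < f + 1/k$ strictly. One then passes to a subsequential limit $u_k \to \bar u$ via \LEM{continuum-estimates} and \THM{stability}; the limit is a solution, and $w \leq \bar u$ for every such $w$. All of the ball-sup-convolution machinery you describe is already packaged inside the proof of \PROP{strict-comparison}.

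Your route fixes $\bar u$ in advance as a limit of \emph{unperturbed} discrete solutions, and then tries to trap $w$ below perturbed discrete maximal solutions $\bar u^+_{\ep_j}$ for data $(f+\mu(\ep_j),\,g+\omega_g(\ep_j))$. Set aside the secondary issues (that \PROP{continuous-sup-convolution} gives the inequality only in $\Omega_{2\ep}$, not in all of $\Omega$, and that $w^\ep \leq g + \omega_g(\ep)$ on $\partial\Omega$ requires an a priori one-sided boundary estimate for $w$ that you have not justified). The real problem is the final limit: you need $\bar u^+_{\ep_j} \to \bar u$, the \emph{specific} function you already constructed. A subsequential limit of $\bar u^+_{\ep_j}$ is certainly \emph{some} solution $\tilde u$ of \EQ{existence-continuous}, and $\bar u_{\ep_j}\leq \bar u^+_{\ep_j}$ gives $\bar u \leq \tilde u$; but the reverse inequality $\tilde u \leq \bar u$ is precisely the maximality of $\bar u$ you are trying to prove. \THM{stability} does not resolve this: it concerns convergence of continuum solutions and, absent uniqueness, does not identify which solution a given sequence selects. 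The circularity disappears only if you \emph{define} $\bar u$ as the limit of the perturbed objects---at which point you have recovered the paper's argument.
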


Recently, Lu and Wang \cite{Lu:preprint} found another proof of \THM{existence-continuous} using a different approach.

In the case that $\underline u = \bar u$, problem \EQ{existence-continuous} has a unique solution, and we immediately deduce that $\| \bar u_\ep - \underline u_\ep \|_{L^\infty(\Omega)} \to 0$ as $\ep \to 0$, since both $\bar u_\ep$ and $\underline u_\ep$ must converge to this unique solution, uniformly on $\bar \Omega$.

\begin{cor}\label{cor:uniqueness-convergence}
Assume that the boundary-value problem \EQ{existence-continuous} has a unique viscosity solution $u \in C(\bar\Omega)$. Then $\bar u_\ep \to u$ and $\underline u_\ep \to u$ uniformly on $\bar \Omega$ as $\ep \to 0$.
\end{cor}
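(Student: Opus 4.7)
The plan is to combine the compactness and convergence machinery already assembled just before the statement with a soft subsequence argument, and then close the loop with the uniqueness hypothesis. By \PROP{equicontinuous} the family $\{\bar u_\ep\}_{0<\ep<1}$ is uniformly equicontinuous on $\bar\Omega$, and it is uniformly bounded (a bound in terms of $\|g\|_{L^\infty(\partial\Omega)}$ and $\|f\|_{L^\infty(\Omega)}$ is implicit in the termination-strategy remark). Given any sequence $\ep_j \downarrow 0$, Arzel\`a--Ascoli produces a subsequence $\ep_{j_k}$ along which $\bar u_{\ep_{j_k}}$ converges uniformly on $\bar\Omega$ to some $v \in C(\bar\Omega)$; uniform convergence up to the boundary transfers the Dirichlet data, so $v = g$ on $\partial\Omega$.

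Next I would verify that $v$ satisfies $-\Delta_\infty v = f$ in $\Omega$ in the viscosity sense. The subsolution direction is immediate from \THM{convergence} applied to $\{\bar u_{\ep_{j_k}}\}$. For the supersolution direction, I would exploit the antisymmetry $-\Delta_\infty^\ep(-w) = \Delta_\infty^\ep w$, which is a one-line consequence of $S_\ep^+(-w) = S_\ep^-(w)$ and $S_\ep^-(-w) = S_\ep^+(w)$: the finite-difference equation for $\bar u_{\ep_{j_k}}$ read in the reverse direction says that $\{-\bar u_{\ep_{j_k}}\}$ is a sequence of subsolutions of $-\Delta_\infty^{\ep_{j_k}} w \le -f$, and \THM{convergence} then yields that $-v$ is a viscosity subsolution of $-\Delta_\infty w \le -f$. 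Since $\ILM \psi = -\ILP(-\psi)$ by the definition of $\ILM$, this is exactly the statement that $v$ is a viscosity supersolution of $-\Delta_\infty v = f$. Hence $v$ solves \EQ{existence-continuous}.

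By the uniqueness hypothesis, $v = u$. Because every sequence $\ep_j \downarrow 0$ admits a subsequence along which $\bar u_{\ep_j} \to u$ uniformly on $\bar\Omega$, a standard Urysohn-type argument yields the full convergence $\bar u_\ep \to u$ in $C(\bar\Omega)$: if not, one could find $\delta > 0$ and $\ep_j \downarrow 0$ with $\|\bar u_{\ep_j} - u\|_{L^\infty(\bar\Omega)} \ge \delta$, contradicting the existence of a uniformly convergent subsubsequence with limit $u$. The identical proof, with $\underline u_\ep$ in place of $\bar u_\ep$, gives $\underline u_\ep \to u$ uniformly on $\bar\Omega$. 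I do not anticipate a genuine obstacle here---all the technical content (equicontinuity, the discrete-to-continuum subsolution passage, and continuum existence) is packaged in the cited results---so the remaining argument is a soft compactness-plus-uniqueness chase; the one small point worth isolating is the passage from subsolution convergence to full solution convergence via the negation symmetry of $-\Delta_\infty^\ep$ and $-\Delta_\infty$.
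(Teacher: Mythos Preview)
Your argument is correct and follows essentially the same route as the paper: the discussion preceding \COR{existence-continuous} already packages the equicontinuity, Arzel\`a--Ascoli, and \THM{convergence} steps to conclude that subsequential limits of solutions of \EQ{existence} solve \EQ{existence-continuous}, and the paper then simply invokes uniqueness to force full convergence. Your treatment is slightly more explicit about the supersolution direction via the symmetry $-\Delta_\infty^\ep(-w)=\Delta_\infty^\ep w$, which the paper leaves implicit when it asserts that the limit is a viscosity \emph{solution} from a theorem stated only for subsolutions.
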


Our next result asserts that uniqueness occurs in the generic case, which gives an affirmative answer to the first open question posed in Section 8 of \cite{Peres:2009}. It is easily deduced from \THM{existence-continuous} and \PROP{strict-comparison}, below.

\begin{thm}\label{thm:uniqueness-generic}
There exists an at most countable set $\mathcal{N} \subseteq \R$ such that the problem 
\begin{equation}
\left\{   \begin{aligned}
&-\Delta_\infty u = f + c & & \mbox{in} \ \Omega, \\
& u = g & & \mbox{on} \ \partial \Omega,
\end{aligned} \right.
\end{equation}
has a unique solution for every $c \in \R \setminus \mathcal{N}$.
\end{thm}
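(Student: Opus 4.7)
The plan is to apply the maximal/minimal solution pair $\bar u_c, \underline u_c \in C(\bar\Omega)\cap C^{0,1}(\Omega)$ from \THM{existence-continuous} for each $c\in\R$, where these solve the Dirichlet problem with right-hand side $f+c$ and boundary data $g$. Uniqueness at parameter $c$ is equivalent to $\bar u_c\equiv\underline u_c$ on $\bar\Omega$, so the task reduces to showing that the exceptional set $\mathcal{N}_0 := \{c\in\R : \bar u_c\not\equiv\underline u_c\}$ is at most countable.

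The first step is monotonicity in $c$. If $c_1\leq c_2$, then $\bar u_{c_1}$ is a viscosity subsolution of $-\Delta_\infty u = f+c_2$ with boundary data $g$, so the maximality characterization in \THM{existence-continuous} gives $\bar u_{c_1}\leq \bar u_{c_2}$ on $\bar\Omega$. The symmetric argument, using the minimality of $\underline u_{c_1}$, yields $\underline u_{c_1}\leq \underline u_{c_2}$. Consequently, both $c\mapsto \bar u_c(x)$ and $c\mapsto \underline u_c(x)$ are nondecreasing real-valued functions of $c$ for each fixed $x\in\bar\Omega$.

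Next I would fix a countable dense subset $\{x_n\}_{n\geq 1}\subseteq\bar\Omega$. For each $n$ the monotone scalar function $c\mapsto\underline u_c(x_n)$ has at most countably many discontinuities; let $\mathcal{N}$ be the (countable) union of these discontinuity sets over all $n$. For $c\notin\mathcal{N}$ and every $c'>c$, \PROP{strict-comparison} applied to the subsolution $\bar u_c$ of $-\Delta_\infty u=f+c$ and the supersolution $\underline u_{c'}$ of $-\Delta_\infty u=f+c'=(f+c)+(c'-c)$ — with identical boundary data $g$ and a \emph{strict} positive gap $c'-c$ between the right-hand sides — produces the key inequality $\bar u_c\leq\underline u_{c'}$ on $\bar\Omega$. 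Evaluating at $x_n$ and letting $c'\downarrow c$, the continuity of $c\mapsto \underline u_c(x_n)$ at $c$ yields $\bar u_c(x_n)\leq\underline u_c(x_n)$. Since this holds for every $n$ and both $\bar u_c,\underline u_c\in C(\bar\Omega)$, density of $\{x_n\}$ upgrades it to $\bar u_c\leq\underline u_c$ everywhere on $\bar\Omega$. Combined with the automatic reverse inequality, $\bar u_c\equiv\underline u_c$, so $\mathcal{N}_0\subseteq\mathcal{N}$ and the theorem follows.

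The main (and essentially only) obstacle is the correct invocation of \PROP{strict-comparison}: the strict positive gap $c'-c$ is precisely what defeats the non-uniqueness present at the fixed level $c$ itself. Everything else is a soft monotone-function argument, and the passage to the countable dense subset is used only to reduce the functional equality $\bar u_c\equiv\underline u_c$ to countably many scalar monotone limit statements, sidestepping any a.e./semicontinuity subtleties that would arise from working with integral-type continuity of $c\mapsto\int_\Omega\underline u_c\,dx$.
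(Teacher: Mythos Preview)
Your proof is correct and follows essentially the same approach as the paper: both use \PROP{strict-comparison} to obtain $\bar u_c \leq \underline u_{c'}$ for $c<c'$, and then conclude via the countability of discontinuities of a monotone scalar function of $c$. The only cosmetic difference is that the paper uses the single integral functional $I^+(c):=\int_\Omega \bar u_c\,dx$ in place of your countable family of point evaluations $c\mapsto \underline u_c(x_n)$; either choice works.
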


Via similar arguments we obtain the corresponding statement for the discrete infinity Laplace equation.

\begin{thm}\label{thm:uniqueness-generic-discrete}
There exists an at most countable set $\mathcal{N}_\ep \subseteq \R$ such that the problem 
\begin{equation}
\left\{   \begin{aligned}
&-\Delta^\ep_\infty u = f + c & & \mbox{in} \ \Omega, \\
& u = g & & \mbox{on} \ \partial \Omega,
\end{aligned} \right.
\end{equation}
has a unique solution for every $c \in \R \setminus \mathcal{N}_\ep$.
\end{thm}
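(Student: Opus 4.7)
The plan is to mimic, in the finite-difference setting, the deduction of \THM{uniqueness-generic} from \THM{existence-continuous} and \PROP{strict-comparison}, replacing those two ingredients by their discrete analogues \THM{existence} and a discrete strict comparison principle.

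The main technical obstacle is to establish the following discrete strict comparison: if $c_1 < c_2$, and $\bar u_{\ep,c_1}$, $\underline u_{\ep,c_2}$ denote the maximal solution of \EQ{existence} with right-hand side $f+c_1$ and the minimal solution with right-hand side $f+c_2$ respectively, then
\[
\bar u_{\ep,c_1} \leq \underline u_{\ep,c_2} \quad \text{on } \bar \Omega.
\]
The argument should parallel \PROP{strict-comparison}, exploiting the strict gap $c_2 - c_1 > 0$ in the right-hand sides. One natural route is to perturb one of the solutions by a small smooth bump in order to upgrade the non-strict comparison of \THM{no-extrema-comparison} to a usable strict one at any would-be interior maximum of $\bar u_{\ep,c_1} - \underline u_{\ep,c_2}$; alternatively, adapt the ``maxing over $\ep$-balls'' machinery previewed in the introduction, which is tailor-made for the operator $\Delta_\infty^\ep$.

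Granted this, fix $c \in \R$ and write $\bar u_c := \bar u_{\ep,c}$, $\underline u_c := \underline u_{\ep,c}$. Each of $c \mapsto \bar u_c(x)$ and $c \mapsto \underline u_c(x)$ is monotone nondecreasing: for $c_1 \leq c_2$, the function $\bar u_{c_1}$ satisfies $-\Delta_\infty^\ep \bar u_{c_1} = f + c_1 \leq f + c_2$ and $\bar u_{c_1} = g$ on $\partial \Omega$, so the maximality clause in \THM{existence} gives $\bar u_{c_1} \leq \bar u_{c_2}$; the symmetric argument handles $\underline u$. Consequently
\[
\Phi(c) := \int_\Omega \bar u_c(x)\, dx
\]
is a bounded (by \THM{existence} applied on any compact $c$-interval), monotone nondecreasing function of $c$, and therefore continuous outside an at most countable set, which we take as $\mathcal{N}_\ep$.

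Finally, let $c \in \R \setminus \mathcal{N}_\ep$. Strict comparison gives $\bar u_{c'}(x) \leq \underline u_c(x)$ for every $x \in \bar \Omega$ and every $c' < c$. Letting $c' \uparrow c$ along any sequence and invoking monotone convergence,
\[
\Phi(c) = \lim_{c' \uparrow c} \Phi(c') = \int_\Omega \lim_{c' \uparrow c} \bar u_{c'}\, dx \leq \int_\Omega \underline u_c\, dx \leq \int_\Omega \bar u_c\, dx = \Phi(c).
\]
Equality throughout forces $\int_\Omega (\bar u_c - \underline u_c)\, dx = 0$, which, combined with $\bar u_c \geq \underline u_c$ pointwise and the continuity of both functions supplied by \THM{existence}, yields $\bar u_c \equiv \underline u_c$ on $\bar \Omega$. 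Since any solution of the problem is sandwiched between $\underline u_c$ and $\bar u_c$ by \THM{existence}, this delivers uniqueness for every $c \in \R \setminus \mathcal{N}_\ep$. The real labor is in the strict comparison step; the monotone-function / measure-theoretic wrapper transfers verbatim from the continuous proof.
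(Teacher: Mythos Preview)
Your overall architecture matches the paper's: define maximal and minimal solutions $\bar u_c,\underline u_c$ via \THM{existence}, integrate to get a monotone scalar function of $c$, and read off countability of the exceptional set from its discontinuities. The paper's own proof is a one-liner pointing to exactly this substitution.

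However, you significantly overcomplicate the ``discrete strict comparison'' step, which you flag as the \emph{real labor} and propose to attack via bump perturbations or the max-over-balls machinery. In fact this step is immediate from \REM{strict-comparison-discrete}, and that is precisely the replacement the paper indicates. Here is the whole argument: suppose $c_1<c_2$ and $\max_{\bar\Omega}(\bar u_{c_1}-\underline u_{c_2})>0$. Since both functions are continuous and agree with $g$ on $\partial\Omega$, the maximum is attained at some $x_0\in\Omega$, and in particular $(\bar u_{c_1}-\underline u_{c_2})(x_0)=\sup_{\bar\Omega(x_0,\ep)}(\bar u_{c_1}-\underline u_{c_2})$. \REM{strict-comparison-discrete} then gives
\[
f(x_0)+c_1 \;=\; -\Delta_\infty^\ep \bar u_{c_1}(x_0)\;\geq\; -\Delta_\infty^\ep \underline u_{c_2}(x_0)\;=\; f(x_0)+c_2,
\]
contradicting $c_1<c_2$. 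The point is that, unlike the continuum case (\PROP{strict-comparison}), the finite-difference operator is defined pointwise rather than in the viscosity sense, so no regularization or sup-convolution is needed. Once you see this, your separate monotonicity argument for $c\mapsto\bar u_c$ also becomes redundant: the inequality $I^+(c_1)\leq I^-(c_2)\leq I^+(c_2)$ for $c_1<c_2$ follows directly, and the proof collapses to the paper's version.
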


Examples are presented in \cite[Section 5]{Peres:2009} of $f$ for which the boundary value problem \EQ{existence-continuous} has infinitely many solutions. The functions $f$ given in these examples change sign in $\Omega$. This non-uniqueness phenomenon is not well understood. It is even unknown whether we have uniqueness for \EQ{existence-continuous} under the assumption that $f\geq 0$. The most general uniqueness result is the following theorem, which first appeared\footnote{Although this is a simple corollary of \THM{Peres2009}, since we only need to move in a little from the boundary.} in \cite{Lu:2008}.

\begin{thm}[See \cite{Lu:2008} and \cite{Peres:2009}]\label{thm:comparison-continuous}
Assume that $f> 0$, $f< 0$, or $f\equiv 0$. Suppose that $u,-v\in \USC(\bar\Omega)$ satisfy the differential inequalities
\begin{equation}
-\Delta_\infty u \leq f \leq -\Delta_\infty v\quad \mbox{in} \ \Omega.
\end{equation}
Then
\begin{equation*}
\max_{\bar \Omega} (u-v) = \max_{\partial \Omega} (u-v).
\end{equation*}
\end{thm}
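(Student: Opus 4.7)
The plan is to derive the continuum comparison from the finite-difference comparison \COR{comparison} by regularizing $u$ and $v$ via the sup- and inf-envelopes
\[
u^\ep(x) := \sup_{y\in \bar\Omega(x,\ep)} u(y), \qquad v_\ep(x) := \inf_{y\in \bar\Omega(x,\ep)} v(y),
\]
and then passing to the limit $\ep \to 0$. The ``maxing over $\ep$-balls'' idea announced in the introduction is supposed to guarantee that, via \PROP{continuous-sup-convolution}, one has the approximate discrete inequalities
\[
-\Delta_\infty^\ep u^\ep \leq f + \eta(\ep) \quad\text{and}\quad -\Delta_\infty^\ep v_\ep \geq f - \eta(\ep) \quad \text{in } \Omega_{2\ep},
\]
with an error $\eta(\ep) \to 0$ inherited from the modulus of continuity of $f$.

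With this in hand I would exploit the strict sign hypothesis on $f$ to verify one of the alternative hypotheses of \COR{comparison}. If $\min_{\bar\Omega} f =: m > 0$, then for $\ep$ small enough we have $-\Delta_\infty^\ep v_\ep \geq m - \eta(\ep) > 0$ on $\Omega_{2\ep}$, which is the hypothesis $-\Delta_\infty^\ep v_\ep \geq 0$. Symmetrically, if $\max_{\bar\Omega} f < 0$ one uses $-\Delta_\infty^\ep u^\ep < 0$. For $f \equiv 0$ the comparison-with-cones property of infinity subharmonic functions should translate directly into $S^+_\ep u^\ep \leq S^-_\ep u^\ep$, so that $-\Delta_\infty^\ep u^\ep \leq 0$ holds without any error; alternatively one replaces $u$ by $u - \delta \psi$ for a strict classical supersolution $\psi$ (e.g.\ a suitable power of a distance function) and lets $\delta \to 0$.

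I would then apply \COR{comparison} on $\bar\Omega_{2\ep}$ to $u^\ep$ and $v_\ep$, obtaining
\[
\max_{\bar\Omega_{2\ep}} (u^\ep - v_\ep) = \max_{\partial \Omega_{2\ep}} (u^\ep - v_\ep),
\]
and pass to the limit $\ep \to 0$. Since $u \in \USC(\bar\Omega)$ and $v \in \LSC(\bar\Omega)$, a standard semicontinuity argument gives $u^\ep(x) \to u(x)$ and $v_\ep(x) \to v(x)$ pointwise on $\bar\Omega$, and the uniform boundedness of $u$ and $v$ combined with the shrinking of the boundary strip $\Omega \setminus \Omega_{2\ep}$ allows one to conclude $\max_{\bar\Omega}(u-v) = \max_{\partial\Omega}(u-v)$.

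The main obstacle will be the first step, namely rigorously establishing that the sup-convolution of a continuum viscosity subsolution of $-\Delta_\infty u = f$ is an approximate discrete subsolution of $-\Delta_\infty^\ep u^\ep = f$. This amounts to quantitatively comparing the continuum operators $\ILP$, $\ILM$ with the finite-difference operator $\Delta_\infty^\ep$ on smooth test functions touching $u^\ep$ from above, and in particular handling the degenerate case $Du^\ep(x_0) = 0$ via the envelope definitions of $\ILP$ and $\ILM$. A secondary nuisance is the boundary behavior of the envelopes, which is managed by carrying out the discrete comparison on the inward-shifted domain $\Omega_{2\ep}$ and then using semicontinuity to take the limit on the boundary trace.
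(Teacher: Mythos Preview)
Your outline is essentially the paper's proof in the case $f\equiv 0$: there $f^{2\ep}=f_{2\ep}=0$, so \PROP{continuous-sup-convolution} gives $-\Delta_\infty^\ep u^\ep\le 0\le -\Delta_\infty^\ep v_\ep$ exactly, and one applies \PROP{no-extrema-comparison-ep-thick} and sends $\ep\to 0$. That part is fine.

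The case $f>0$ has a real gap. \COR{comparison} (and its variant \PROP{no-extrema-comparison-ep-thick}) has \emph{two} hypotheses: the pointwise inequality $-\Delta_\infty^\ep u^\ep \le -\Delta_\infty^\ep v_\ep$, \emph{and} the sign condition $-\Delta_\infty^\ep u^\ep\le 0$ or $-\Delta_\infty^\ep v_\ep\ge 0$. You verify only the second. From \PROP{continuous-sup-convolution} you get
\[
-\Delta_\infty^\ep u^\ep \le f^{2\ep}\le f+\eta(\ep),\qquad -\Delta_\infty^\ep v_\ep \ge f_{2\ep}\ge f-\eta(\ep),
\]
so at any point one only knows $-\Delta_\infty^\ep u^\ep - (-\Delta_\infty^\ep v_\ep)\le 2\eta(\ep)$, not $\le 0$. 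The discrete comparison therefore does not apply, and no amount of ``$-\Delta_\infty^\ep v_\ep\ge m-\eta(\ep)>0$'' repairs this; the sign condition is only there to rule out strict $\ep$-local extrema, not to absorb the $2\eta(\ep)$ gap.

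The paper closes this gap by a multiplicative trick that exploits $f\ge a>0$: replacing $v_\ep$ by $w:=(1+a^{-1}\gamma)v_\ep$ boosts the supersolution inequality to $-\Delta_\infty^\ep w\ge (1+a^{-1}\gamma)f_{2\ep}\ge f+\gamma$, which now dominates $f^{2\ep}$ for $\ep$ small (this is \PROP{dep-pos}). One then applies \PROP{no-extrema-comparison-ep-thick}, lets $\ep\to 0$, and finally $\gamma\to 0$, paying only $\|v\|_{L^\infty}a^{-1}\gamma$. A preliminary reduction to a compactly contained subdomain is also needed, since $f>0$ on $\Omega$ does not by itself give $\inf_\Omega f>0$. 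Your proposal is missing this scaling step; without it the argument does not go through for nonzero $f$.
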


The uniqueness of infinity harmonic functions with given boundary data is due to Jensen \cite{Jensen:1993}, and new proofs and extensions have appeared in the papers of Barles and Busca \cite{Barles:2001}, Aronsson, Crandall, and Juutinen \cite{Aronsson:2004}, Crandall, Gunnarsson and Wang \cite{Crandall:2007}, Peres, Schramm, Sheffield, and Wilson \cite{Peres:2009}, and Lu and Wang \cite{Lu:2008}. With the exception of \cite{Peres:2009}, which used probabilistic methods, all of the papers mentioned above use deep results in viscosity solution theory (as presented for example in \cite{UsersGuide}) as well as Aleksandrov's theorem on the twice differentiability of convex functions.

Recently, the authors \cite{Armstrong:preprint} discovered a new proof of the uniqueness of infinity harmonic functions which does not invoke the uniqueness machinery of viscosity solution theory or Aleksandrov's theorem. Here we generalize the argument presented in \cite{Armstrong:preprint} to give a new PDE proof of \THM{comparison-continuous}. Our argument uses only results for the finite difference infinity Laplace equation, and \PROP{continuous-sup-convolution}, below.

The next theorem is an explicit estimate of the difference between an infinity harmonic function and a solution of the infinity Laplace equation with small right-hand side, relative to fixed boundary data. Our argument is a combination of the methods we develop here with the ``patching lemma" of Crandall, Gunnarsson, and Wang \cite[Theorem 2.1]{Crandall:2007}.

\begin{thm} \label{thm:dependence-zero}
For each $\gamma \geq 0$, let $u_\gamma \in C(\bar \Omega)$ denote the unique solution of the problem
\begin{equation*}
\left\{ \begin{aligned}
& - \Delta_\infty u_\gamma = \gamma & & \mbox{in} \ \Omega, \\
& u_\gamma = g & & \mbox{on} \ \partial \Omega.
\end{aligned} \right.
\end{equation*}
There exists a constant $C>0$, depending only on $\diam(\Omega)$, such that
\begin{equation}
\label{eq:dep-zero}
\| u_\gamma - u_0 \|_{L^\infty(\bar \Omega)} \leq C \gamma^{\frac{1}{3}} \left( 1 + \|  g \|_{L^\infty(\partial \Omega)}  \right).
\end{equation}
\end{thm}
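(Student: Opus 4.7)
The plan is to combine the finite-difference machinery developed earlier in this paper with an adaptation of the ``patching lemma'' of Crandall--Gunnarsson--Wang.

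Two preliminary reductions make the problem concrete. Since $-\Delta_\infty$ is $1$-homogeneous in $u$, the rescaling $u \mapsto u/(1+\|g\|_{L^\infty(\partial\Omega)})$ reduces matters to the case $\|g\|_{L^\infty(\partial\Omega)} \leq 1$, in which it suffices to prove $\|u_\gamma - u_0\|_{L^\infty(\bar\Omega)} \leq C\gamma^{1/3}$. Next, applying \THM{comparison-continuous} to $u_0$ and $u_\gamma$, which satisfy $-\Delta_\infty u_0 = 0 \leq \gamma = -\Delta_\infty u_\gamma$ with matching boundary data $g$, gives $u_0 \leq u_\gamma$ throughout $\bar\Omega$. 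Hence only the one-sided estimate $u_\gamma - u_0 \leq C\gamma^{1/3}$ remains.

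I then transfer to the finite-difference framework. For each small $\ep > 0$, let $u_\gamma^\ep$ and $u_0^\ep$ denote the unique continuous solutions of \EQ{existence} with $f \equiv \gamma$ and $f \equiv 0$ respectively, both with boundary data $g$; existence and uniqueness are supplied by \THM{existence} and \COR{comparison} since both right-hand sides are nonnegative. By \COR{uniqueness-convergence}, $u_\gamma^\ep \to u_\gamma$ and $u_0^\ep \to u_0$ uniformly on $\bar\Omega$, so it is enough to bound $u_\gamma^\ep - u_0^\ep \leq C\gamma^{1/3}$ uniformly in small $\ep$.

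The heart of the argument is the construction of a discrete upper barrier for $u_\gamma^\ep$. For a scale $\delta > 0$ to be chosen, set $w_\delta(x) := \sup_{y \in \bar\Omega(x,\delta)} u_0(y)$; then $0 \leq w_\delta - u_0 \leq L\delta$ on a suitable subdomain, where $L$ is an interior Lipschitz bound for $u_0$ (of order one under the normalization $\|g\|_\infty \leq 1$, by a comparison-with-cones argument). By \PROP{continuous-sup-convolution}, $w_\delta$ is a finite-difference subsolution of $-\Delta_\infty^\ep w_\delta \leq 0$. Following the Crandall--Gunnarsson--Wang patching construction, one augments $w_\delta$ by a radial quadratic corrector centered at a maximizer of $u_\gamma^\ep - u_0^\ep$, calibrated so that the resulting function $W$ is a finite-difference supersolution of $-\Delta_\infty^\ep W \geq \gamma$ with $W \geq g$ on $\partial\Omega$. \COR{comparison} then forces $u_\gamma^\ep \leq W$ on $\bar\Omega$, producing a schematic bound
\begin{equation*}
u_\gamma^\ep - u_0^\ep \ \leq \ C_1\,L\,\delta \ + \ C_2\,\gamma\,\delta^{-2} \ + \ \eta(\ep),
\end{equation*}
with $\eta(\ep) \to 0$ as $\ep \to 0$. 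Optimizing $\delta \sim \gamma^{1/3}$ balances the two $\delta$-dependent terms at size $\gamma^{1/3}$, and the continuum statement follows by passing to $\ep \to 0$.

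The main obstacle is precisely engineering the corrector so that its contribution to $-\Delta_\infty^\ep$ is $\gamma$ with an error scaling like $\gamma\,\delta^{-2}$: this is what delivers the $1/3$ exponent rather than a weaker one, and requires carefully adapting the Crandall--Gunnarsson--Wang absorption scheme to the finite-difference operator $\Delta_\infty^\ep$ while maintaining the boundary inequality $W \geq g$ uniformly up to $\partial\Omega$ and controlling the interplay between the $\ep$ and $\delta$ scales.
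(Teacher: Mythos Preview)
Your proposal has genuine gaps in the central construction.

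First, there is a direction error. You set $w_\delta(x)=\sup_{\bar\Omega(x,\delta)}u_0$ and invoke \PROP{continuous-sup-convolution} to conclude $-\Delta_\infty^\ep w_\delta\le 0$. That proposition only matches the ball radius to the step size (so you need $\delta=\ep$), and more importantly it yields a \emph{subsolution}. To bound $u_\gamma^\ep$ from above via \COR{comparison} you need a \emph{supersolution} of $-\Delta_\infty^\ep W\ge\gamma$; a max-over-balls of $u_0$ is the wrong object for that.

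Second, the ``radial quadratic corrector'' step cannot work as written. The operator $-\Delta_\infty^\ep$ is nonlinear and not additive, so augmenting a function by $-\tfrac{\gamma}{2}|x-x_0|^2$ does not shift its right-hand side from $0$ to $\gamma$. At best such an argument succeeds where the gradient of the base function dominates that of the quadratic, but $u_0$ (or $w_\delta$) can have vanishing gradient, and you have no mechanism to rule this out. This is precisely where the naive approach stalls and where the paper's machinery enters.

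The paper's proof has the same high-level architecture (discretize, compare, optimize a scale to get $\gamma^{1/3}$) but the two key ingredients are different from what you wrote. One invokes the Crandall--Gunnarsson--Wang patching lemma not as a ``corrector'' but to produce an infinity-\emph{super}harmonic function $v$ with $|v-u_0|\le\delta\operatorname{diam}(\Omega)$ and, crucially, a uniform lower bound $L(v,\cdot)\ge\delta$ on the local Lipschitz constant. Then one takes $v_\ep=\min_{\bar B(\cdot,\ep)}v$ (a discrete supersolution by \PROP{continuous-sup-convolution}) and applies the nonlinear strictness transformation of \LEM{super-strict}, namely $w_\ep=(1+2\gamma^{1/3})v_\ep-\tfrac12\gamma^{1/3}v_\ep^2$, which converts $-\Delta_\infty^\ep v_\ep\ge 0$ into $-\Delta_\infty^\ep w_\ep\ge\gamma^{1/3}(S_\ep^- v_\ep)^2\ge\gamma^{1/3}\delta^2$. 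The lower gradient bound from the patching lemma is exactly what makes $(S_\ep^- v_\ep)^2\ge\delta^2$ hold. One then argues by contradiction: if $A:=\gamma^{-1/3}\max(u_\gamma-u_0)$ exceeds $C(\operatorname{diam}\Omega)$, the choice $\delta=\tfrac12 A\gamma^{1/3}/\operatorname{diam}(\Omega)$ forces $\gamma^{1/3}\delta^2\ge\gamma$, and \PROP{no-extrema-comparison-ep-thick} applied to $u_\gamma^\ep:=\max_{\bar B(\cdot,\ep)}u_\gamma$ versus $w_\ep$ gives a contradiction. The balance producing the exponent $1/3$ is between $\|w_\ep-v_\ep\|_\infty\le\tfrac32\gamma^{1/3}$ and the requirement $\gamma^{1/3}\delta^2\ge\gamma$, not between $L\delta$ and $\gamma\delta^{-2}$.
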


As an application of \THM{dependence-zero}, we deduce an upper bound for the expected duration of a game of boundary-biased tug-of-war.

\begin{cor}\label{cor:duration-game}
For any given $\delta, \ep > 0$, in boundary-biased $\ep$-step tug-of-war with no running payoff, Player I has a strategy that achieves an expected payoff of at least $V^\ep_{I}(x_0) - \delta$, for any initial point $x_0\in \Omega$, and for which the expected number of stages it takes for the game to terminate is less than $C\delta^{-3} \ep^{-2}$. The constant $C> 0$ depends only on the oscillation $\max_{\partial \Omega}  g - \min_{\partial \Omega} g$ of the boundary data and the domain $\Omega$.
\end{cor}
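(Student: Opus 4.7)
The plan is to penalize delay by moving to the modified boundary-biased $\ep$-step game with a small constant negative running payoff $-\gamma$. \COR{comparison} and \THM{existence} give this game a unique value $V^\ep_{-\gamma}$, and the quantitative dependence of the value on $\gamma$ comes from \THM{dependence-zero}. Fix $\delta\in(0,1)$ and $x_0\in\Omega$; since the game dynamics and values are invariant under adding a constant to $g$, we may assume $\min_{\partial\Omega}g=0$, so $\|g\|_{L^\infty(\partial\Omega)}=\mathrm{osc}(g):=\max_{\partial\Omega}g-\min_{\partial\Omega}g$.

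The first step is to establish the discrete analog of \THM{dependence-zero}, namely
\begin{equation*}
0 \;\leq\; V^\ep_I(x_0)-V^\ep_{-\gamma}(x_0)\;\leq\; C_0\,\gamma^{1/3}\bigl(1+\mathrm{osc}(g)\bigr).
\end{equation*}
The left inequality is immediate, since the $-\gamma$ penalty is strictly unfavorable for Player I. The right inequality adapts the Crandall--Gunnarsson--Wang patching argument used to prove \THM{dependence-zero} to the finite difference equation \EQ{finite-difference-infinity-laplace}, using \COR{comparison} in place of continuum comparison, and verifying that the explicit radial barriers used in the patching construction are sub/supersolutions of $-\Delta^\ep_\infty$ by direct computation. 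Choose $\gamma=c\delta^{3}(1+\mathrm{osc}(g))^{-3}$ with $c$ small enough that $C_0\gamma^{1/3}(1+\mathrm{osc}(g))\leq\delta/2$.

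Second, pick a $(\delta/4)$-optimal strategy $\sigma^*$ for Player I in the modified game. For any strategy $\sigma_{II}$ of Player II, letting $\{x_j\}_{j=0}^{T}$ be the random trajectory and $T$ the termination step, the definition of $\sigma^*$ gives
\begin{equation*}
\E^{\sigma^*,\sigma_{II}}_{x_0}\!\Big[g(x_{T})-\tfrac{\ep\gamma}{2}\sum_{j=1}^{T}\distep{\ep}{x_{j-1}}{x_j}\Big] \;\geq\; V^\ep_{-\gamma}(x_0)-\delta/4.
\end{equation*}
Dropping the nonnegative subtracted term from the left and combining with the first-step estimate yields $\E[g(x_T)]\geq V^\ep_I(x_0)-\delta$, which is the claimed near-optimality. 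Rearranging instead and using $\E[g(x_T)]\leq\mathrm{osc}(g)$ gives $\tfrac{\ep\gamma}{2}\E\bigl[\sum_{j=1}^{T}\distep{\ep}{x_{j-1}}{x_j}\bigr]\leq \mathrm{osc}(g)+\delta$. For every $j<T$ the endpoints $x_{j-1},x_j$ both lie in $\Omega$, so $\distep{\ep}{x_{j-1}}{x_j}=\max\{d(x_{j-1},x_j),\ep\}\geq\ep$; hence $\sum_{j=1}^{T}\distep{\ep}{x_{j-1}}{x_j}\geq\ep(T-1)$, and with $\gamma\asymp\delta^3$,
\begin{equation*}
\E[T]\;\leq\;1+\frac{2\bigl(\mathrm{osc}(g)+\delta\bigr)}{\ep^{2}\gamma}\;\leq\; C\delta^{-3}\ep^{-2},
\end{equation*}
with $C$ depending only on $\mathrm{osc}(g)$ and $\diam(\Omega)$. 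In particular $\sigma^*$ does terminate the game almost surely.

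The hard part is the discrete estimate of the first step. Because the continuum proof of \THM{dependence-zero} uses only comparison and simple radial barriers---and \COR{comparison} together with direct computation handle both at the discrete level---this adaptation is expected to be essentially routine, but it is the one ingredient not taken verbatim from earlier results in the excerpt.
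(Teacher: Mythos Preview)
The paper does not actually give a proof of this corollary: it explicitly says ``We do not give the proof of \COR{duration-game} here,'' and instead attributes the argument to an observation of Peres, Pete, and Somersille \cite{Peres:preprint}, noting that the connection is precisely through \THM{dependence-zero}. Your penalty-game argument---introduce a constant running payoff $-\gamma$, use a near-optimal Player~I strategy in the modified game, and read off both the payoff guarantee and the expected-duration bound from the two sides of the resulting inequality---is exactly that observation, so your approach coincides with the one the paper points to.

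Your identification of the discrete analogue of \THM{dependence-zero} as the one substantive missing ingredient is accurate and honest. The paper's own remark immediately following the proof of \THM{dependence-zero} confirms that the $\ep$-independent estimate $\|u^\ep_\gamma - u^\ep_0\|_{L^\infty}\leq C\gamma^{1/3}$ is obtainable ``by using a finite difference version of the patching lemma,'' though, like you, it does not carry this out. One small point: almost-sure termination under $\sigma^*$ is not circular---it follows immediately from $\inf_{\sigma_{II}}F_I(\sigma^*,\sigma_{II})\geq V^\ep_{-\gamma}(x_0)-\delta/4>-\infty$ and the convention $F_I=-\infty$ on non-terminating play, so you may invoke the expectation formula before, rather than after, deducing the bound on $\E[T]$.
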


The connection between \THM{dependence-zero} and \COR{duration-game} follows from an observation of Peres, Pete, and Somersille \cite[Proposition 7.1]{Peres:preprint}, who proved \COR{duration-game} with upper bound $C(\delta) \ep^{-2}$, using a stability result of Lu and Wang \cite{Lu:2008}. We do not give the proof of \COR{duration-game} here, and instead refer the reader to the discussion in \cite{Peres:preprint}.

Let us mention that we can generalize \COR{duration-game} to any running payoff function $f\in C(\Omega) \cap L^\infty(\Omega)$ for which we have $\underline u_\ep \rightarrow \underline u$. In this case, we deduce an upper bound of the form $C(\delta,f,g) \ep^{-2}$ on the expected number of stages before termination, in boundary-biased $\ep$-step tug-of-war, for some fixed Player I strategy that is expected win at least $V^\ep_{I} - \delta$.

\subsection*{Overview of this article}
\SEC{three} is devoted to the study of the finite difference equation $-\Delta_\infty^\ep u = f$, where we prove Theorems \ref{thm:no-extrema-comparison}, \ref{thm:existence}, and \ref{thm:small-f-uniqueness} and study the regularity of solutions. In \SEC{four} we give our first proof of \THM{convergence}. In \SEC{five}, we apply techniques developed in the previous sections to the continuum equation $-\Delta_\infty u = f$. We give a second proof of \THM{convergence}, a new elementary proof of the uniqueness of infinity harmonic functions (a special case of \THM{comparison-continuous}), and prove Theorems \ref{thm:stability}, \ref{thm:existence-continuous}, \ref{thm:uniqueness-generic}, and \ref{thm:uniqueness-generic-discrete}. \SEC{six} is devoted to the relationship between continuous dependence of solution of the finite difference equation and uniqueness for the continuum equation. There we complete the proof of \THM{comparison-continuous} and prove \THM{dependence-zero}, as well as obtain explicit estimates for the rate of convergence, as $\ep \to 0$, of solutions of the finite difference equation to those of the continuum equation. In \SEC{seven} we highlight some interesting open problems.

%%%%%%%%%%%%%%%%%%%%%%%%%%%%%%%%%%%%%%%%%%%%%%%%
%%%%%%%%%%%%%%%%%%%%%%%%%%%%%%%%%%%%%%%%%%%%%%%%
%%%%%%%%%%%%%%%%%%%%%%%%%%%%%%%%%%%%%%%%%%%%%%%%
%%%%%%%%%%%%%%%%%%%%%%%%%%%%%%%%%%%%%%%%%%%%%%%%
%%%%%%%%%%%%%%%%%%%%%%%%%%%%%%%%%%%%%%%%%%%%%%%%
%%%%%%%%%%%%%%%%%%%%%%%%%%%%%%%%%%%%%%%%%%%%%%%%
%%%%%%%%%%%%%%%%%%%%%%%%%%%%%%%%%%%%%%%%%%%%%%%%

\section{The finite difference infinity Laplacian}\label{sec:three}

In this section, we study the solutions $u$ of the difference equation
\begin{equation}
- \Delta_\infty^\ep u = f \mbox{ in } \Omega
\end{equation}
subject to the Dirichlet boundary condition
\begin{equation}
u = g \quad \mbox{on}  \ \partial \Omega.
\end{equation}

\begin{remark}\label{rem:strict-comparison-discrete}
We employ the following simple observation many times in this section. If $u,v:\bar\Omega \to \R$ are bounded functions and $x_0 \in \Omega$ is such that
\begin{equation*}
(u-v)(x_0)  = \sup_{\bar \Omega(x,\ep)} (u-v),
\end{equation*}
then
\begin{equation*}
S^+_\ep u(x_0) \leq S^+_\ep v(x_0) \quad \mbox{and} \quad S^-_\ep u(x_0) \geq S^-_\ep v(x_0).
\end{equation*}
In particular,
\begin{equation*}
-\Delta_\infty^\ep u(x_0) \geq -\Delta_\infty^\ep v(x_0).
\end{equation*}
\end{remark}

Generalizing an argument of Le Gruyer \cite{LeGruyer:2007}, who established the uniqueness of solutions of a difference equation on a finite graph, we prove \THM{no-extrema-comparison}.

\begin{proof}[{\bf Proof of \THM{no-extrema-comparison} }]
Assume that $u,-v\in \USC(\bar\Omega)$ satisfy the inequality
\begin{equation}\label{eq:no-extrema-comparison-eq1}
-\Delta_\infty^\ep u \leq -\Delta_\infty^\ep v \quad \mbox{in} \ \Omega,
\end{equation}
but
\begin{equation}
m: = \max_{\bar\Omega} (u-v) > \max_{\partial \Omega} (u-v).
\end{equation}
By symmetry, we need to show only that $u$ has a strict $\ep$-local maximum in $\Omega$.
Define the set
\begin{equation*}
E:= \left\{ x\in \bar\Omega : (u-v) (x) = m \right\}.
\end{equation*}
The set $E$ is nonempty, closed, and contained in $\Omega$. Let $l := \max_E u$. Since $u$ is upper semicontinuous, the set
\begin{equation*}
F:= \{ x \in E : u(x) = l \}
\end{equation*}
is nonempty and closed. From \REM{strict-comparison-discrete} and the inequality \EQ{no-extrema-comparison-eq1}, we see that
\begin{equation}\label{eq:no-extrema-eq-1}
S^-_\ep u(x) = S^-_\ep v(x) \quad \mbox{and} \quad S^+_\ep u(x) = S^+_\ep v(x) \quad \mbox{for every} \ x \in E.
\end{equation}

We claim that every point of $F$ is a strict $\ep$-local maximum of $u$. We need to show that 
\begin{equation*}
u(y) < l \quad \mbox{for every} \ y \in F^\ep \setminus F.
\end{equation*}
Suppose on the contrary that there is a point $y \in \bar \Omega \setminus F$ such that $\dist(y,F) \leq \ep$ and $u(y) \geq l$. It follows that $y \not \in E$, by the definition of $F$. Pick $x \in F$ with $y \in \bar \Omega(x,\ep)$. By reselecting $y$, if necessary, we may assume that
\begin{equation*}
S^+_\ep u(x) = \frac{u(y) - u(x)}{\rho_\ep(x,y)}.
\end{equation*}
Since $y\not \in E$, we see that
\begin{equation*}
u(y) - v(y) < m = u(x) - v(x). 
\end{equation*}
Thus
\begin{equation*}
S^+_\ep u(x) = \frac{u(y) - u(x)}{\rho_\ep(x,y)} < \frac{v(y) - v(x)}{\rho_\ep(x,y)} \leq S^+_\ep v(x).
\end{equation*}
This contradicts the second equality of \EQ{no-extrema-eq-1}, completing the proof.
\end{proof}

In order to obtain \COR{comparison} from \THM{no-extrema-comparison}, we need the following lemma.

\begin{lem}\label{lem:no-extrema-no-extrema}
Suppose that $u \in \USC(\bar \Omega)$ satisfies the inequality
\begin{equation}\label{eq:no-extrema-no-extrema-eq-1}
-\Delta^\ep_\infty u \leq 0 \quad \mbox{in} \ \Omega.
\end{equation}
Then $u$ has no strict $\ep$-local maximum in $\Omega$. 
\end{lem}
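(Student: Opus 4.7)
The plan is to argue by contradiction. The key idea is that the hypothesis forces any ``argmax set'' of $u$ inside a candidate $F$ to be closed under taking $\ep$-neighborhoods, after which a propagation/connectedness argument drives a contradiction. Suppose, toward contradiction, that $u$ has a strict $\ep$-local maximum in $\Omega$, witnessed by a closed set $F \subseteq \Omega$ and a point $x \in F$ with
\begin{equation*}
l := u(x) = \sup_F u > u(y) \quad \text{for every } y \in F^\ep \setminus F.
\end{equation*}
Consider $M := \{z \in F : u(z) = l\}$, which is nonempty (it contains $x$) and, by the upper semicontinuity of $u$, closed.

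The heart of the argument is the following claim: for every $z \in M$, the entire ball $\bar\Omega(z,\ep)$ is contained in $M$. To see this, I would fix $z \in M$. Since $z \in F$, every $y \in \bar\Omega(z,\ep)$ satisfies $\dist(y,F) \leq d(y,z) \leq \ep$ and thus lies in $F^\ep$; hence $u(y) \leq l$, with strict inequality when $y \in F^\ep \setminus F$. Therefore every ratio in the definition of $S^+_\ep u(z)$ is nonpositive, giving $S^+_\ep u(z) \leq 0$, while taking $y = z$ in the definitions gives $S^+_\ep u(z) \geq 0$ and $S^-_\ep u(z) \geq 0$. Combining these bounds with the hypothesis $-\Delta_\infty^\ep u(z) \leq 0$, which reads $S^-_\ep u(z) \leq S^+_\ep u(z)$, forces
\begin{equation*}
S^+_\ep u(z) = S^-_\ep u(z) = 0.
\end{equation*}
The vanishing of $S^-_\ep u(z)$ forces $u(y) \geq l$ on all of $\bar\Omega(z,\ep)$; combined with the earlier bound $u(y) \leq l$, this yields $u \equiv l$ on $\bar\Omega(z,\ep)$. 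In particular $\bar\Omega(z,\ep)$ cannot meet $F^\ep \setminus F$ (where $u < l$), so $\bar\Omega(z,\ep) \subseteq F$ and hence $\bar\Omega(z,\ep) \subseteq M$, proving the claim.

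To finish, set $M^\ep := \{y \in \bar\Omega : \dist(y,M) \leq \ep\}$. The claim says $M^\ep \subseteq M$, so a trivial induction gives $M = M^{k\ep}$ for every positive integer $k$. Because $\bar\Omega$ is path-connected and $\diam(\Omega) < \infty$, choosing $k$ with $k\ep \geq \diam(\Omega)$ forces $M^{k\ep} = \bar\Omega$, and hence $M = \bar\Omega$. This contradicts $M \subseteq F \subseteq \Omega \subsetneq \bar\Omega$ and completes the proof. The only delicate step is the extraction of $\bar\Omega(z,\ep) \subseteq M$ from the single scalar inequality at $z$; once that ``$\ep$-openness'' of $M$ is in hand, the final contradiction is immediate from connectedness.
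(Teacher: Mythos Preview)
Your proof is correct, and it is close in spirit to the paper's argument: both introduce the same set $M=E=\{z\in F:u(z)=l\}$ and both use that $S^+_\ep u(z)=0$ for every $z\in M$. The difference is in how the contradiction is extracted. You push further and show that $S^-_\ep u(z)=0$ as well, deduce $u\equiv l$ on $\bar\Omega(z,\ep)$, and hence $\bar\Omega(z,\ep)\subseteq M$; then you iterate $\ep$-thickenings and use connectedness of $\bar\Omega$ to force $M=\bar\Omega$, contradicting $M\subseteq F\subseteq\Omega$. The paper instead picks a single point $y\in\partial E$ (which exists since $E$ is a nonempty proper closed subset of the connected set $\bar\Omega$) and observes directly that near such a $y$ there is some $w\in\bar\Omega(y,\ep)$ with $u(w)<l$, giving $S^-_\ep u(y)>0=S^+_\ep u(y)$ and hence an immediate contradiction to the hypothesis at $y$.

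The paper's route is shorter because it only needs \emph{one} bad point on $\partial E$, whereas you prove the stronger intermediate statement that $M$ is $\ep$-open (equivalently, open in $\bar\Omega$). Your final step could in fact be streamlined: once you know $\bar\Omega(z,\ep)\subseteq M$ for each $z\in M$, the set $M$ is open in $\bar\Omega$, and being also closed and nonempty in the connected space $\bar\Omega$ it must equal $\bar\Omega$; the explicit $M^{k\ep}$ iteration is not needed.
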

\begin{proof}
Suppose on the contrary that $u$ has a strict $\ep$-local maximum at $x_0 \in \Omega$. Select a nonempty closed set $F \subseteq \Omega$ which contains $x_0$ and for which
\begin{equation}\label{eq:no-extrema-no-extrema-eq-2}
u(x_0) = \max_F u > u(y) \quad \mbox{for every} \ y \in F^\ep \setminus F.
\end{equation}
Set $E := \{ y \in F : u(y) = u(x_0) \}$. Then $E$ is nonempty, closed, and for any $y \in \partial E$ we have
\begin{equation*}
S^+_\ep u(y) = 0 < S^-_\ep u(y),
\end{equation*}
a contradiction to \EQ{no-extrema-no-extrema-eq-1}.
\end{proof}

By an argument similar to the proofs of \THM{no-extrema-comparison} and \LEM{no-extrema-no-extrema}, we obtain the following proposition.

\begin{prop}\label{prop:no-extrema-comparison-ep-thick}
Assume $u, -v \in \USC(\Omega)$ and $\ep > 0$ satisfy the inequality
\begin{equation}
-\Delta^\ep_\infty u \leq - \Delta^\ep_\infty v \quad \mbox{in} \ \Omega_\ep.
\end{equation}
Suppose that $u$ has no strict $\ep$-local maximum in $\Omega_\ep$, or $v$ has no strict $\ep$-local minimum in $\Omega_\ep$. Then
\begin{equation}\label{eq:no-extrema-comparison-ep-thick-eq}
\sup_{\Omega} (u-v) = \sup_{\Omega \setminus \Omega_\ep} (u-v).
\end{equation}
In particular, \EQ{no-extrema-comparison-ep-thick-eq} holds provided that $-\Delta^\ep_\infty u \leq 0$ or $-\Delta^\ep_\infty v \geq 0$ in $\Omega_\ep$.
\end{prop}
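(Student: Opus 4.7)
The plan is to adapt the proof of Theorem \ref{thm:no-extrema-comparison} to the thickened-boundary setting. The key observation is that a strict gap between $\sup_\Omega(u-v)$ and $\sup_{\Omega\setminus\Omega_\ep}(u-v)$ forces the relevant maximizing set to lie inside $\Omega_\ep$, where the hypothesized inequality applies; from there the verbatim argument of Theorem \ref{thm:no-extrema-comparison} produces the required contradiction.

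By symmetry, I may assume $u$ has no strict $\ep$-local maximum in $\Omega_\ep$, and suppose for contradiction that $m := \sup_\Omega(u-v) > s := \sup_{\Omega\setminus\Omega_\ep}(u-v)$. Because $\Omega$ is bounded and $\ep > 0$, the set $\bar{\Omega_\ep}$ is a compact subset of $\Omega$, and since $u - v \in \USC(\Omega)$ the supremum over $\bar{\Omega_\ep}$ is attained and equals $m$. As $\bar{\Omega_\ep} \setminus \Omega_\ep \subseteq \Omega \setminus \Omega_\ep$, where values are bounded above by $s < m$, the set
\[
E := \{x \in \bar{\Omega_\ep} : (u-v)(x) = m\}
\]
is a nonempty closed subset of $\Omega_\ep$.

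From here one follows the proof of Theorem \ref{thm:no-extrema-comparison} verbatim. Set $l := \max_E u$ (attained by compactness of $E$ and upper semicontinuity of $u$), and $F := \{x \in E : u(x) = l\}$, a nonempty closed subset of $\Omega_\ep$. Since $(u-v)(x) = m$ achieves the global supremum of $u - v$ on $\bar\Omega(x,\ep)$ for each $x \in E$, Remark \ref{rem:strict-comparison-discrete} combined with the inequality $-\Delta_\infty^\ep u(x) \leq -\Delta_\infty^\ep v(x)$ at $x \in E \subseteq \Omega_\ep$ yields $S^+_\ep u(x) = S^+_\ep v(x)$ and $S^-_\ep u(x) = S^-_\ep v(x)$. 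The same final step as in Theorem \ref{thm:no-extrema-comparison}---supposing $y \in F^\ep \setminus F$ with $u(y) \geq l$, observing that necessarily $y \notin E$, choosing $y$ as an $S^+_\ep u(x)$-maximizer for some $x \in F$, and deriving $S^+_\ep u(x) < S^+_\ep v(x)$---produces a contradiction. Hence every point of $F \subseteq \Omega_\ep$ is a strict $\ep$-local maximum of $u$, contradicting the hypothesis.

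For the ``In particular'' assertion, it suffices to establish the $\Omega_\ep$-analogue of \LEM{no-extrema-no-extrema}: the inequality $-\Delta_\infty^\ep u \leq 0$ in $\Omega_\ep$ precludes a strict $\ep$-local maximum of $u$ in $\Omega_\ep$. Given such a hypothetical strict max at $x_0 \in \Omega_\ep$ with witness $F \subseteq \Omega$, set $E := \{y \in F : u(y) = u(x_0)\}$ and consider two cases. If some $y^* \in E \cap \Omega_\ep$ satisfies $\bar\Omega(y^*,\ep) \not\subseteq E$, then---as in the proof of \LEM{no-extrema-no-extrema}---any $z \in \bar\Omega(y^*,\ep) \cap (F^\ep \setminus E)$ has $u(z) < u(y^*)$, so $S^+_\ep u(y^*) = 0 < S^-_\ep u(y^*)$, contradicting the PDE at $y^* \in \Omega_\ep$. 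Otherwise $E$ absorbs its $\ep$-ball at every point of $E \cap \Omega_\ep$; iterating this absorption along $\ep$-chains starting from $x_0$ forces $u \equiv u(x_0)$ throughout a large portion of $\Omega$, eventually on a set that intersects $F^\ep \setminus F$ and violates the strictness inequality $u(x_0) > u(y)$ there. The main obstacle is precisely this last step---confining all relevant points to $\Omega_\ep$ so that the differential inequality remains applicable; the strict gap $m > s$ handles this cleanly in the main comparison, while the ``In particular'' part requires the two-case analysis just sketched.
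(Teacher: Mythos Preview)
Your argument for the main comparison assertion is correct and is precisely the adaptation of Theorem~\ref{thm:no-extrema-comparison} that the paper has in mind: the strict gap $m>s$ forces $E\subseteq\Omega_\ep$, after which the proof of Theorem~\ref{thm:no-extrema-comparison} runs verbatim.

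The gap is in your treatment of the ``In particular'' clause. You try to prove the general $\Omega_\ep$-analogue of Lemma~\ref{lem:no-extrema-no-extrema}, namely that $-\Delta_\infty^\ep u\le 0$ in $\Omega_\ep$ rules out any strict $\ep$-local maximum at a point of $\Omega_\ep$ with \emph{arbitrary} witness $F\subseteq\Omega$. Your Case~2 argument does not do this: the absorption $\bar\Omega(y,\ep)\subseteq E$ is available only at points $y\in E\cap\Omega_\ep$, so the iterated set you build from $x_0$ remains inside $E\subseteq F$ at every stage and therefore never intersects $F^\ep\setminus F$. The sentence ``eventually on a set that intersects $F^\ep\setminus F$'' is unjustified, and the iteration can simply stall once it reaches $E\setminus\Omega_\ep$ (think of $\Omega_\ep$ disconnected, or a dumbbell domain with a thin neck).

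The clean fix---and the one the paper intends by ``an argument similar to the proofs of Theorem~\ref{thm:no-extrema-comparison} and Lemma~\ref{lem:no-extrema-no-extrema}''---is not to prove the general lemma at all, but to apply the Lemma~\ref{lem:no-extrema-no-extrema} argument to the \emph{specific} set $F$ you already constructed in the main part. That $F$ satisfies $F\subseteq\Omega_\ep$, it is nonempty and closed, $u\equiv l$ on $F$, and $u<l$ on $F^\ep\setminus F$. Since $F$ is a proper closed subset of the connected set $\Omega$, there exists $x\in F$ with $\bar\Omega(x,\ep)\not\subseteq F$; at such $x$ one has $S_\ep^+u(x)=0<S_\ep^-u(x)$, and because $x\in F\subseteq\Omega_\ep$ the hypothesis $-\Delta_\infty^\ep u(x)\le 0$ gives the contradiction directly.
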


Recall that the \emph{upper semicontinuous envelope} $u^*$ of a bounded function $u: \bar\Omega \to \R$ is defined by
\begin{equation*}
u^*(x) := \limsup_{y\to x} u(y),\quad x\in \bar\Omega.
\end{equation*}
The \emph{lower semicontinuous envelope} of $u$ is $u_* : = - ( -u)^*$. The function $u^*$ is upper semicontinuous, $u_*$ is lower semicontinuous, and $u_* \leq u \leq u^*$.

%%%%%%%%%
%%%%%%%%%
%%%%%%%%%

\begin{lem}\label{lem:uppersemi}
Suppose $h\in \USC(\Omega)$ and $u:\bar \Omega \to \R$ are bounded from above and satisfy the inequality
\begin{equation}
\label{eq:uppersemi-subsol}
- \Delta_\infty^\ep u \leq h  \mbox{ in } \Omega.
\end{equation}
Then $u^*$  also satisfies \EQ{uppersemi-subsol}. If in addition $u \leq g$ on $\partial \Omega$, then $u^* \leq g$ on $\partial \Omega$.
\end{lem}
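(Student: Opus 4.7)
The plan is to handle the interior inequality $-\Delta_\infty^\ep u^* \leq h$ in $\Omega$ and the boundary inequality $u^* \leq g$ on $\partial\Omega$ separately.

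\emph{Interior inequality.} I would fix $x_0 \in \Omega$ and choose a sequence $\{x_n\} \subseteq \Omega$ with $x_n \to x_0$ and $u(x_n) \to u^*(x_0)$. The hypothesis gives $S^-_\ep u(x_n) - S^+_\ep u(x_n) \leq \ep h(x_n)$ for each $n$, and upper semicontinuity of $h$ makes the $\limsup$ of the right side at most $\ep h(x_0)$. It therefore suffices to establish the two semicontinuity estimates
\begin{equation*}
\liminf_n S^-_\ep u(x_n) \geq S^-_\ep u^*(x_0), \qquad \limsup_n S^+_\ep u(x_n) \leq S^+_\ep u^*(x_0).
\end{equation*}
For the first, I pick an approximate maximizer $y^- \in \bar\Omega(x_0,\ep)$ of $S^-_\ep u^*(x_0)$ and construct $y_n \in \bar\Omega(x_n,\ep)$ with $y_n \to y^-$, retracting $y_n$ slightly inward along a near-geodesic from $x_n$ in the degenerate case $d(x_0,y^-) = \ep$; the estimate then follows from $u(x_n)\to u^*(x_0)$, from $\limsup u(y_n) \leq \limsup u^*(y_n) \leq u^*(y^-)$ (using $u \leq u^*$ and USC of $u^*$), and from the convergence $\rho_\ep(x_n,y_n) \to \rho_\ep(x_0,y^-)$. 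The second estimate is the dual statement: for near-maximizers $z_n$ of $S^+_\ep u(x_n)$, pass to a subsequential limit $z^* \in \bar\Omega(x_0,\ep)$, observe $\liminf_n \rho_\ep(x_n,z_n) \geq \rho_\ep(x_0,z^*)$, and apply $\limsup u(z_n) \leq u^*(z^*)$.

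\emph{Boundary inequality.} Suppose for contradiction that $\eta := u^*(x_0) - g(x_0) > 0$ at some $x_0 \in \partial\Omega$, and choose $x_n \to x_0$ with $u(x_n) \to u^*(x_0)$; continuity of $g$ together with $u = g$ on $\partial \Omega$ forces $x_n \in \Omega$ eventually. The point $y = x_0$ is admissible in $S^-_\ep u(x_n)$ for $n$ large, and since $\rho_\ep(x_n,x_0) = d(x_n,x_0)$ by the boundary convention in the definition of $\rho_\ep$,
\begin{equation*}
S^-_\ep u(x_n) \geq \frac{u(x_n) - g(x_0)}{d(x_n,x_0)} \to +\infty.
\end{equation*}
Meanwhile, a short case analysis using the upper bound on $u$, the lower bound $\rho_\ep \geq \ep$ for interior test points, and continuity of $g$ at $x_0$ (which makes nearby boundary test points contribute nonpositively to $S^+_\ep u(x_n)$) shows that $S^+_\ep u(x_n)$ stays bounded in $n$. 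This contradicts $S^-_\ep u(x_n) - S^+_\ep u(x_n) \leq \ep h(x_n) \leq \ep \sup_\Omega h$, using here that $h$ is bounded above.

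The main obstacle is the boundary step: it hinges on the boundary-biased feature of $\rho_\ep$ -- that jumps to $\partial\Omega$ are measured by raw $d$ rather than $\max(d,\ep)$ -- which is precisely what forces $S^-_\ep u(x_n) \to \infty$ when $u$ is pushed above its boundary values near $\partial\Omega$. The interior estimates are standard in spirit but require the retraction device near the edge of the path ball $\bar\Omega(x_0,\ep)$, since the supremum defining $S^-_\ep u^*(x_0)$ need not be attained and $\rho_\ep$ has a jump across $\partial\Omega$.
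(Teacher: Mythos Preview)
Your proof is correct and follows essentially the same route as the paper: prove the two one-sided estimates $\limsup_n S^+_\ep u(x_n) \leq S^+_\ep u^*(x_0)$ and $\liminf_n S^-_\ep u(x_n) \geq S^-_\ep u^*(x_0)$ for the interior inequality, and for the boundary argue by contradiction that $S^-_\ep u(x_n)\to+\infty$ while $S^+_\ep u(x_n)$ remains bounded, exploiting the boundary convention in $\rho_\ep$.

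Two small remarks. First, the hypothesis is $u\leq g$ on $\partial\Omega$, not $u=g$; your argument goes through unchanged with the inequality. Second, in your case analysis bounding $S^+_\ep u(x_n)$ you cover interior test points (where $\rho_\ep=\ep$) and boundary test points near $x_0$ (where continuity of $g$ makes the contribution nonpositive), but you should also account for boundary test points $y\in\partial\Omega\cap\bar\Omega(x_n,\ep)$ with $d(x_n,y)$ bounded away from zero---for these $\rho_\ep(x_n,y)=d(x_n,y)$ is bounded below, so the contribution is bounded by the same mechanism as for interior points. The paper handles this by fixing a small $\delta>0$ and splitting according to whether $d(x_0,y)\leq\delta$.
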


\begin{proof}
Fix $x \in \Omega$, and let $\{ x_k \}_{k=1}^\infty \subseteq \Omega$ be a sequence converging to $x$ as $k \to \infty$ and for which
\begin{equation*}
u(x_k) \rightarrow u^*(x).
\end{equation*}
We claim that
\begin{equation}
\label{eq:uppersemi-splus-lim}
S^+_\ep u^*(x) \geq  \limsup_{k \rightarrow \infty}  S^+_\ep u(x_k) .
\end{equation}
Fix $\delta > 0$, and select for each $k$ a point $y_k \in \bar \Omega(x_k, \ep)$ such that
\begin{equation}
\label{eq:uppersemi-splus}
S^+_\ep u(x_k) \leq \frac{u(y_k) - u(x_k)}{\rho_\ep(x_k, y_k)} + \delta.
\end{equation}
By taking a subsequence, we may assume that $y_k \rightarrow y \in \bar\Omega(x,\ep)$. In the case that
\begin{equation}
\label{eq:uppersemi-rho}
\rho_\ep(x_k, y_k) \rightarrow \rho_\ep(x, y),
\end{equation}
then we may pass to the limit $k\to \infty$ in \EQ{uppersemi-splus} to obtain
\begin{align*}
S^+_\ep u^*(x) \geq \frac{u^*(y) - u^*(x)}{\rho_\ep(x,y)} \geq \limsup_{k \rightarrow \infty} \frac{u(y_k) - u(x_k)}{\rho_\ep(x_k,y_k)} \geq \limsup_{k \rightarrow \infty}  S^+_\ep u(x_k)  - \delta.
\end{align*}
Sending $\delta \to 0$, we obtain \EQ{uppersemi-splus-lim} provided that \EQ{uppersemi-rho} holds. On the other hand, suppose that \EQ{uppersemi-rho} fails. Then $y \in \partial \Omega \cap \{ z: d(y,z) < \ep\}$ and $y_k \in \Omega(x, \ep) \setminus \partial \Omega$ for infinitely many $k$. By taking a subsequence, assume that $y_k \in \Omega(x, \ep) \setminus \partial \Omega$ for all $k$. Thus $\rho_\ep(x, y_k) = \rho_\ep(x_k, y_k) = \ep$ for every $k$, and we have
\begin{align*}
S^+_\ep u^*(x) & \geq \limsup_{k \rightarrow \infty} \frac{u^*(y_k) - u^*(x)}{\rho_\ep(x,y_k)} \geq \limsup_{k \rightarrow \infty} \frac{u(y_k) - u(x_k)}{\rho_\ep(x_k,y_k)}  \geq \limsup_{k \rightarrow \infty}  S^+_\ep u(x_k)  - \delta.
\end{align*}
Passing to the limit $\delta \to 0$, we have \EQ{uppersemi-splus-lim} also in the case that \EQ{uppersemi-rho} fails.

To derive a similar estimate for $S^-_\ep u^*(x)$, let $\delta > 0$ and select a point $z \in \bar \Omega(x, \ep)$ for which
\begin{equation*}
S^-_\ep u^*(x) \leq \frac{u^*(x) - u^*(z)}{\rho_\ep(x,z)} + \delta,
\end{equation*}
and select any $z_k \in \bar \Omega(x_k, \ep)$ such that $z_k \rightarrow z$. If $z\in \partial \Omega \cap \{ w : d(w,x) < \ep\}$, then we may select $z_k \in \partial \Omega$ for sufficiently large $k$. Then $\rho_\ep(x_k,z_k) \rightarrow \rho_\ep(x,z)$, and we obtain
\begin{equation*}
S^-_\ep u^*(x) \leq \frac{u^*(x) - u^*(z)}{\rho_\ep(x,z)} + \delta \leq \liminf_{k\to \infty} \frac{u(x_k) - u(z_k)}{\rho_\ep (x_k,z_k)} + \delta \leq \liminf_{k\to \infty} S^-_\ep u(x_k) + \delta.
\end{equation*}
Passing to the limit $\delta \to 0$, we derive
\begin{equation}
\label{eq:uppersemi-sminus-lim}
S^-_\ep u^*(x) \leq \liminf_{k \rightarrow \infty} S^-_\ep u(x_k).
\end{equation}

Combining \EQ{uppersemi-splus-lim} and \EQ{uppersemi-sminus-lim}, we obtain
\begin{equation*}
S^-_\ep u^*(x) - S^+_\ep u^*(x) \leq \liminf_{k \rightarrow \infty} \left( S^-_\ep u(x_k) - S^+_\ep u(x_k) \right) \leq \ep h(x).
\end{equation*}
That is,
\begin{equation*}
-\Delta_\infty^\ep u^*(x) \leq h(x) \quad \mbox{for each} \ x \in \Omega.
\end{equation*}

Suppose now that $u \leq g$ on $\partial \Omega$. We show that $u^* \leq g$ on $\partial \Omega$. Suppose on the contrary that $x_k \in \bar \Omega$ such that $x_k \rightarrow x \in \partial \Omega$ and
\begin{equation}\label{eq:uppersemi-bdry-eq-1}
\gamma:= \lim_{k\to\infty} u(x_k) - g(x) > 0.
\end{equation}
As $g$ is continuous and $u\leq g$ on $\partial \Omega$, by taking a subsequence we may assume that $x_k \in \Omega$. Moreover, \EQ{uppersemi-bdry-eq-1} and $u \leq g$ on $\partial \Omega$ imply that
\begin{equation*}
S^-_\ep u(x_k) \rightarrow \infty \quad \mbox{as} \quad k \rightarrow \infty.
\end{equation*}
By the continuity of $g$, there exist $0 < \delta \leq \ep$ and a large positive integer $K$ such that
\begin{equation*}
u(y) \leq g(y) \leq u(x_k) \quad \mbox{for every} \ y \in \partial \Omega\cap \{ z: d(x,z) \leq \delta\}, \ k \geq K.
\end{equation*}
Since $x_k \to x$ as $k\to\infty$, it follows that
\begin{equation*}
\limsup_{k \rightarrow \infty} S^+_\ep u(x_k)  \leq \limsup_{k \rightarrow \infty} \frac{1}{\delta} \left( \sup_{\bar\Omega} u - u(x_k)\right) \leq \frac{1}{\delta}\left( \sup_{\bar \Omega} u - g(x) \right).
\end{equation*}
Since $u$ is bounded above, the expression on the right side of the above inequality is finite. We deduce that
\begin{equation*}
-\ep \Delta_\infty^\ep u(x_k) = S^-_\ep u(x_k) - S^+_\ep u(x_k) \rightarrow +\infty.
\end{equation*}
Since $h$ is bounded above, we have a contradiction to \EQ{uppersemi-subsol}. Thus \EQ{uppersemi-bdry-eq-1} is impossible. It follows that $u^* \leq g$.
\end{proof}

%%%%%%%%%
%%%%%%%%%
%%%%%%%%%

\begin{lem}
\label{lem:cont}
Suppose that $u \in USC(\bar \Omega)$ satisfies
\begin{equation*}
- \Delta_\infty^\ep u = f \quad \mbox{in} \ \Omega,
\end{equation*}
and
\begin{equation*}
u(x_k) \rightarrow u(x) \quad \mbox{whenever} \ x_k \rightarrow x \in \partial \Omega.
\end{equation*}
Then $u \in C(\bar \Omega)$.
\end{lem}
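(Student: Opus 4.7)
The plan is to prove $u \in C(\bar\Omega)$ by showing that $u$ agrees with its lower semicontinuous envelope $u_*$ on $\bar\Omega$; since $u \in \USC(\bar\Omega)$ already yields $u = u^*$, the identity $u = u_*$ forces $u$ to be continuous.

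First I would show that $u_*$ is a supersolution with boundary values $g$. The supersolution analogue of \LEM{uppersemi} is proved by the symmetric argument, interchanging $S^+_\ep \leftrightarrow S^-_\ep$ and $\limsup \leftrightarrow \liminf$ throughout: if $w$ is bounded below and $-\Delta_\infty^\ep w \geq h$ in $\Omega$ for some $h \in \LSC(\Omega)$, then $w_*$ satisfies the same inequality, and if $w \geq g$ on $\partial \Omega$ then $w_* \geq g$ there. Applied to $w = u$ and $h = f$ (which is continuous and hence both upper and lower semicontinuous in $\Omega$), this gives $-\Delta_\infty^\ep u_* \geq f$ in $\Omega$ and $u_* \geq g$ on $\partial \Omega$. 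Since also $u_* \leq u = g$ on $\partial \Omega$ by the boundary continuity hypothesis, we conclude $u_* = g$ on $\partial \Omega$.

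Next I would invoke \THM{no-extrema-comparison} with $v = u_*$: the inequality $-\Delta_\infty^\ep u = f \leq -\Delta_\infty^\ep u_*$ holds in $\Omega$ and $u = u_*$ on $\partial \Omega$, so the theorem would yield $u \leq u_*$ on $\bar \Omega$, hence $u = u_*$, provided one of the hypotheses holds: either $u$ has no strict $\ep$-local maximum or $u_*$ has no strict $\ep$-local minimum in $\Omega$.

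The main obstacle is verifying this extrema hypothesis when $f$ is sign-changing, since it does not follow directly from \LEM{no-extrema-no-extrema}. I would handle this by contradiction, mimicking the proof of \THM{no-extrema-comparison}. Suppose $M := \max_{\bar \Omega}(u - u_*) > 0$, attained on the closed set $E \subset \Omega$, and deduce (exactly as in that proof) $S^\pm_\ep u(x) = S^\pm_\ep u_*(x)$ for every $x \in E$. Select $x_0 \in E$ achieving $\min_E u_*$ and pick $y_k \to x_0$ with $u(y_k) \to u_*(x_0)$. For large $k$, $\rho_\ep(y_k, x_0) = \ep$, so
\begin{equation*}
S^+_\ep u(y_k) \geq \frac{u(x_0) - u(y_k)}{\ep} \longrightarrow \frac{M}{\ep}.
\end{equation*}
The finite difference equation at $y_k$ combined with near-optimizers $z_k \in \bar \Omega(y_k, \ep)$ for $S^-_\ep u(y_k)$ yields
\begin{equation*}
u(z_k) \leq u(y_k) - \ep \bigl(S^-_\ep u(y_k) - 1/k\bigr),
\end{equation*}
and a subsequential limit $z_\infty \in \bar \Omega(x_0, \ep)$ satisfies $u_*(z_\infty) \leq u_*(x_0) - M - \ep^2 f(x_0)$, strictly below $u_*(x_0)$ once $\ep^2 |f(x_0)|$ is dominated by $M$. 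Since $x_0$ minimizes $u_*$ on $E$, this forces $z_\infty \notin E$; iterating the descent at new points where the gap $u - u_*$ remains sufficiently large drives $u_*$ below $\inf_{\bar \Omega} u_* > -\infty$ (inherited from the boundedness of $u$ on the compact set $\bar \Omega$), producing the required contradiction.
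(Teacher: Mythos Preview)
Your setup is right and matches the paper: $u_*$ is a supersolution by the symmetric version of \LEM{uppersemi}, and on the contact set $E = \{u - u_* = M\}$ you correctly deduce $S^\pm_\ep u = S^\pm_\ep u_*$ from \REM{strict-comparison-discrete} combined with $-\Delta_\infty^\ep u \leq -\Delta_\infty^\ep u_*$.

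The gap is in your final contradiction. Two problems. First, the condition ``$\ep^2|f(x_0)|$ is dominated by $M$'' is not something you can arrange: $\ep$ is fixed by hypothesis and $M$ is determined by $u$, so nothing prevents $M$ from being tiny compared to $\ep^2\|f\|_{L^\infty}$. Second, and more seriously, the ``iteration'' is not a real argument. After producing $z_\infty \notin E$ with $u_*(z_\infty)$ small, you have no mechanism to continue: the descent step relied on $x_0$ lying in $E$ and realizing $\min_E u_*$, and $z_\infty$ enjoys neither property. Saying the gap ``remains sufficiently large'' at unspecified new points does not pin down where to restart or why the drop persists.

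The paper avoids both issues with a one-step contradiction that uses the very equality you already have. Work directly at $x_0$: since $u_*$ is lower semicontinuous, pick $y \in \bar\Omega(x_0,\ep)$ attaining
\[
\frac{u_*(x_0) - u_*(y)}{\rho_\ep(x_0,y)} = S^-_\ep u_*(x_0) = S^-_\ep u(x_0) \geq \frac{M}{\ep} > 0.
\]
Then $u_*(y) < u_*(x_0) = \min_E u_*$, so $y \notin E$, hence $u(y) - u_*(y) < M = u(x_0) - u_*(x_0)$. Rearranging and dividing by $\rho_\ep(x_0,y)$ gives
\[
\frac{u(x_0) - u(y)}{\rho_\ep(x_0,y)} > \frac{u_*(x_0) - u_*(y)}{\rho_\ep(x_0,y)} = S^-_\ep u(x_0),
\]
which contradicts the definition of $S^-_\ep u(x_0)$. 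No iteration, no smallness of $\ep$ required. You had all the pieces; the point is to exploit $S^-_\ep u(x_0) = S^-_\ep u_*(x_0)$ at $x_0$ itself rather than detouring through an approximating sequence $y_k$.
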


\begin{proof}
Suppose on the contrary that $u\not\equiv u_*$. Then there exists $x_0 \in \Omega$ such that
\begin{equation}
\label{eq:cont-max}
\gamma: = (u-u_*)(x_0) = \max_{\bar \Omega} (u - u_*) > 0.
\end{equation}
Since the function $u - u_*$ is upper semicontinuous, the set $E := \{ u - u_* \geq \gamma \} = \{ u - u_* = \gamma \}$ is closed. Since $u_*$ is lower semicontinuous, by relabeling $x_0$, if necessary, we may assume that
\begin{equation}\label{eq:cont-min}
u_*(x_0) = \min_E u_*.
\end{equation}
According to \LEM{uppersemi},
\begin{equation*}
- \Delta_\infty^\ep u_* \geq f \quad \mbox{in} \ \Omega.
\end{equation*}
In particular, $-\Delta_\infty^\ep u \leq -\Delta_\infty^\ep u_*$. By \EQ{cont-max} and \REM{strict-comparison-discrete}, we see that
\begin{equation*}
S^-_\ep u(x_0) = S^-_\ep u_*(x_0).
\end{equation*}
According to \EQ{cont-max}, this quantity is greater than $\gamma / \ep$. Since $u_*$ is lower semicontinuous, we may select $y \in \bar\Omega(x_0, \ep)$ such that
\begin{equation*}
\frac{u_*(x_0) - u_*(y)}{\rho_\ep(x_0, y)} = S^-_\ep u_*(x_0) = S^-_\ep u(x_0) \geq \gamma / \ep.
\end{equation*}
Using \EQ{cont-min} and $u_*(y) < u_*(x_0)$, we see that $y \not\in E$. Thus $u(y) - u_*(y) < \gamma$. But this implies that
\begin{equation*}
\frac{u(x_0) - u(y)}{\rho_\ep(x_0, y)} > \frac{u_*(x_0) - u_*(y)}{\rho_\ep(x_0, y)} = S^-_\ep u(x_0),
\end{equation*}
which contradicts the definition of $S^-_\ep u(x_0)$.
\end{proof}

We now construct explicit supersolutions to the finite difference infinity Laplace equation, which we find useful below. 

\begin{lem}
\label{lem:cone-super}
Denote
\begin{equation}
\label{eq:cone-super-varphi}
\varphi(x) :=
\left\{ \begin{array}{ll}
q(\path(x, x_0)) & \mbox{ if } x \in \Omega \\
q(\path(x, x_0)) - \delta & \mbox{ if } x \in \partial \Omega,
\end{array}\right.
\end{equation}
for $q(r) := a + b r - \frac{c}{2} r^2$, where $a, b, c, \delta \in \R$ and $x_0 \in \partial \Omega$. If $b \geq c(\ep + \diam(\Omega)) \geq 0$ and $\delta \geq 0$, then $\varphi$ is a solution of the inequality
\begin{equation*}
- \Delta_\infty^\ep \varphi \geq \min \left\{ c, \ep^{-1} (8 \delta c)^{\frac{1}{2}} \right\} \quad \mbox{in} \ \Omega.
\end{equation*}
\end{lem}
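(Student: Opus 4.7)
\emph{Proof plan.} Fix $x\in\Omega$ and set $r:=d(x,x_0)\in(0,\diam(\Omega)]$. The hypothesis $b\ge c(\ep+\diam(\Omega))\ge 0$ ensures that $q$ is concave and nondecreasing on $[0,\ep+\diam(\Omega)]$ with $q'(s)=b-cs\ge c\ep$ throughout $[0,\diam(\Omega)]$, and one has the elementary identity $q(r\pm s)-q(r)=\pm s(b-cr)-cs^2/2$. Combined with the triangle inequality $|d(y,x_0)-r|\le d(x,y)$, these facts will be used to bound $S_\ep^+\varphi(x)$ from above and $S_\ep^-\varphi(x)$ from below; interior and boundary points $y$ must be treated separately to track the boundary-bias term $-\delta$.

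For the upper bound on $S_\ep^+\varphi(x)$, I will split the supremum into $y\in\Omega\cap\bar\Omega(x,\ep)$ and $y\in\partial\Omega\cap\bar\Omega(x,\ep)$. Writing $s:=d(x,y)$, in the interior case $\rho_\ep(x,y)\ge\ep$ and $\varphi(y)\le q(r+s)$, and monotonicity combined with the choice $s=\ep$ gives the bound $(b-cr)-c\ep/2$. In the boundary case $\rho_\ep(x,y)=s$ and $\varphi(y)\le q(r+s)-\delta$, and a one-variable extremum of $(b-cr)-cs/2-\delta/s$ at $s^*=\sqrt{2\delta/c}$ produces the bound $(b-cr)-\sqrt{2c\delta}$. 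Taking the max of the two contributions yields
\[
S_\ep^+\varphi(x)\ \le\ (b-cr)-\min\bigl(c\ep/2,\ \sqrt{2c\delta}\bigr).
\]

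For the lower bound on $S_\ep^-\varphi(x)$, I consider two regimes. If $r\ge\ep$, I produce a test point from a near-geodesic $\gamma$ from $x_0$ to $x$: the point $y$ at arclength $\ep$ before $x$ along a path of length approaching $r$ satisfies $d(x,y)\le\ep$ and $d(y,x_0)\to r-\ep$, so passing to the limit of the corresponding ratios recovers $S_\ep^-\varphi(x)\ge (q(r)-q(r-\ep))/\ep=(b-cr)+c\ep/2$ (and boundary limits of $\gamma$ only help, thanks to the extra $+\delta$). If instead $r<\ep$, the point $y=x_0\in\partial\Omega$ is itself admissible, with $\rho_\ep(x,x_0)=r$ and $\varphi(x)-\varphi(x_0)=q(r)-q(0)+\delta$, giving $S_\ep^-\varphi(x)\ge b-cr/2+\delta/r$.

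Subtracting and dividing by $\ep$ then closes the argument in each regime: when $r\ge\ep$, one gets $S_\ep^--S_\ep^+\ge c\ep/2+\min(c\ep/2,\sqrt{2c\delta})\ge\min(c\ep,\,2\sqrt{2c\delta})$, while when $r<\ep$, the AM-GM inequality $cr/2+\delta/r\ge\sqrt{2c\delta}$ gives $S_\ep^--S_\ep^+\ge\sqrt{2c\delta}+\min(c\ep/2,\sqrt{2c\delta})\ge\min(c\ep,\,2\sqrt{2c\delta})$. This yields the claim $-\Delta_\infty^\ep\varphi(x)\ge\min\{c,\,\ep^{-1}(8c\delta)^{1/2}\}$. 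The main obstacle I anticipate is simply tightness of the two-regime case analysis: the bias $-\delta$ simultaneously sharpens the upper bound on $S_\ep^+$ (via exits across $\partial\Omega$) and is essential for recovering the $(8c\delta)^{1/2}$ term in the lower bound on $S_\ep^-$ when $x$ lies within $\ep$ of $x_0$, and one must verify that the two exit mechanisms---a near-geodesic interior step versus a direct jump to $x_0$---together beat both $c\ep$ and $2\sqrt{2c\delta}$ irrespective of how $r$, $\ep$ and $\delta/c$ compare.
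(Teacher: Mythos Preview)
Your proposal is correct and follows essentially the same approach as the paper's proof: bound $S^+_\ep\varphi(x)$ from above and $S^-_\ep\varphi(x)$ from below using the triangle inequality, the monotonicity and concavity of $q$, and AM--GM (Cauchy) to handle the boundary bias $-\delta$, then subtract. The paper's version is slightly more streamlined: rather than splitting the $S^-_\ep$ estimate into the regimes $r\ge\ep$ and $r<\ep$, it chooses a single test point $z_2$ on a minimal-length path from $x_0$ to $x$ with $d(x,z_2)=\min\{\ep,d(x,x_0)\}=\rho_\ep(x,z_2)$, which automatically lands on $x_0\in\partial\Omega$ when $r<\ep$ and gives the uniform lower bound $S^-_\ep\varphi(x)\ge (b-cr)+\min\{c\ep/2,\sqrt{2c\delta}\}$; this matches the form of the upper bound on $S^+_\ep$ and makes the final subtraction a one-liner, avoiding your case analysis at the end.
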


\begin{proof}
Fix $x \in \Omega$. Notice that $q$ is nondecreasing on the interval $[0, \ep + \diam(\Omega)]$, and $\varphi$ is lower semicontinuous on $\bar \Omega$. By Cauchy's inequality,
\begin{equation}\label{eq:crap-cauchy}
\left( 2\delta c\right)^{\frac{1}{2}} \leq \frac{s}{2} c + \frac{1}{s} \delta \quad \mbox{for every}  \ s > 0.
\end{equation}
Choose $z_1 \in \bar \Omega(x, \ep)$ such that
\begin{equation*}
S^+_\ep \varphi(x) = \frac{\varphi(z_1) - \varphi(x)}{\rho_\ep(x, z_1)}.
\end{equation*}
Let $\delta_1 = 0$ if $z_1 \in \Omega$, and $\delta_1 = \delta$ if $z_1 \in \partial \Omega$. We have
\begin{align*}
S^+_\ep \varphi(x) & \leq \frac{q(\path(x_0,x) + \rho_\ep(x,z_1)) - q(\path(x_0,x)) - \delta_1}{\rho_\ep(x,z_1)} \\
& = b -c \path(x,x_0) - \frac{c}{2} \rho_\ep(x, z_1) - \frac{\delta_1}{\rho_\ep(x,z_1)}.
\end{align*}
Considering the possible values for $\delta_1$ and recalling \EQ{crap-cauchy}, we deduce that
\begin{equation*}
S^+_\ep \varphi(x) \leq b - c\path(x,x_0) - \min \left\{ \frac{c}{2}\ep , (2\delta c)^{\frac{1}{2}} \right\}.
\end{equation*}

To get the corresponding inequality for $S^-_\ep \varphi(x)$, we choose $z_2\in \bar \Omega(x,\ep)$ along some minimal-length path between $x_0$ and $x$, so that
\begin{equation*}
\path(x,x_0) = \path(x,z_2) + \path(z_2,x_0) \quad \mbox{and} \quad \path(x,z_2) = \min\{ \ep , \path(x,x_0) \} = \rho_\ep(x,z_2).
\end{equation*}
Set $\delta_2 = 0$ if $z_2 \in \Omega$, and $\delta_2 = \delta$ if $z_2 \in \partial \Omega$. We have
\begin{align*}
S^-_\ep \varphi(x) & \geq \frac{\varphi(x) - \varphi(z_2)}{\rho_\ep(x, z_2)} \\
& = \frac{q(\path(x_0,x)) - q(\path(x_0,x) - \rho_\ep(x,z_2)) + \delta_2}{\rho_\ep(x,z_2)} \\
& = b - c\path(x,x_0) + \frac{c}{2} \rho_\ep(x, z_2) + \frac{\delta_2}{\rho_\ep(x,z_2)} \\
& \geq b - c\path(x,x_0)+ \min\left\{ \frac{c}{2} \ep, (2\delta c)^{\frac{1}{2}} \right\}.
\end{align*}
Combining the estimates for $S^+_\ep \varphi(x)$ and $S^-_\ep \varphi(x)$, we obtain
\begin{equation*}
S^-_\ep \varphi(x) - S^+_\ep \varphi(x) \geq \ep \min \{ c , \ep^{-1} (8 \delta c)^{\frac{1}{2}} \}.
\end{equation*}
We divide this inequality by $\ep$ to obtain the lemma.
\end{proof}

%%%%%%%%%
%%%%%%%%%
%%%%%%%%%

In the following lemma, we compare subsolutions of the finite difference equation to the value function for Player II for boundary-biased tug-of-war. This is the only place we employ probabilistic methods in this article.

\begin{lem}\label{lem:probability-martingale}
Assume that $u: \bar \Omega \to \R$ is a bounded function satisfying the inequality
\begin{equation*}
- \Delta_\infty^\ep u \leq f \mbox{ in } \Omega.
\end{equation*}
Then $u \leq g$ on $\partial \Omega$ implies that $u \leq V_{II}$ in $\Omega$, where $V_{II}$ is the value function for Player II with respect to boundary-biased tug-of-war, with running payoff function $f$ and terminal payoff function $g$.
\end{lem}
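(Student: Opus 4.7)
The plan is to show that for any $\delta > 0$ and any Player II strategy $\sigma_{II}$, Player I has a strategy $\sigma_I$ against which $F_{II}(\sigma_I, \sigma_{II}) \geq u(x_0) - \delta$. This yields $\sup_{\sigma_I} F_{II}(\sigma_I, \sigma_{II}) \geq u(x_0)$ for every $\sigma_{II}$ and hence $u(x_0) \leq V_{II}(x_0)$. We may assume $\sigma_{II}$ ensures almost-sure termination, since otherwise some $\sigma_I$ yields $F_{II} = +\infty$ and the conclusion is immediate.

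Player I's strategy $\sigma_I$ has two ingredients. At each stage $k$ with current position $x_{k-1} \in \Omega$, I would have Player I select $y^I_k \in \bar\Omega(x_{k-1},\ep)$ nearly realizing $S^+_\ep u(x_{k-1})$, i.e., satisfying
\begin{equation*}
\frac{u(y^I_k) - u(x_{k-1})}{\rho_\ep(x_{k-1}, y^I_k)} \geq S^+_\ep u(x_{k-1}) - \delta \ep^{-1} 2^{-k}.
\end{equation*}
To guarantee $\E[\tau] < \infty$ (where $\tau$ is the termination time), Player I occasionally plays a pulling move toward a fixed boundary point in the style of \cite{Peres:2009}; this costs at most an arbitrarily small additional amount in expected payoff and ensures the applicability of optional stopping below.

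The heart of the proof is a submartingale identity. Writing $\rho^I_k := \rho_\ep(x_{k-1}, y^I_k)$ and $\rho^{II}_k := \rho_\ep(x_{k-1}, y^{II}_k)$, the boundary-biased transition probability $P = \rho^{II}_k/(\rho^I_k + \rho^{II}_k)$ gives, by direct algebraic manipulation,
\begin{equation*}
\E[u(x_k) - u(x_{k-1}) \mid \mathcal{F}_{k-1}] = \frac{\rho^I_k \rho^{II}_k}{\rho^I_k + \rho^{II}_k}\left(\frac{u(y^I_k) - u(x_{k-1})}{\rho^I_k} - \frac{u(x_{k-1}) - u(y^{II}_k)}{\rho^{II}_k}\right).
\end{equation*}
Combining Player I's near-optimal choice, the trivial bound $(u(x_{k-1}) - u(y^{II}_k))/\rho^{II}_k \leq S^-_\ep u(x_{k-1})$, and the hypothesis $S^+_\ep u - S^-_\ep u \geq -\ep f$, the parenthesized difference exceeds $-\ep f(x_{k-1}) - \delta\ep^{-1} 2^{-k}$. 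Moreover $\E[\rho_\ep(x_{k-1},x_k) \mid \mathcal{F}_{k-1}] = 2\rho^I_k \rho^{II}_k/(\rho^I_k + \rho^{II}_k)$, so the prefactor is precisely half of the expected running-payoff weight $q(\ep,x_{k-1},x_k) = \rho_\ep(x_{k-1},x_k)$ appearing in \EQ{game-payoff}. It follows that
\begin{equation*}
M_k := u(x_k) + \frac{\ep}{2}\sum_{j=1}^k \rho_\ep(x_{j-1}, x_j) f(x_{j-1}) + \delta \sum_{j=1}^k 2^{-j}
\end{equation*}
is a submartingale for the natural filtration.

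Applying optional stopping at $\tau$ (uniform integrability follows from the boundedness of $u$ and $f$ together with $\E[\tau] < \infty$), we obtain $\E[M_\tau] \geq M_0 = u(x_0)$. Since $u(x_\tau) \leq g(x_\tau)$, we have $M_\tau \leq \mathrm{Payoff} + \delta$, and therefore $F_{II}(\sigma_I, \sigma_{II}) = \E[\mathrm{Payoff}] \geq u(x_0) - \delta$; letting $\delta \to 0$ finishes the proof. The hardest step is the submartingale calculation: the boundary-biased weights were designed precisely so that the harmonic-mean prefactor $\rho^I\rho^{II}/(\rho^I+\rho^{II})$ emerging from the drift of $u$ matches, up to the factor of $2$, the expected increment in running payoff, making the discrete inequality $-\Delta^\ep_\infty u \leq f$ translate cleanly into the submartingale property.
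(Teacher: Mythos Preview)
Your proposal is correct and follows essentially the same submartingale argument as the paper: Player I plays to (nearly) realize $S^+_\ep u$, the boundary-biased weights make the process $u(x_k) + \tfrac{\ep}{2}\sum_j \rho_\ep(x_{j-1},x_j) f(x_{j-1})$ a submartingale, and optional stopping together with $u \leq g$ on $\partial\Omega$ yields $u(x_0) \leq F_{II}$. The only differences are technical: the paper first replaces $u$ by its upper semicontinuous envelope (via \LEM{uppersemi}) so that exact maximizers exist, whereas you use $\delta 2^{-k}$-approximate maximizers; and you add a pulling component to secure $\E[\tau]<\infty$ for optional stopping, a point the paper leaves implicit.
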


\begin{proof}
By replacing $u$ with $u^*$ and applying \LEM{uppersemi}, we may assume that $u \in \USC(\bar\Omega)$. We must show that
\begin{equation*}
\inf_{\sigma_{II} \in S} \sup_{\sigma_I \in S} F_{II}(\sigma_I, \sigma_{II}) \geq u,
\end{equation*}
where $S$ is the set of all admissible strategies. It is enough to show that for any fixed strategy $\sigma_{II} \in S$, constant $c > 0$, and starting point $x_0 \in \Omega$, there is a strategy $\sigma_I \in S$ such that
\begin{equation*}
F_{II}(\sigma_I, \sigma_{II})(x_0) \geq u(x_0) - c.
\end{equation*}
Fix a point $y \in \partial \Omega$ and let $\sigma^*_I$ be a strategy such that
\begin{equation*}
\dist_\Omega(\sigma^*_I(x_0, ..., x_k), y) = \min_{\bar \Omega(x_k, \ep)} \dist_\Omega(\cdot, y).
\end{equation*}
That is, $\sigma^*_I$ pulls towards the point $y$. Since $\Omega$ has finite path diameter, we have
\begin{equation*}
m^* := \inf_\Omega F_{II}(\sigma_I^*, \sigma_{II}) > - \infty.
\end{equation*}
We now define a family of approximately optimal strategies for player I. For each $N \in \mathbb{N} \cup \{ \infty \}$, let $\sigma_I^N \in S$ be a strategy for player I such that
\begin{equation*}
\frac{u(\sigma_I^N(x_0, ..., x_k)) - u(x_k)}{\rho_\ep(x_k,\sigma_I^N(x_0, ..., x_k))} = S^+_\ep u(x_k) \quad \mbox{if } k < N,
\end{equation*}
and $\sigma_I^N(x_0, ...., x_k) = \sigma_I^*(x_0,....,x_k)$ if $k \geq N$. That is, $\sigma_I^N$ plays according to $u$ until the $N$th stage and then switches to $\sigma^*_I$.

Consider the stochastic process given by the pair of strategies $(\sigma_I^N, \sigma_{II})$ and the starting point $x_0$. We defined the strategy $\sigma_I^N$ so that the sequence of random variables
\begin{equation*}
M_k^N := u(x_k) + \frac{1}{2} \sum_{j = 1}^k \ep \rho_\ep(x_{j-1},x_j) f(x_{j-1}) \quad \mbox{for } k \leq N,
\end{equation*}
is a submartingale. Indeed, observe that if $k < N$ and $z := \sigma_I^N(x)$ then
\begin{align*}
\lefteqn{\E\left[ M_{k+1} \mid x_k= x \mbox{ and } M_k = m \right] - m} & \\ 
& \geq \min_{y \in \bar \Omega(x,\ep)} \left\{ \frac{\rho_\ep(x,y)}{\rho_\ep(x,z) + \rho_\ep(x,y)} \left( u(z) - u(x) + \frac{\ep}{2} \rho_\ep(x,z) f(x) \right) \right. \\
& \qquad \qquad  + \left. \frac{\rho_\ep(x,z)}{\rho_\ep(x,z) + \rho_\ep(x,y)} \left( u(y) - u(x) + \frac{\ep}{2} \rho_\ep(x,y)f(x) \right) \right\} \\
& = \min_{y \in \bar \Omega(x,\ep)} \left\{ \frac{\rho_\ep(x,y) \rho_\ep(x,z)}{\rho_\ep(x,z) + \rho_\ep(x,y)} \left( S^+_\ep u(x) - \frac{u(x) - u(y)}{\rho_\ep(x,y)} + \ep f(x) \right) \right\} \\
& \geq \min_{y \in \bar \Omega(x,\ep)} \left\{ \frac{\rho_\ep(x,y) \rho_\ep(x,z)}{\rho_\ep(x,z) + \rho_\ep(x,y)} \left( S^+_\ep u(x) - S^-_\ep u(x) + \ep f(x) \right) \right\} \\
& \geq \min_{y \in \bar \Omega(x,\ep)} \left\{ \frac{\rho_\ep(x,y) \rho_\ep(x,z)}{\rho_\ep(x,z) + \rho_\ep(x,y)} \left( \ep \Delta_\infty^\ep u(x) + \ep f(x) \right) \right\} \\
& \geq 0.
\end{align*}
Let $\tau^N$ be the stopping time for the process. By the optional sampling theorem, we have
\begin{equation*}
\E \left[ M_{\tau^N \land N}^N \right] \geq u(x_0),
\end{equation*}
for all $N < \infty$. We may assume that $\tau^\infty$ is finite almost surely, as otherwise
\begin{equation*}
F_{II}(\sigma_I^\infty, \sigma_{II})(x_0) = \infty > u(x_0).
\end{equation*}
Since $\tau^N$ and $\tau^\infty$ are identical for stages $k < N$, it follows that
\begin{equation*}
\prob\left[\tau^N > N \right] \to 0 \quad \mbox{as } N \to \infty.
\end{equation*}
Thus we may estimate
\begin{align*}
F_{II}(\sigma_I^N, \sigma_{II})(x_0) & \geq \prob\left[\tau^N \leq N\right] \E\left[M^N_{\tau^N \land N}\right]  + \prob\left[ \tau^N > N\right] m^* \\
& \geq \prob\left[\tau^N \leq N\right] u(x_0)  + \prob\left[ \tau^N > N\right] m^*,
\end{align*}
and thus obtain
\begin{equation*}
F_{II}(\sigma_I^N, \sigma_{II})(x_0) \geq u(x_0) - c,
\end{equation*}
for sufficiently large $N > 0$.
\end{proof}

%%%%%%%%%
%%%%%%%%%
%%%%%%%%%

We now prove \THM{existence}, using a simple adaptation of Perron's method.

\begin{proof}[{\bf Proof of \THM{existence}}]
Our candidate for a maximal solution is
\begin{equation*}
u(x) : = \sup \left\{ w(x) : w \in \USC(\bar \Omega) \ \mbox{satisfies} \ -\Delta_\infty^\ep w \leq f \ \mbox{in} \ \Omega, \ w \leq g \ \mbox{on} \ \partial \Omega \right\}.
\end{equation*}
According to \LEM{cone-super}, the admissible set is nonempty and $u$ is bounded below. Also, by varying the parameters $\delta > 0$ and $c>0$ in \LEM{cone-super}, we see that $u = g$ on $\partial \Omega$, and that $u$ is continuous at each boundary point $x \in \partial \Omega$. We may also use \LEM{cone-super} together with \REM{strict-comparison-discrete} to see that $u$ is bounded above. 

Let us verify that $u$ satisfies the inequality
\begin{equation}\label{eq:sup-of-sub-is-sub}
-\Delta_\infty^\ep u \leq f \quad \mbox{in} \ \Omega.
\end{equation}
Fix $x\in \Omega$ and $\delta > 0$, and select a function $w$ such that $-\Delta_\infty^\ep w \leq f$ in $\Omega$ and $w \leq g$ on $\partial \Omega$, and for which
\begin{equation*}
u(x) \leq w(x) + \delta \min\left\{ \ep , \dist(x,\partial \Omega) \right\}.
\end{equation*}
Then we have
\begin{align*}
S^-_\ep u(x) = \sup_{y \in \bar\Omega(x,\ep)} \frac{u(x) - u(y)}{\rho_\ep(x,y)} \leq \sup_{y \in \bar\Omega(x,\ep)} \frac{w(x) - w(y)}{\rho_\ep(x,y)} + \delta = S^-_\ep w(x) + \delta.
\end{align*}
In a similar way, we check that $S^+_\ep u(x) \geq S^+_\ep w(x) - \delta$. Hence
\begin{equation*}
-\Delta^\ep_\infty u(x) \leq -\Delta_\infty^\ep w(x) + 2\delta/\ep \leq f(x) + 2\delta / \ep.
\end{equation*}
We now send $\delta \to 0$ to obtain \EQ{sup-of-sub-is-sub}.

According to \LEM{uppersemi} and the definition of $u$, we have $u \geq u^*$, and thus $u = u^*\in \USC(\bar\Omega)$. We now check that $u$ satisfies the inequality
\begin{equation}\label{eq:sup-of-subs-is-sup}
-\Delta_\infty^\ep u \geq f \quad \mbox{in} \ \Omega.
\end{equation}
Suppose \EQ{sup-of-subs-is-sup} fails to hold at some point $x \in \Omega$. Then
\begin{equation*}
S^-_\ep u(x) - S^+_\ep u(x) = \ep f(x) - \gamma,
\end{equation*}
for some $\gamma >0$. Define
\begin{equation*}
w (y) :=  \begin{cases}
u(x) + \frac{\gamma}{2} \min\{ \ep, \dist(x, \partial \Omega) \} & \mbox{if} \ y= x,  \\
u(y) & \mbox{otherwise.}
\end{cases}
\end{equation*}
It is easy to check that
\begin{equation*}
S^-_\ep w (x) - S^+_\ep w (x) \leq \ep f(x).
\end{equation*}
Moreover, for each $y \in \Omega \setminus \{ x \}$ we have
\begin{equation*}
S^+_\ep w(y) \geq S^+_\ep u(y),
\end{equation*}
and, by the upper semicontinuity of $u$,
\begin{equation*}
S^-_\ep w(y) = S^-_\ep u(y).
\end{equation*}
Thus $w$ satisfies the inequality $-\Delta_\infty^\ep  w \leq f$ in $\Omega$, and $w \leq g$ on $\partial \Omega$. Since $w(x) > u(x)$, we obtain a contradiction to the definition of $u$. Thus $u$ satisfies \EQ{sup-of-subs-is-sup}. According to \LEM{cont}, $u\in C(\bar\Omega)$. 

We have shown that $u$ is a solution of $-\Delta_\infty^\ep u = f$ in $\Omega$.  By construction, $u$ is maximal. By \LEM{probability-martingale}, $u \leq V_{II}$. Since $V_{II}$ is also a bounded, measurable solution, we have $V_{II}^* \leq u$ by \LEM{uppersemi} and the definition of $u$. Hence $u = V_{II}$, and the proof is complete.
\end{proof}

%%%%%%%%%
%%%%%%%%%
%%%%%%%%%

We next prove estimates for solutions of the finite difference equation.

\begin{lem}
\label{lem:bcont}
Suppose that $u: \bar \Omega \to \R$ is bounded and satisfies
\begin{equation*}
\left\{ \begin{aligned}
& - \Delta_\infty^\ep u \leq f && \mbox{in} \ \Omega, \\
& u \leq g && \mbox{on} \ \partial \Omega.
\end{aligned} \right.
\end{equation*}
Then there is a constant $C$ depending only on $\diam(\Omega)$ such that
\begin{equation}
\label{eq:bcont-rhobound}
u(x) - g(x_0) \leq 2 \omega_g(\path(x_0,x)) + C \| f \|_{L^\infty(\Omega)} \rho_\ep(x_0,x),
\end{equation}
for every $x \in \Omega$ and $x_0 \in \partial \Omega$.
\end{lem}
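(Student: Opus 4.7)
The plan is to prove the estimate by comparison with an explicit cone barrier constructed via \LEM{cone-super}. Concretely, I will build a function $\varphi$ satisfying $-\Delta_\infty^\ep \varphi \geq \|f\|_{L^\infty(\Omega)}$ in $\Omega$ and $\varphi \geq g$ on $\partial \Omega$, then apply \COR{comparison} to get $u \leq \varphi$ on $\bar \Omega$, and finally evaluate at $x$.

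By \LEM{uppersemi} I may replace $u$ by $u^*$ and assume $u \in \USC(\bar\Omega)$. Fix $x_0 \in \partial\Omega$ and $x \in \Omega$, and set $R := \path(x, x_0) = \distep{\ep}{x}{x_0}$, $F := \|f\|_{L^\infty(\Omega)}$, and $D := \diam \Omega$. I take $\varphi$ as in \LEM{cone-super} with $q(r) = a + br - \tfrac{c}{2} r^2$ and parameters
\[
c := F, \qquad \delta := F\ep^2/8, \qquad b := \omega_g(R)/R + F(\ep + D), \qquad a := g(x_0) + \delta + \omega_g(R).
\]
These choices give $\min\{c, \ep^{-1}\sqrt{8 \delta c}\} = F$, so \LEM{cone-super} yields $-\Delta_\infty^\ep \varphi \geq F \geq f$ in $\Omega$.

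To verify $\varphi \geq g$ on $\partial \Omega$, I would use the concavity of $\omega_g$ (together with $\omega_g(0) = 0$), which implies that $s \mapsto \omega_g(s)/s$ is nonincreasing, whence $\omega_g(s) \leq \omega_g(R) + s\,\omega_g(R)/R$ for every $s \geq 0$. Combined with $g(y) \leq g(x_0) + \omega_g(\path(y, x_0))$ for $y \in \partial \Omega$, this modulus estimate is absorbed by the constant $\omega_g(R)$ inside $a$ and by the slope $\omega_g(R)/R$ inside $b$, while the inequality $b \geq cD/2$ prevents the residual quadratic $-cs^2/2$ from causing trouble on $[0, D]$. With $\varphi \geq g$ on $\partial\Omega$ confirmed and $-\Delta_\infty^\ep \varphi \geq F \geq 0$, \COR{comparison} applies and gives $u \leq \varphi$ on $\bar \Omega$.

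Evaluating at $x$ then produces
\[
u(x) - g(x_0) \leq \varphi(x) - g(x_0) = \delta + 2 \omega_g(R) + F R(\ep + D - R/2),
\]
and using $\ep < 1$, the $\delta$ and $FR\ep$ contributions can be absorbed into $CFR$ for a constant $C$ depending only on $D$. The principal difficulty is the simultaneous balancing of the four parameters $a, b, c, \delta$ of \LEM{cone-super} so that the resulting cone meets the supersolution inequality, majorizes $g$ pointwise on $\partial \Omega$, and evaluates at $x$ to an expression of the desired two-term form; the concavity of the modulus $\omega_g$ is the essential ingredient that makes the boundary comparison tight.
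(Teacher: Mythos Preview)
Your proof is correct and follows essentially the same approach as the paper's: the same reduction to $u\in\USC(\bar\Omega)$ via \LEM{uppersemi}, the same barrier from \LEM{cone-super} with the identical parameter choices $c=\|f\|_{L^\infty}$, $\delta=c\ep^2/8$, $b=\omega_g(R)/R+c(\ep+\diam\Omega)$, $a=g(x_0)+\omega_g(R)+\delta$, and the same appeal to \COR{comparison}. Your explicit justification of the boundary inequality $\varphi\geq g$ via the concavity of $\omega_g$ is exactly the ``straightforward check'' the paper leaves to the reader.
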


\begin{proof}
Using \LEM{uppersemi}, we may assume that $u$ is upper semicontinuous. Fix $x_0 \in \partial \Omega$ and $x_1 \in \Omega$. Let $\varphi$ be the function in the statement of \LEM{cone-super}, for the constants
\begin{align*}
a & := g(x_0) + \omega_g(\path(x_0,x_1)) + \delta, \\
b & := \frac{w_g(\path(x_0,x_1))}{\path(x_0,x_1)} + (\ep + \diam(\Omega))\| f \|_{L^\infty(\Omega)}, \\
c & := \| f \|_{L^\infty(\Omega)}, \\
\delta & := \ep^2 8^{-1} c.
\end{align*}
Using that $\omega_g$ is a modulus of continuity for $g$ and recalling \EQ{omega-g-concave}, it is straightforward to check that $\varphi \geq g$ on $\partial \Omega$. By \COR{comparison} and \LEM{cone-super}, we have $\varphi \geq u$ in $\Omega$. In particular,
\begin{multline*}
u(x_1) \leq \varphi(x_1) = g(x_0) + 2 \omega_g(\path(x_0,x_1)) \\ + \| f \|_{L^\infty(\Omega)} \left( \textstyle\frac{1}{8}\ep^2 + \ep \path(x_0,x_1) + \diam(\Omega) \path(x_0,x_1) \right),
\end{multline*}
which implies \EQ{bcont-rhobound} for $x = x_1$.
\end{proof}

%%%%%%%%%
%%%%%%%%%
%%%%%%%%%

The next lemma uses a marching argument to obtain an interior continuity estimate. By ``marching," we mean an iterated selection of points $x_{k+1}$ which achieve $S^+_\ep u(x_k)$. This is analogous to following the the gradient flowlines of a subsolution of the continuum equation. See \cite[Section 6]{Crandall:2008} for details.

\begin{lem}
\label{lem:icont}
Suppose that $u: \bar \Omega \to \R$ is bounded and satisfies
\begin{equation*}
\left\{ \begin{array}{ll}
- \Delta_\infty^\ep u = f & \mbox{ in } \Omega \\
u = g & \mbox{ on } \partial \Omega.
\end{array} \right.
\end{equation*}
There exists a constant $C$ depending only on $\diam(\Omega)$ such that
\begin{equation}
\label{eq:icont-bound}
|u(x) - u(y)| \leq  C \left( \frac{\omega_g(r)}{r} + \| f \|_{L^\infty(\Omega)} \right) \rho_\ep(x,y),
\end{equation}
for every $x, y \in \Omega_r$ and $r \geq \ep$.
\end{lem}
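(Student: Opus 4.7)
Fix $x \in \Omega_r$. By symmetry it suffices to bound $S^+_\ep u(x)$ from above; the full estimate \EQ{icont-bound} then follows by summing one-step contributions along a nearly length-minimizing path between $x$ and $y$ that stays inside $\Omega_{r/2}$, using the concavity property \EQ{omega-g-concave} to absorb the loss of a factor two in the distance to the boundary.

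The core step is a discrete marching argument. Starting from $x_0 = x$, iteratively choose $x_{k+1} \in \bar\Omega(x_k, \ep)$ nearly achieving the supremum in $S^+_\ep u(x_k)$,
\[
\frac{u(x_{k+1}) - u(x_k)}{\rho_\ep(x_k, x_{k+1})} \geq S^+_\ep u(x_k) - \delta_k,
\]
with $\sum_k \delta_k$ arbitrarily small. From the equation $-\Delta_\infty^\ep u(x_{k+1}) = f(x_{k+1})$ together with the trivial lower bound $S^-_\ep u(x_{k+1}) \geq (u(x_{k+1}) - u(x_k))/\rho_\ep(x_{k+1}, x_k)$, one deduces the near-monotonicity
\[
S^+_\ep u(x_{k+1}) \geq S^+_\ep u(x_k) - \delta_k - \ep\,\|f\|_{L^\infty(\Omega)}.
\]
Since \LEM{bcont} applied to both $u$ and $-u$ supplies a uniform $L^\infty$ bound on $u$ over $\bar\Omega$, the marching sequence must terminate within $N \leq C\diam(\Omega)/\ep$ steps in one of two ways: either $S^+_\ep u(x_K) \leq 0$ at some step $K$, in which case telescoping the near-monotonicity immediately gives $S^+_\ep u(x_0) \leq C\|f\|_{L^\infty(\Omega)}\diam(\Omega)$; or else the sequence exits $\Omega_r$ at some step $N$, meaning $\dist(x_N, \partial\Omega) \leq r$.

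In the exit case, let $y_0 \in \partial\Omega$ satisfy $\path(y_0, x_N) \leq r$, and compare $u$ with upper and lower cones from \LEM{cone-super} anchored at $y_0$, with parameters chosen in the spirit of the proof of \LEM{bcont} but with the distance scale set to $\sim r$ in place of $\path(y_0, x_0)$: slope $b \sim \omega_g(r)/r + (\ep + \diam(\Omega))\|f\|_{L^\infty(\Omega)}$, curvature $c = \|f\|_{L^\infty(\Omega)}$, and boundary drop $\delta \sim \ep^2 c$. The resulting sandwich of $u$ between two cones of slope $b$ in a neighborhood of $x_N$ extracts a slope bound $S^+_\ep u(x_N) \leq C(\omega_g(r)/r + \|f\|_{L^\infty(\Omega)})$. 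Telescoping the near-monotonicity $S^+_\ep u(x_0) \leq S^+_\ep u(x_N) + N\ep\|f\|_{L^\infty(\Omega)} + \sum_k \delta_k$ together with $N\ep \leq \diam(\Omega)$ finishes the one-step slope estimate.

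The main obstacle is this last extraction: the cone comparison at $x_N$ yields only an \emph{oscillation} bound $u(z) - u(x_N) \lesssim \omega_g(r) + \|f\|_{L^\infty(\Omega)}\,r$ on $\bar\Omega(x_N, \ep)$, and one must avoid dividing this naively by $\rho_\ep \geq \ep$, which would lose the crucial factor $r/\ep$. The sharpness is recovered by exploiting the linear slope parameter $b$ of the cones themselves rather than the $C^0$ gap between $u$ and the cone: choosing the supremum realizing $S^+_\ep u(x_N)$ along the outward ray from $y_0$ forces $u$ to be $b$-Lipschitz along that ray, up to a second-order curvature correction of size $c\ep$ and a boundary drop of size $\delta/\ep \sim \ep c$, all of which are absorbed into $C(\omega_g(r)/r + \|f\|_{L^\infty(\Omega)})$.
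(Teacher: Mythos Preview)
Your marching argument is the same as the paper's in spirit, but there is a genuine gap in the exit case, and the fix you propose does not close it.

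The problem is your termination criterion. You stop when the sequence leaves $\Omega_r$ and then try to bound $S^+_\ep u(x_N)$ by sandwiching $u$ between two quadratic cones anchored at a nearby boundary point $y_0$. As you correctly note, the cone sandwich only yields an oscillation bound of order $\omega_g(r) + \|f\|_{L^\infty} r$ on $\bar\Omega(x_N,\ep)$, and dividing by $\rho_\ep \geq \ep$ loses a factor $r/\ep$. Your proposed remedy --- ``choosing the supremum realizing $S^+_\ep u(x_N)$ along the outward ray from $y_0$'' --- is not available: the point realizing the supremum is determined by $u$, not by you, and even along that ray the upper cone gives only $u(z)\le\varphi(z)$ while the lower bound $u(x_N)\ge\psi(x_N)$ leaves the gap $\varphi(x_N)-\psi(x_N)\sim\omega_g(r)+br$, which after division by $\ep$ is again too large. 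The slope parameter $b$ of the cone controls $\varphi(z)-\varphi(x_N)$, but it does not control $\varphi(x_N)-u(x_N)$.

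The paper avoids this difficulty entirely by marching all the way to $\partial\Omega$ (halting at the first $N$ with $x_N\in\partial\Omega$ or with $\ep N\ge\diam(\Omega)$; a sup-norm bound from \LEM{bcont} rules out the latter when $S^+_\ep u(x_0)$ is large). At $x_N\in\partial\Omega$ one has $u(x_N)=g(x_N)$, so there is no slope to extract: instead one compares the telescoped lower bound
\[
u(x_N)-u(x_0)\ \ge\ \ep(N-1)\,S^+_\ep u(x_0)\ -\ \tfrac12(\ep N)^2\|f\|_{L^\infty}
\]
directly against the boundary estimate $|u(x_0)-g(x_N)|\le 2\omega_g(\path(x_0,x_N))+C\|f\|_{L^\infty}$ from \LEM{bcont}. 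Since $\ep N\ge \dist(x_0,\partial\Omega)>r$, concavity gives $\omega_g(\path(x_0,x_N))\le \ep N\,\omega_g(r)/r$, and the two bounds contradict a large value of $S^+_\ep u(x_0)$. This is precisely the missing ingredient: continue the march to the boundary rather than stopping at $\partial\Omega_r$.
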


\begin{proof}
Suppose that \EQ{icont-bound} fails for $C=c$. Then we may assume there is an $r \geq \ep$ and $x_0 \in \Omega_r$ with
\begin{equation}\label{eq:jkdfhjlasd}
S^+_\ep u(x_0) > c \left( \frac{\omega_g(r)}{r} + \| f \|_{L^\infty(\Omega)} \right).
\end{equation}
Assume first that $u$ is continuous. Having chosen $x_0,x_1,\ldots, x_k \in \Omega$, select $x_{k+1} \in \bar \Omega(x_k, \ep)$ such that
\begin{equation}\label{eq:realizesplus}
\frac{u(x_{k+1}) - u(x_k)}{\rho_\ep(x_k, x_{k+1})} = S^+_\ep u(x_k).
\end{equation}
We halt this process at $k=N$, where $x_N \in \partial \Omega$ or $\ep N \geq \diam(\Omega)$.

Notice that whenever $x_k \in \Omega$,
\begin{equation*}
S^+_\ep u(x_{k}) + \ep f(x_{k}) = S^-_\ep u(x_{k}) \geq \frac{u(x_{k}) - u(x_{k-1})}{\ep} = S^+_\ep u(x_{k-1}).
\end{equation*}
Hence for each $1 \leq k \leq N$ such that $x_k \in \Omega$,
\begin{equation*}
S^+_\ep u(x_k) \geq S^+_\ep u(x_0) - \ep k \| f \|_{L^\infty(\Omega)}.
\end{equation*}

We claim that if $c> 0$ is large enough relative to $\diam(\Omega)$, then $x_N \in \partial \Omega$. Suppose that $\ep N \geq \diam(\Omega)$ and $x_N \in \Omega$. Then
\begin{equation}
\label{eq:icont-eq1}
\begin{aligned}
u(x_N) - u(x_0) & = \sum_{k=1}^N \left( u(x_k) - u(x_{k-1})\right) \\
& = \ep \sum_{k=1}^N S^+_\ep u(x_{k-1}) \\
& \geq \ep N S^+_\ep u(x_0) - \ep^2 \frac{N(N-1)}{2} \| f \|_{L^\infty(\Omega)} \\
& \geq \left(c \diam(\Omega) - \diam(\Omega)^2 \right) \left( \frac{w_g(r)}{r} + \| f \|_{L^\infty(\Omega)} \right).
\end{aligned}
\end{equation}
Thus if $c$ is large enough relative to $\diam(\Omega)$, then we derive a contradiction to the estimate for $\sup_{\Omega} |u|$ deduced from \EQ{bcont-rhobound}. Thus we may assume $x_N \in \partial \Omega$ and $\ep (N-1) \leq \diam(\Omega)$.

Thus
\begin{equation*}
\ep N \geq \sum_{k=1}^n \path(x_k, x_{k-1}) \geq \path(x_0,x_N) \geq \dist(x_0,\partial \Omega) \geq r,
\end{equation*}
and from \EQ{omega-g-concave} we deduce that
\begin{equation*}
\omega_g(\path(x_0,x_N)) \leq \ep N \frac{\omega_g(r)}{r}.
\end{equation*}
Using \LEM{bcont} we see that 
\begin{equation}\label{eq:butisht}
|u(x_0) - u(x_N) | \leq \ep N \frac{\omega_g(r)}{r} + C \| f \|_{L^\infty(\Omega)} 
\end{equation}
Combining \EQ{butisht} with a calculation similar to \EQ{icont-eq1} yields
\begin{equation}
\label{eq:icont-eq2}
2 \ep N \frac{\omega_g(r)}{r} + C \| f \|_{L^\infty(\Omega)} \geq  \ep (N-1) S^+_\ep u(x_0) - \ep^2 \frac{N^2}{2} \| f \|_{L^\infty(\Omega)}.
\end{equation}
Using $\ep N \leq \diam(\Omega)$ and recalling \EQ{jkdfhjlasd}, we obtain a contradiction if $c$ is large enough relative to $\diam(\Omega)$. This completes the proof of \EQ{icont-bound} in the case that $u$ is continuous.

If $u$ is not continuous, then we may only choose $x_{k+1}$ which approximate \EQ{realizesplus}. However, since at most $\lceil \ep^{-1} \diam(\Omega) \rceil$ approximations are required, we can let the error be arbitrarily small.
\end{proof}

%%%%%%%%%
%%%%%%%%%
%%%%%%%%%

Combining the two previous lemmas, we obtain a global continuity estimate with respect to $\rho_\ep(x,y)$.

\begin{lem}\label{lem:epcont}
Assume that $u : \bar \Omega \to \R$ satisfies the equation
\begin{equation*}
\left\{ \begin{aligned}
& -\Delta_\infty^\ep u = f &  \mbox{in} & & \Omega, \\
& u = g & \mbox{on} & & \partial \Omega.
\end{aligned}\right.
\end{equation*}
There exists a modulus $\omega \in C(0,\infty)$, which depends only on $\diam(\Omega)$, the modulus $\omega_g$, and $\| f \|_{L^\infty(\Omega)}$, such that
\begin{equation}
\label{eq:epcont}
|u(x) - u(y)| \leq \omega(\rho_\ep(x,y)) \quad \mbox{for every}  \ x, y \in \bar \Omega.
\end{equation}
\end{lem}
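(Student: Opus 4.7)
The plan is to combine the boundary estimate of Lemma~\ref{lem:bcont} with the interior estimate of Lemma~\ref{lem:icont} by introducing an intermediate length scale $r=r(s)$ depending only on $s:=\rho_\ep(x,y)$. Since $u$ is a genuine solution rather than merely a subsolution, $-u$ satisfies $-\Delta_\infty^\ep(-u)=-f$ in $\Omega$ with boundary data $-g$ (which has the same modulus $\omega_g$ as $g$). Applying Lemma~\ref{lem:bcont} to both $u$ and $-u$ yields the two-sided estimate
\begin{equation*}
|u(z)-g(x_0)|\leq 2\omega_g(d(x_0,z))+C\|f\|_{L^\infty(\Omega)}\,\rho_\ep(x_0,z)
\end{equation*}
for every $x_0\in\partial\Omega$ and $z\in\bar\Omega$.

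Now fix $x,y\in\bar\Omega$ with $s=\rho_\ep(x,y)$, let $r\geq\ep$ be a parameter to be chosen below, and split into two cases. In Case~1, both $x,y\in\Omega_r$, so Lemma~\ref{lem:icont} applies and yields
\begin{equation*}
|u(x)-u(y)|\leq C\Bigl(\tfrac{\omega_g(r)}{r}+\|f\|_{L^\infty(\Omega)}\Bigr)\,s.
\end{equation*}
In Case~2, one of the points, say $x$, satisfies $\dist(x,\partial\Omega)<r$; pick $x_0\in\partial\Omega$ with $d(x_0,x)<r$. Since $x_0\in\partial\Omega$, the definition of $\rho_\ep$ gives $\rho_\ep(x_0,w)=d(x_0,w)$ for $w\in\bar\Omega$, and the triangle inequality for $d$ yields $d(x_0,y)\leq r+s$. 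Applying the two-sided boundary estimate at both $x$ and $y$, using the triangle inequality, and exploiting that $\omega_g$ is nondecreasing produces
\begin{equation*}
|u(x)-u(y)|\leq 4\omega_g(r+s)+2C\|f\|_{L^\infty(\Omega)}(r+s).
\end{equation*}

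Choosing $r=\max\{\ep,\sqrt{s\,\diam(\Omega)}\}$ for $s\leq\diam(\Omega)$ (and using the trivial bound $2\sup_{\bar\Omega}|u|$, itself controlled by Lemma~\ref{lem:bcont}, for $s>\diam(\Omega)$), both of the right-hand sides above tend to $0$ as $s\to 0^+$ and depend only on $\diam(\Omega)$, $\omega_g$, and $\|f\|_{L^\infty(\Omega)}$. Taking $\omega(s)$ to be the sum of the two bounds, then replacing it by its concave majorant on $[0,\infty)$, delivers a continuous, nondecreasing modulus with $\omega(0^+)=0$, which gives~\eqref{eq:epcont}. The main obstacle is guaranteeing that $\omega$ is independent of $\ep$: this is exactly why the chosen scale is $r=\max\{\ep,\sqrt{s\,\diam(\Omega)}\}$. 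Whenever both $x,y\in\Omega$ one has $s\geq\ep$, so $\sqrt{s\,\diam(\Omega)}\geq\ep$ (for $\ep\leq\diam(\Omega)$) and the interior estimate can be invoked at a scale depending only on $s$; whenever $s<\ep$, at least one endpoint lies on $\partial\Omega$ and we fall automatically into Case~2, which places no lower bound on $r$.
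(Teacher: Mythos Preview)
Your proof is correct and follows essentially the same strategy as the paper: pick an intermediate scale $r\sim\sqrt{s}$ where $s=\rho_\ep(x,y)$, apply Lemma~\ref{lem:icont} when both points lie in $\Omega_r$, and otherwise use the two-sided version of Lemma~\ref{lem:bcont}. The paper uses $r=\rho_\ep(x,y)^{1/2}$ and splits into three cases (both deep, both near the boundary, and a mixed case handled by inserting an intermediate point on a minimal path), whereas your two-case split---using a \emph{single} boundary point $x_0$ near $x$ and bounding $d(x_0,y)\le r+s$ directly---avoids the intermediate-point argument entirely and is slightly cleaner.

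One cosmetic point: the $\max\{\ep,\cdot\}$ in your choice of $r$ is unnecessary. Simply take $r=\sqrt{s\,\diam(\Omega)}$ throughout. In Case~1 both points lie in $\Omega_r\subseteq\Omega$, which forces $s\ge\ep$ and hence $r\ge\sqrt{\ep\,\diam(\Omega)}\ge\ep$, so the hypothesis $r\ge\ep$ of Lemma~\ref{lem:icont} is automatically met; Case~2 never requires it. Stating it this way makes the $\ep$-independence of $\omega$ immediate and removes the need for your final paragraph's case analysis.
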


\begin{proof}
Fix $x, y \in \Omega$ and set $r : = \rho_\ep(x,y)^{1/2}$. If $x, y \in \Omega_r$, then \EQ{icont-bound} implies
\begin{equation*}
|u(x) - u(y)| \leq C \left( \omega_g(\rho_\ep(x,y)^{1/2}) + \| f \|_{L^\infty(\Omega)} \rho_\ep(x,y)^{1/2} \right),
\end{equation*}
for some $C$ depending only on $\diam(\Omega)$.
If $x, y \in \bar \Omega \setminus \Omega_{2 r}$, then \EQ{bcont-rhobound} and the triangle inequality imply
\begin{equation*}
|u(x) - u(y)| \leq 2 C \left( \omega_g(2\rho_\ep(x,y)^{1/2}) + \| f \|_{L^\infty(\Omega)} 2 \rho_\ep(x,y)^{1/2} \right) + \omega_g(\path(x,y)),
\end{equation*}
for another $C$ depending only on $\diam(\Omega)$.
If $x \in \Omega_{2 r}$ and $y \in \bar \Omega \setminus \Omega_{r}$, then choose $z \in \Omega_{2 r} \setminus \Omega_{r}$ on a minimal path between $x$ and $y$. Since
\begin{equation*}
2 \rho_\ep(x,y) \geq \rho_\ep(x,z) + \rho_\ep(z,y),
\end{equation*}
we can combine the two previous estimates to obtain \EQ{epcont}.
\end{proof}

%%%%%%%%%
%%%%%%%%%
%%%%%%%%%

In the case $f \equiv 0$ and $g$ is Lipschitz, the proofs of the estimates above yield a bit more. In particular, we show that in this case the solution of the finite difference problem is a minimizing Lipschitz extension of $g$ to $\bar \Omega$.

\begin{prop}
\label{prop:minlip}
Assume $g \in C(\partial \Omega)$ is Lipschitz with constant $K$ with respect to the path metric $\path$. Then the unique solution of the problem
\begin{equation}
\label{eq:minlip-eq}
\left\{ \begin{array}{lll}
- \Delta_\infty^\ep u = 0 & \mbox{in} & \Omega, \\
u = g & \mbox{on} & \partial \Omega,
\end{array} \right.
\end{equation}
is also Lipschitz with constant $K$ with respect to the path metric $\path$.
\end{prop}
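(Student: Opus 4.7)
The plan is to combine a tight cone comparison (giving sharp boundary Lipschitz control) with a doubling-variables maximum-principle argument to promote that control to a global Lipschitz estimate. Existence and uniqueness of the solution $u$ are immediate from \THM{existence} and \COR{comparison} (which applies since $f\equiv 0$).

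I would first establish that $|u(x)-g(z)|\le K\,d(x,z)$ for every $x\in\bar\Omega$ and $z\in\partial\Omega$. Applying \LEM{cone-super} with $x_0=z$, $a=g(z)$, $b=K$, $c=0$, and $\delta=0$ shows that $\varphi_z(x):=g(z)+K\,d(x,z)$ is a supersolution, while the $K$-Lipschitz assumption on $g$ yields $\varphi_z\ge g$ on $\partial\Omega$; \COR{comparison} then gives $u\le\varphi_z$, and the dual inequality follows the same way. Running the marching argument from the proof of \LEM{icont} with this \emph{tight} boundary estimate in place of \LEM{bcont} forces $S^+_\ep u\le K$ (and symmetrically $S^-_\ep u\le K$) everywhere in $\Omega$: starting at a point achieving $\sup_\Omega S^+_\ep u$ and marching along maximizers, the identity $S^+_\ep u=S^-_\ep u$ forces $S^+_\ep u$ to be nondecreasing along the march, which must end at a boundary point, and the sharp boundary estimate applied to the total increment of $u$ along the path bounds the slope by $K$.

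For the global Lipschitz estimate, I would consider $\Phi(x,y):=u(x)-u(y)-K\,d(x,y)$ on $\bar\Omega\times\bar\Omega$ and let $M:=\max\Phi$, attained at some $(x^*,y^*)$. Supposing for contradiction that $M>0$, the case that either coordinate lies in $\partial\Omega$ is ruled out directly by the boundary estimate. In the interior case, maximality at $y^*$ gives $u(y)-u(y^*)\ge K(d(x^*,y)-s)$ where $s:=d(x^*,y^*)$; choosing $y$ along the extension of the geodesic from $x^*$ through $y^*$ (either $\ep$ into the interior or up to the boundary hit) yields $S^+_\ep u(y^*)\ge K$, hence $S^\pm_\ep u(y^*)=K$ by the interior bound and the equation. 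Letting $y^{**}$ realize $S^-_\ep u(y^*)=K$ and combining the defining equation with the maximality inequality for $\Phi$, the chain $K\rho_\ep(y^*,y^{**})\le K(s-d(x^*,y^{**}))\le K\,d(y^*,y^{**})\le K\rho_\ep(y^*,y^{**})$ collapses to equality, forcing $\rho_\ep(y^*,y^{**})=d(y^*,y^{**})>0$ and $d(x^*,y^{**})=s-d(y^*,y^{**})$; a direct substitution then gives
\begin{equation*}
\Phi(x^*,y^{**}) = M + 2K\,d(y^*,y^{**}) > M,
\end{equation*}
contradicting the maximality of $M$. Hence $M\le 0$, and by symmetry $|u(x)-u(y)|\le K\,d(x,y)$.

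The main obstacle I foresee is ensuring that the geodesic-extension step used to get $S^+_\ep u(y^*)\ge K$ is valid for general $\Omega$; in convex or otherwise reasonably regular domains this is routine, but in non-convex domains the geodesic may need to be understood in the path-metric sense and the extension argument adapted. The doubling-variables computation leading to $\Phi(x^*,y^{**})=M+2K\,d(y^*,y^{**})$ is the key cancellation and is the discrete analogue of the standard viscosity uniqueness computation.
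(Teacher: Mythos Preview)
Your first half---the sharp boundary estimate $|u(x)-g(z)|\le K\,d(x,z)$ via \LEM{cone-super} and \COR{comparison}, followed by the marching argument yielding $S^\pm_\ep u\le K$ everywhere---is correct and is exactly what the paper does. The difference is in how this is upgraded to a global Lipschitz bound. The paper defines $\tilde u(x):=\sup_{y}\bigl(u(y)-K\,d(x,y)\bigr)$, which is automatically $K$-Lipschitz and $\ge u$, verifies (using $S^+_\ep u\le K$) that $\tilde u$ is a subsolution with $\tilde u\le g$ on $\partial\Omega$, and then invokes \COR{comparison} to get $\tilde u\le u$, hence $\tilde u=u$. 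This packages the doubling into a single comparison.

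Your doubling-variables route is a reasonable alternative, but the key step contains a sign error. From $\Phi(x^*,y^{**})\le M=\Phi(x^*,y^*)$ one gets
\[
u(y^*)-u(y^{**})\;\le\;K\bigl(d(x^*,y^{**})-s\bigr),
\]
not $K\bigl(s-d(x^*,y^{**})\bigr)$. Combined with $u(y^*)-u(y^{**})=K\rho_\ep(y^*,y^{**})$ and the triangle inequality $d(x^*,y^{**})\le s+d(y^*,y^{**})\le s+\rho_\ep(y^*,y^{**})$, the chain collapses to equality and yields $d(x^*,y^{**})=s+d(y^*,y^{**})$, hence $\Phi(x^*,y^{**})=M$, \emph{not} $M+2K\,d(y^*,y^{**})$. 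So there is no one-step contradiction. (Your geodesic step also points the wrong way: to get $S^+_\ep u(y^*)\ge K$ you must move from $y^*$ \emph{toward} $x^*$, and this only works cleanly when $s\ge\ep$.)

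The argument is salvageable: since $(x^*,y^{**})$ is again a maximizer with $d(x^*,y^{**})=s+d(y^*,y^{**})\ge s+\ep$ (when $y^{**}\in\Omega$), you can iterate and march $y^{**}$ outward by at least $\ep$ at each step until it hits $\partial\Omega$, where the boundary estimate forces $\Phi\le 0<M$. But once you iterate like this you have essentially reinvented the paper's argument in a less tidy form; the sup-convolution framing avoids the iteration by letting \COR{comparison} do the work.
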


\begin{proof}
Suppose $u$ is the solution of \EQ{minlip-eq} and define $\tilde u : \bar \Omega \rightarrow \R$ by
\begin{equation*}
\tilde u(x) := \sup_{y \in \bar \Omega} \left( u(y) - K \path(x,y) \right).
\end{equation*}
Observe that $\tilde u$ is Lipschitz with constant $K$ with respect to the path metric $\path$. By \COR{comparison} it is enough to show that $\tilde u$ is a subsolution of \EQ{minlip-eq}.

By \COR{comparison} and \LEM{cone-super}, we know that $u \leq \varphi$ in $\Omega$, where
\begin{equation}
\label{eq:minlip-best}
\varphi(x) := g(x_0) + K \path(x_0, x),
\end{equation}
and $x_0 \in \partial \Omega$ is arbitrary. In particular, $\tilde u \leq g$ on $\partial \Omega$.

Fix $x \in \Omega$. Let $\delta > 0$ and use the continuity of $u$ to choose $y \in \Omega$ such that
\begin{equation*}
\tilde u(x) \leq u(y) - K \path(x,y) + \delta.
\end{equation*}
Next, choose $y^+ \in \bar \Omega(y, \ep)$ such that
\begin{equation*}
S^+_\ep u(y) \leq \frac{u(y^+) - u(y)}{\rho_\ep(y, y^+)},
\end{equation*}
and choose $x^+ \in \bar \Omega(x, \ep)$ as close as possible to $y^+$. Observe that
\begin{equation*}
\path(x^+, y^+) \leq \path(x,y) \quad \mbox{and} \quad \path(x^+, y^+) + \rho_\ep(x, x^+) \leq \path(x,y) + \rho_\ep(y, y^+).
\end{equation*}

If we repeat the marching argument of \LEM{icont}, and use the inequality $u \leq \varphi$ in place of \EQ{bcont-rhobound} when deriving \EQ{icont-eq2}, then we obtain
\begin{equation*}
S^+_\ep u(y) \leq K.
\end{equation*}
Thus, we may compute,
\begin{align*}
S^+_\ep \tilde u(x) & \geq \frac{\tilde u(x^+) - \tilde u(x)}{\rho_\ep(x,x^+)} \\
& \geq \frac{u(y^+) - u(y)}{\rho_\ep(x,x^+)} - K \frac{d(x^+,y^+) - d(x,y)}{\rho_\ep(x,x^+)} -  \frac{\delta}{\rho_\ep(x,x^+)} \\
& \geq \frac{u(y^+) - u(y)}{\rho_\ep(x,x^+)} - S^+_\ep u(y) \frac{\rho_\ep(y, y^+) - \rho_\ep(x,x^+)}{\rho_\ep(x,x^+)} - \frac{\delta}{\rho_\ep(x,x^+)} \\
& \geq S^+_\ep u(y) - \frac{\delta}{\dist(x, \partial \Omega)}.
\end{align*}
By a symmetric calculation, we obtain
\begin{equation*}
S^-_\ep \tilde u(x) \leq S^-_\ep u(y) + \frac{\delta}{\dist(x, \partial \Omega)},
\end{equation*}
and thus
\begin{equation*}
- \Delta_\infty^\ep \tilde u(x) \leq - \Delta_\infty^\ep u(y) + \frac{2 \delta}{\ep \dist(x, \partial \Omega)} \leq \frac{2 \delta}{\ep \dist(x, \partial \Omega)}.
\end{equation*}
Sending $\delta \to 0$ yields $- \Delta_\infty^\ep \tilde u(x) \leq 0$ and the proposition.
\end{proof}

We conclude this section by proving \THM{small-f-uniqueness}. Our proof is a compactness argument, using \THM{no-extrema-comparison} and the following lemma.

\begin{lem}\label{lem:constant-patch}
Assume that $\Omega$ is convex and $u \in C(\bar \Omega)$ is a solution of
\begin{equation}\label{eq:constant-patch-eq}
-\Delta_\infty^\ep u = 0 \quad \mbox{in} \ \Omega.
\end{equation}
Suppose that $S^+_\ep u(x_0) = 0$ for some $x_0 \in \bar \Omega_\ep$. Then $u$ is constant on $\bar \Omega$.
\end{lem}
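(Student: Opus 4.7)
The plan has two main parts: first, upgrade the pointwise hypothesis at $x_0$ to local constancy of $u$ on a full ball around $x_0$, and then propagate this constancy to all of $\bar\Omega$. For the first part, convexity of $\Omega$ together with $x_0 \in \bar\Omega_\ep$ identifies $\bar\Omega(x_0,\ep)$ with the Euclidean ball $\bar B(x_0,\ep) \subseteq \bar\Omega$. The hypothesis $S^+_\ep u(x_0) = 0$ gives $u(y) \leq u(x_0)$ for every $y \in \bar B(x_0,\ep)$, and the equation $-\Delta_\infty^\ep u(x_0) = 0$ forces the symmetric identity $S^-_\ep u(x_0) = 0$, whence $u(y) \geq u(x_0)$ on the same ball. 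Therefore $u \equiv u(x_0)$ on $\bar B(x_0,\ep)$.

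For the propagation, introduce the closed level set $E := \{x \in \bar\Omega : u(x) = u(x_0)\}$, which contains $\bar B(x_0,\ep)$, and the expansion radius $r^* := \sup\{r \geq \ep : \bar B(x_0,r) \cap \bar\Omega \subseteq E\}$. The target is to show $r^* \geq \diam(\Omega)$, which gives $E = \bar\Omega$. Arguing by contradiction, for a suitable unit vector $v$ and $\eta \in (0,\ep)$ I would select a point $x_1 = x_0 + (r^* - \eta)v \in \bar\Omega_\ep$ (convexity of $\Omega$ being used to keep $x_1$ deep in the interior) so that $\bar B(x_1,\ep)$ extends beyond $\bar B(x_0,r^*)$. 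Since $\bar B(x_1,\ep) \cap \bar B(x_0,r^*) \subseteq E$, one has $u \equiv u(x_0)$ there; if in addition $u \leq u(x_0)$ on the outer slab $\bar B(x_1,\ep) \setminus \bar B(x_0,r^*)$, then $S^+_\ep u(x_1) = 0$, and repeating the first-part reasoning at $x_1$ yields $u \equiv u(x_0)$ on $\bar B(x_1,\ep)$, contradicting the maximality of $r^*$.

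The main obstacle is to produce a direction $v$ for which this upper bound on the outer slab holds --- and, by the symmetric argument applied to $-u$ (which also solves the equation and has $S^+_\ep(-u)(x_0) = S^-_\ep u(x_0) = 0$), the matching lower bound $u \geq u(x_0)$. I would combine two tools for this: \COR{comparison}, applied to $u$ against the constant function $u(x_0)$ (which has no strict $\ep$-local extrema), gives $\max_{\bar\Omega} u = \max_{\partial\Omega} u$ and $\min_{\bar\Omega} u = \min_{\partial\Omega} u$, forcing the global extrema of $u$ onto $\partial\Omega$; and the marching monotonicity $S^+_\ep u(x_{k+1}) \geq S^+_\ep u(x_k)$ along forward marches, valid for $f \equiv 0$ as in the proof of \LEM{icont}, allows one to trace a forward march from any putative violator $y \in \bar B(x_1,\ep)$ with $u(y) > u(x_0)$ out to $\partial\Omega$ and derive a contradiction with the constancy of $u$ on $\bar B(x_0,r^*)$ and the finiteness of $\diam(\Omega)$.
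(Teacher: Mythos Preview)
Your first step is fine: from $S^+_\ep u(x_0)=0$ and the equation you correctly get $u\equiv u(x_0)$ on $\bar B(x_0,\ep)$. The gap is in the propagation. The marching argument you sketch does not produce a contradiction. Suppose $y\in\bar B(x_1,\ep)$ with $u(y)>u(x_0)$. A forward march $y=y_0,y_1,\ldots$ with $u(y_{k+1})=\max_{\bar B(y_k,\ep)}u$ increases $u$ and, as in \LEM{icont}, terminates at some $y_N\in\partial\Omega$ with $u(y_N)\geq u(y)>u(x_0)$. That is entirely compatible with the setup: you already granted (via \COR{comparison}) that $\max_{\bar\Omega}u$ is attained on $\partial\Omega$, and nothing forces the march to revisit $\bar B(x_0,r^*)$. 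So neither ``constancy on $\bar B(x_0,r^*)$'' nor ``finiteness of $\diam(\Omega)$'' is contradicted. The same remark applies to the symmetric march for $-u$. In short, knowing $u\equiv u(x_0)$ on one ball gives no control of $u$ on the sliver $\bar B(x_1,\ep)\setminus\bar B(x_0,r^*)$, and marching does not supply it. (There is also a minor issue: $x_1=x_0+(r^*-\eta)v\in\bar\Omega_\ep$ is not guaranteed by convexity for an arbitrary direction $v$.)

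The paper's argument is genuinely different and supplies exactly the missing local information. Rather than propagating constancy of $u$, it shows the set $\{x\in\bar\Omega_\ep:S^+_\ep u(x)=0\}$ is relatively open. One sets $\alpha:=\max_{\bar B(x_0,\ep/2)}S^+_\ep u$ and, assuming $\alpha>0$, takes the smallest radius $r\in(0,\ep/2]$ at which $\alpha$ is attained, say at $y_0$ with $|y_0-x_0|=r$. The key geometric step is that both the $S^+_\ep$-realizer $y_+$ and the $S^-_\ep$-realizer $y_-$ at $y_0$ must satisfy $|y_\pm-y_0|=\dist(y_\pm,B(x_0,r))$; otherwise some $z\in B(x_0,r)$ would see $y_\pm$ in its $\ep$-ball and would have $S^+_\ep u(z)\geq\alpha$, contradicting minimality of $r$. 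Thus $y_+$ and $y_-$ both lie on the ray from $x_0$ through $y_0$ at the same distance, forcing $y_+=y_-$ and contradicting $\alpha>0$. Connectedness of $\bar\Omega_\ep$ then gives $S^+_\ep u\equiv 0$ there, hence $u$ is constant. This local maximality-of-$S^+_\ep u$ argument is the idea your proposal lacks.
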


\begin{proof}
As $\Omega$ is convex, we have $\bar \Omega(x,\ep) = \bar B(x,\ep) \cap \bar \Omega$ and the path metric $d$ is equal to the Euclidean metric. Define
\begin{equation*}
\alpha : = \max_{x \in \bar B(x_0, \ep /2 )} S^+_\ep u(x).
\end{equation*}
We claim that $\alpha = 0$. Suppose on the contrary that $\alpha > 0$. Then
\begin{equation*}
r : = \min \left\{ 0 \leq s \leq \frac{\ep}{2} : S^+_\ep u(y) = \alpha \ \mbox{for some} \ y \in \bar B(x_0,s) \right\} > 0.
\end{equation*}
Select $y_0$ such that $|x_0 - y_0| = r$ and $S^+_\ep u(y_0) = \alpha$. By \EQ{constant-patch-eq}, $S^-_\ep u(y_0) = \alpha$. Select points $y_+,y_-\in \bar \Omega(y_0,\ep)$ such that 
\begin{equation*}
u(y_+) - u(y_0) = \rho_\ep(y_0,y_+) S^+_\ep u(y_0) \quad \mbox{and} \quad u(y_0) - u(y_-) = \rho_\ep (y_0,y_-) S^-_\ep u(y_0)
\end{equation*}

We claim that
\begin{equation}\label{eq:constant-patch-wts}
\rho(y_0, y_\pm) = |y_0 - y_\pm| = \dist(y_\pm , B(x_0,r)).
\end{equation}
If the first equality in \EQ{constant-patch-wts} fails for $y_+$, then $|y_0 - y_+| < \ep$ and $y_+ \in \Omega$. But then we can find a point $z \in B(x_0,r)$ with $|z - y_+| < \ep$, and we deduce that
\begin{equation*}
\ep S^+_\ep u(z) \geq u(y_+) - u(z) = u(y_+) - u(y_0) = \ep \alpha,
\end{equation*}
a contradiction to the definition of $r$. If the second equality in \EQ{constant-patch-wts} fails for $y_+$, then by a similar argument we find a point $z \in B(x_0,r)$ with $S^+_\ep u(z) \geq \alpha$, a contradiction. Using equation \EQ{constant-patch-eq} and very similar arguments, we establish \EQ{constant-patch-wts} for $y_-$.

According to \EQ{constant-patch-wts}, both $y_+$ and $y_-$ lie on the ray from $x_0$ through $y_0$, and $|y_0 - y_\pm| = \min\{ \ep , \dist(y_0,\partial \Omega)\}$. Due to the convexity of $\Omega$, there is only one such point, and thus $y_+ = y_-$. This obviously contradicts the assumption that $\alpha > 0$. 

It follows that $S^+_\ep u(x) \equiv 0$ on $B(x_0,\ep/2)$. Hence the set
\begin{equation*}
\left\{ x \in \bar \Omega_\ep : S^+_\ep u(x) = 0 \right\}
\end{equation*}
is relatively open and closed in $\bar \Omega_\ep$. Since $\Omega$ is convex, the set $\bar\Omega_\ep$ is convex and hence connected. Thus $S^+_\ep u \equiv 0$ on $\bar \Omega_\ep$. Thus $u$ is constant on $\bar \Omega$.
\end{proof}

\begin{proof}[{\bf Proof of \THM{small-f-uniqueness}}]
Suppose on the contrary that for some $\ep > 0$ there is a sequence $\delta_j \downarrow 0$ and a sequence $\{ f_j \} \subseteq C(\Omega) \cap L^\infty(\Omega)$ such that $\| f_j \|_{L^\infty(\Omega)} \leq \delta_j$, and the boundary-value problem
\begin{equation}\label{eq:small-f-eq}
\left\{   \begin{aligned}
& -\Delta^\ep_\infty u = f_j & & \mbox{in} \ \Omega, \\
& u = g & & \mbox{on} \ \partial \Omega,
\end{aligned} \right.
\end{equation}
has more than one solution. In particular, by Theorems \ref{thm:no-extrema-comparison} and \ref{thm:existence}, there exists a solution $u_j$ of \EQ{small-f-eq} with a strict $\ep$-local maximum point $x_j$. We may further assume that $x_j \to x_0$ as $j\to \infty$. 

According to \LEM{bcont}, we have the estimate $\| u_j \|_{L^\infty(\Omega)} \leq C$ independently of $j$. Thus the functions defined
\begin{equation}\label{eq:small-f-ineq}
u(x) := \limsup_{j\to \infty} u_j(x) \quad \mbox{and} \quad v(x) = \liminf_{j\to \infty} u_j(x), \quad x\in \bar \Omega
\end{equation}
are bounded. It is not difficult to check that $u$ and $v$ satisfy the inequalities
\begin{equation*}
-\Delta_\infty^\ep u \leq 0 \leq -\Delta_\infty^\ep v \quad \mbox{in} \ \Omega.
\end{equation*}
Moreover, by the estimate \EQ{bcont-rhobound} it is clear that $u(y_k) \to u(y)$ and $v(y_k) \to v(y)$ whenever $y_k \to y\in \partial \Omega$. According to \LEM{uppersemi}, we have
\begin{equation*}
-\Delta_\infty^\ep u^* \leq 0 \leq -\Delta_\infty^\ep v_* \quad \mbox{in} \ \Omega,
\end{equation*}
and $u^* = g = v_*$ on $\partial \Omega$. Applying \COR{comparison}, we see that $u \leq u^* \leq v_*\leq v$ on $\bar \Omega$. The definitions \EQ{small-f-ineq} imply that $u \geq v$ on $\bar\Omega$, hence $u\equiv u^* \equiv v_*\equiv v$. Thus $u \in C(\bar\Omega)$ is a solution of
\begin{equation}\label{eq:small-f-eq-2}
\left\{   \begin{aligned}
& -\Delta^\ep_\infty u = 0 & & \mbox{in} \ \Omega, \\
& u = g & & \mbox{on} \ \partial \Omega.
\end{aligned} \right.
\end{equation}
Since $S^+_\ep u_j(x_j) = 0$ for all $j$, we deduce that $S^-_\ep u(x_0) = S^+_\ep u(x_0) = 0$. Thus $u$ is constant on the set $\bar \Omega(x_0,\ep)$. Since $u=g$ on $\partial \Omega$ and we assumed that $g$ is not constant on any neighborhood of $\partial \Omega$, we deduce that $x_0 \in \bar \Omega_\ep$. Applying \LEM{constant-patch}, we deduce that $u$ is constant on $\bar \Omega$, which contradicts our hypothesis on $g$. 
\end{proof}

%%%%%%%%%%%%%%%%%%%%%%%%%%%%%%%%%%%%%%%%%%%%%%%%
%%%%%%%%%%%%%%%%%%%%%%%%%%%%%%%%%%%%%%%%%%%%%%%%
%%%%%%%%%%%%%%%%%%%%%%%%%%%%%%%%%%%%%%%%%%%%%%%%
%%%%%%%%%%%%%%%%%%%%%%%%%%%%%%%%%%%%%%%%%%%%%%%%
%%%%%%%%%%%%%%%%%%%%%%%%%%%%%%%%%%%%%%%%%%%%%%%%
%%%%%%%%%%%%%%%%%%%%%%%%%%%%%%%%%%%%%%%%%%%%%%%%
%%%%%%%%%%%%%%%%%%%%%%%%%%%%%%%%%%%%%%%%%%%%%%%%

\section{Convergence}\label{sec:four}

In this section, we give our first proof of \THM{convergence}. While the second proof in \SEC{five} is much faster, this first proof has several advantages. For example, it includes an explicit derivation of the continuum infinity Laplace equation from the finite difference equation by Taylor expansion. Also, it is a PDE analogue of the probabilistic proof of convergence appearing in \cite{Peres:2009}, and is thus of independent interest.

Demonstrating the convergence, as $\ep \to 0$, of the value functions for $\ep$-step tug-of-war to a viscosity solution of the infinity Laplace equation is tricky in the case that $f \not \equiv 0$ due to the singularity of the infinity Laplacian. In \cite{Peres:2009}, this difficulty was overcome using an interesting probabilistic argument. The authors introduced a slightly modified tug-of-war game in which one of the players is given more options, and thus a small advantage. The value functions of these \emph{favored} tug-of-war games are shown to be very close to those of standard tug-of-war, and also possess a monotonicity property along diadic sequences $\{ 2^{-k}\ep \}_{k=1}^\infty$. This monotonicity property was used by the authors of \cite{Peres:2009} to prove convergence.

From our point of view, the source of this monotonicity property is a discrete version of the \emph{increasing slope estimate} (the continuum version of which is \LEM{slope-estimates}, below). Observe that, if $u \in USC(\bar\Omega)$ satisfies $-\Delta^\ep_\infty u \leq f$ and $x,y\in \Omega$ satisfy
\begin{equation*}
\ep S^+_\ep u(x) = u(y) - u(x),
\end{equation*}
then
\begin{equation*}
\ep S^+_\ep u(y) \geq \ep S^-_\ep u(y) - \ep^2 f(y) \geq u(y) - u(x) - \ep^2 f(y) = \ep S^+_\ep u(x) - \ep^2 f(y).
\end{equation*}
This simple fact is the basis for the following analytic analogue of the monotonicity properties possessed by the favored tug-of-war games.

\begin{lem} \label{lem:max-over-balls}
Suppose that $u \in USC(\bar\Omega)$ satisfies
\begin{equation}
-\Delta_\infty^\ep u \leq f \quad \mbox{in} \ \Omega_\ep.
\end{equation}
Denote $\tilde u(x) := \max_{\bar B(x,\ep)} u$ for each $x \in \Omega_\ep$, and $\tilde f(x) := \max_{\bar B(x,2\ep)} f$ for every $x \in \Omega_{2\ep}$. Then the function $\tilde{u}$ satisfies the inequality
\begin{equation}
-\Delta_\infty^{2\ep} \tilde u \leq \tilde f \quad \mbox{in} \ \Omega_{3\ep}.
\end{equation}
\end{lem}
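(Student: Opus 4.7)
The plan is to reduce the desired inequality to a scale-doubling statement for $u$ itself at the argmax point, via the decomposition
\[
-\Delta_\infty^{2\ep}\tilde u(x) \;\leq\; -\Delta_\infty^{2\ep}u(z_x) \;\leq\; \tilde f(x),
\]
where $z_x\in\bar B(x,\ep)$ satisfies $u(z_x)=\tilde u(x)$.

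Step 1 (choice of $z_x$ and reduction to $u$). Fix $x\in\Omega_{3\ep}$. Since $u\in\USC(\bar\Omega)$, choose $z_x\in\bar B(x,\ep)$ realizing $\tilde u(x)$; then $z_x\in\Omega_{2\ep}\subset\Omega_\ep$, so the subsolution property $-\Delta_\infty^\ep u(z_x)\leq f(z_x)$ holds. Using $\bar B(y,\ep)\subset \bar B(x,3\ep)$ for $y\in\bar B(x,2\ep)$, one has
\[
\max_{\bar B(x,2\ep)}\tilde u \;=\;\max_{\bar B(x,3\ep)}u \;\geq\;\max_{\bar B(z_x,2\ep)}u.
\]
For the lower envelope, I would assign to each $y\in\bar B(x,2\ep)$ a point $\zeta_y\in\bar B(y,\ep)$ on the segment from $y$ to $z_x$ (taking $\zeta_y=z_x$ if $|y-z_x|\leq\ep$, otherwise at distance $\ep$ from $y$); since in either case $|\zeta_y-z_x|\leq 2\ep$, the bound $\tilde u(y)\geq u(\zeta_y)\geq \min_{\bar B(z_x,2\ep)}u$ yields
\[
\min_{\bar B(x,2\ep)}\tilde u \;\geq\; \min_{\bar B(z_x,2\ep)}u.
\]
Combined with $\tilde u(x)=u(z_x)$, these two inequalities give $-\Delta_\infty^{2\ep}\tilde u(x)\leq -\Delta_\infty^{2\ep}u(z_x)$.

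Step 2 (scale doubling for $u$). To show $-\Delta_\infty^{2\ep} u(z_x)\leq \tilde f(x)$, I would iterate the increasing-slope observation spelled out just before the lemma. Picking $\eta_1\in\bar B(z_x,\ep)$ that nearly realizes $S_\ep^+ u(z_x)$ forces $S_\ep^- u(\eta_1)\geq (u(\eta_1)-u(z_x))/\ep$; applying the subsolution property at $\eta_1\in\Omega_\ep$ then gives $\ep S_\ep^+ u(\eta_1)\geq \ep S_\ep^+ u(z_x)-\ep^2 f(\eta_1)$. A second up-step to some $\eta_2\in\bar B(\eta_1,\ep)\subset\bar B(z_x,2\ep)$ produces
\[
u(\eta_2)-u(z_x) \;\geq\; 2\ep S_\ep^- u(z_x) \;-\; O\!\left(\ep^2 \tilde f(x)\right),
\]
using FD at $z_x$ to replace $S_\ep^+$ by $S_\ep^-$. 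On the min side I would estimate, for any $\zeta\in\bar B(z_x,2\ep)$, the quantity $u(z_x)-u(\zeta)$ by a one- or two-step march (introducing the midpoint $w$ when $|z_x-\zeta|>\ep$ so that $w\in\bar B(z_x,\ep)\subset\Omega_\ep$ and the FD inequality is available at both $z_x$ and $w$). Combining the two bounds yields the desired $2u(z_x)-\max_{\bar B(z_x,2\ep)}u-\min_{\bar B(z_x,2\ep)}u\leq 4\ep^2\tilde f(x)$.

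The main obstacle is the sharp bookkeeping in Step 2: the naive marching argument in the downward direction tends to introduce an extra factor relative to the upward increasing-slope bound, which is harmless when $\tilde f(x)\geq 0$ but can fail in the regime $\tilde f(x)<0$ unless one exploits the fact that $z_x$ is the argmax of $u$ on $\bar B(x,\ep)$ (so the FD inequality at $z_x$ forces $S_\ep^- u(z_x)$ to be large enough to absorb the discrepancy).
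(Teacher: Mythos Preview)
Your Step 1 is correct and gives the clean reduction $-\Delta_\infty^{2\ep}\tilde u(x)\leq -\Delta_\infty^{2\ep}u(z_x)$. The gap is precisely where you flag it: the scale-doubling inequality $-\Delta_\infty^{2\ep}u(z_x)\leq \tilde f(x)$ in Step~2 is not established, and your proposed marching argument does not close. The upward two-step march is fine and yields $\max_{\bar B(z_x,2\ep)}u-u(z_x)\geq 2\ep S^+_\ep u(z_x)-\ep^2 f(\eta_1)$. The trouble is the downward side: for the minimizer $\zeta\in\bar B(z_x,2\ep)$ and your midpoint $w$, the FD inequality at $w$ leaves you with the term $\ep S^+_\ep u(w)$, and $w$ is an essentially arbitrary point of $\bar B(z_x,\ep)$ over which you have no control. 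Bounding $\max_{\bar B(w,\ep)}u\leq\max_{\bar B(z_x,2\ep)}u$ and $u(w)\geq u(z_x)-\ep S^-_\ep u(z_x)$ leads only to
\[
\bigl(u(z_x)-\min_{\bar B(z_x,2\ep)}u\bigr)-\bigl(\max_{\bar B(z_x,2\ep)}u-u(z_x)\bigr)\leq 2\ep S^-_\ep u(z_x)+\ep^2 f(w),
\]
and the right-hand side is of order $\ep$, not $\ep^2$. The fact that $z_x$ is the argmax on $\bar B(x,\ep)$ does not help, since in general $w\notin\bar B(x,\ep)$.

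The paper avoids this by \emph{not} routing both sides through $u$ at $z_x$. Writing $y=z_x$, it bounds $S^+_{2\ep}\tilde u(x)$ and $S^-_{2\ep}\tilde u(x)$ separately, and on the $S^-$ side it lands back at the \emph{original} center $x$: the segment trick gives $\min_{\bar B(x,2\ep)}\tilde u\geq\min_{\bar B(x,\ep)}u$, whence
\[
2\ep S^-_{2\ep}\tilde u(x)\leq u(y)-u(x)+u(x)-\min_{\bar B(x,\ep)}u=\ep S^+_\ep u(x)+\ep S^-_\ep u(x).
\]
On the $S^+$ side the increasing-slope chain $x\to y\to z$ (with $u(z)=\max_{\bar B(y,\ep)}u$), followed by one more use of the FD inequality at $x$ to convert $S^+_\ep u(x)$ into $S^-_\ep u(x)$, gives
\[
2\ep S^+_{2\ep}\tilde u(x)\geq \ep S^+_\ep u(x)+\ep S^-_\ep u(x)-\ep^2\bigl(f(x)+2f(y)+f(z)\bigr).
\]
The combination $\ep S^+_\ep u(x)+\ep S^-_\ep u(x)$ cancels and the four $f$-values are all taken in $\bar B(x,2\ep)$. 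The point is that no downward march is ever needed: the $S^-$ bound for $\tilde u$ comes for free from $\tilde u(x)=u(y)$ and a ball inclusion, and the bookkeeping is arranged so that the same expression appears on both sides.
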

\begin{proof}
Fix $x \in \Omega_{3\ep}$. Choose $y\in \bar B(x,\ep)$ and $z\in \bar B(y,\ep) \subseteq \bar B(x,2\ep)$ so that
\begin{equation*}
\ep S^+_\ep u(x) = u(y) - u(x) \ \ \mbox{and} \ \ \ep S^+_\ep u(y) = u(z) - u(y).
\end{equation*}
Notice that
\begin{equation*}
\ep S^+_\ep u(x) = u(y) - u(x) \leq \ep S^-_\ep u(y) \leq \ep S^+_\ep u(y) + \ep^2 f(y),
\end{equation*}
and similarly $\ep S^+_\ep u(y) \leq \ep S^+_\ep u(z) + \ep^2 f(z)$. Thus
\begin{align*}
2\ep S^+_{2\ep} \tilde{u} (x) & = \max_{\bar B(x,3\ep)} u - \max_{\bar B(x,\ep)} u \\
& = \max_{\bar B(x,3\ep)} u - u(z) + u(z) - u(y)\\
& \geq \ep S^+_\ep u(z) + \ep S^+_\ep u(y) \\
& \geq 2\ep S^+_\ep u(y) - \ep^2 f(z) \\
& \geq 2\ep S^+_\ep u(x) - \ep^2 \left(f(z) + 2f(y)\right) \\
& \geq S^+_\ep u(x) + S^-_\ep u(x) - \ep^2 \left(f(z) + 2f(y) + f(x)\right).
\end{align*}
We also estimate
\begin{align*}
2\ep S^-_{2\ep} \tilde u(x) & = u(y) - \min_{z\in \bar{B}(x,2\ep)} \max_{\bar B(z,\ep)} u \\
& \leq u(y) - u(x) + u(x) - \min_{\bar B(x,\ep)} u \\
& = \ep S^+_\ep u(x) + \ep S^-_\ep u(x).
\end{align*}
Combining these calculations, we see that
\begin{equation*}
S^-_{2\ep} \tilde{u} (x) - S^+_{2\ep} \tilde{u} (x) \leq \frac{4\ep^2}{2\ep} \tilde f(x) = (2\ep) \tilde f(x). \qedhere
\end{equation*}
\end{proof}

Before proving \THM{convergence}, we need to verify that the finite difference infinity Laplacian, $\Delta_\infty^\ep$, is close to the continuum infinity Laplacian, $\Delta_\infty$, away from the singularity of the latter.

\begin{lem}\label{lem:discrete-to-continuous}
For any $\varphi \in C^3( \Omega)$, there exists a constant $C>0$,
depending only on $\varphi$, such that
\begin{equation*}
-\Delta_\infty \varphi(x) \leq -\Delta_\infty^\ep\varphi(x) + C\left(
1+ |D\varphi(x)|^{-1} \right) \ep,
\end{equation*}
for every $x \in \Omega_{2 \ep_0}$ with $D\varphi(x) \neq0$ and $0 < \ep \leq \ep_0$.
\end{lem}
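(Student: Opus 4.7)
The plan is to Taylor expand $\varphi$ on $\bar B(x,\ep)$ and then carry out a careful optimization on the sphere, tracking how the error depends on $|D\varphi(x)|$. Since $x \in \Omega_{2\ep_0}$ and $\ep \leq \ep_0$, the ball $\bar B(x,\ep) \subseteq \Omega$ is uniformly separated from $\partial \Omega$, so $\bar B(x,\ep) = \bar \Omega(x,\ep)$ and $\rho_\ep(x,y) = \ep$ for every $y \in \bar B(x,\ep)$. Hence
\begin{equation*}
\ep S^+_\ep \varphi(x) = \sup_{|h|\leq \ep}[\varphi(x+h) - \varphi(x)], \qquad \ep S^-_\ep \varphi(x) = \sup_{|h|\leq \ep}[\varphi(x) - \varphi(x-h)].
\end{equation*}
Applying Taylor's formula and setting $h = \ep v$, there is a constant $C$ depending only on $\|D^3\varphi\|_\infty$ on a fixed neighborhood of $x$ so that, for all $|v|\leq 1$,
\begin{equation*}
\varphi(x+\ep v) - \varphi(x) = \ep\langle D\varphi, v\rangle + \tfrac{\ep^2}{2} \langle D^2\varphi\cdot v, v\rangle + R(v), \qquad |R(v)| \leq C\ep^3.
\end{equation*}
Thus it suffices to estimate the supremum over $|v|\leq 1$ of $F(v) := \langle D\varphi, v\rangle + \tfrac{\ep}{2}\langle D^2\varphi\cdot v, v\rangle$.

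For $\ep$ small enough that $\ep\|D^2\varphi\| < |D\varphi|$, $F$ is strictly increasing along every ray with $\langle D\varphi, v\rangle > 0$, so the supremum is attained on $|v|=1$. Writing $\hat n := D\varphi/|D\varphi|$ and parametrizing $v = \cos\theta\,\hat n + \sin\theta\, w$ with $w \perp \hat n$, $|w|=1$, a second-order Taylor expansion of $F$ about $\theta=0$ gives
\begin{equation*}
F(v) = |D\varphi| + \tfrac{\ep}{2}\Delta_\infty\varphi(x) - \tfrac{|D\varphi|}{2}\theta^2 + \ep\,\theta\,\langle D^2\varphi\,\hat n, w\rangle + O\bigl(|D\varphi|\theta^4 + \ep\theta^2\bigr).
\end{equation*}
The choice $v = \hat n$ gives the lower bound $F(\hat n) = |D\varphi| + \tfrac{\ep}{2}\Delta_\infty\varphi(x)$. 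For the matching upper bound, observe that the quadratic $\theta \mapsto -\tfrac{|D\varphi|}{2}\theta^2 + \ep\theta\,\|D^2\varphi\|$ has maximum $O(\ep^2/|D\varphi|)$ attained at $\theta^* = O(\ep/|D\varphi|)$, so
\begin{equation*}
\sup_{|v|=1} F(v) \leq |D\varphi| + \tfrac{\ep}{2}\Delta_\infty\varphi(x) + C\bigl(1 + |D\varphi|^{-1}\bigr)\ep^2.
\end{equation*}
Running the identical argument for $S^-_\ep\varphi$, where the extremizer is near $-\hat n$ and $\langle D^2\varphi(-\hat n), -\hat n\rangle = \Delta_\infty\varphi$, yields
\begin{equation*}
S^-_\ep \varphi(x) \geq |D\varphi| - \tfrac{\ep}{2}\Delta_\infty\varphi(x) - C\bigl(1+|D\varphi|^{-1}\bigr)\ep^2.
\end{equation*}

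Subtracting these estimates and dividing by $\ep$ produces
\begin{equation*}
-\Delta_\infty^\ep\varphi(x) = \tfrac{1}{\ep}\bigl(S^-_\ep\varphi(x) - S^+_\ep\varphi(x)\bigr) \geq -\Delta_\infty\varphi(x) - C\bigl(1 + |D\varphi|^{-1}\bigr)\ep,
\end{equation*}
which is the desired one-sided bound. The main obstacle is the spherical optimization step, and in particular identifying $|D\varphi|^{-1}$ as the correct measure of degeneracy: the preference for $\hat n$ comes with curvature $|D\varphi|$ in $\theta$, so a small gradient allows the $O(\ep)$ cross-curvature term $\ep\theta\langle D^2\varphi\,\hat n, w\rangle$ to push the true maximizer off $\hat n$ by angle $O(\ep/|D\varphi|)$, inflating the error exactly by the factor $|D\varphi|^{-1}$. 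In the residual regime $|D\varphi| \lesssim \ep$, the crude bound $|S^\pm_\ep\varphi(x) - |D\varphi|| \leq C\ep$ (obtained by dropping the Hessian term in $F$) absorbs into the same $(1 + |D\varphi|^{-1})\ep$ error, so no separate argument is needed.
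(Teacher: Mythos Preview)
Your proof is correct and follows essentially the same approach as the paper: both Taylor expand $\varphi$ on $\bar B(x,\ep)$ and show that the maximizer lies within $O(\ep/|D\varphi|)$ of the gradient direction $\hat n$, so the maximum exceeds the value at $\hat n$ by at most $O(\ep^3/|D\varphi|)$. The paper obtains this by directly Taylor expanding the difference $\varphi(x+\ep w)-\varphi(x+\ep\hat n)$ for the true maximizer $w$ and bootstrapping $|w-\hat n|\le C\ep|D\varphi|^{-1}$, which handles all $\ep$ uniformly; your angular parametrization reaches the same conclusion but requires the separate ``residual regime'' argument when $|D\varphi|\lesssim\ep$.
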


\begin{proof}
Fix such an $x$ and $\ep$, and denote $v = D\varphi(x) /
|D\varphi(x)|$. Let $w \in \bar B(0,1)$ such that $\varphi(x+\ep w) =
\max_{\bar B(x,\ep)} \varphi$. We may Taylor expand to get
\begin{equation}\label{eq:lemma-convergence-taylor}
\begin{aligned}
0 & \leq \varphi(x + \ep w) - \varphi(x + \ep v) \\
& \leq \ep (w-v) \cdot D\varphi(x) + \frac{\ep^2}{2} \left(
\left\langle D^2\varphi(x) w, w \right\rangle - \left\langle
D^2\varphi(x) v, v \right\rangle \right) \\
& \qquad \qquad + C\left\| D^3\varphi \right\|_{L^\infty(\bar
\Omega))} \ep^3 |v-w|. \\
& = \ep |D\varphi(x)| (w-v) \cdot v + \frac{\ep^2}{2} \left\langle D^2\varphi(x) (w-v), w+v \right\rangle \\
& \qquad \qquad + C\left\| D^3\varphi \right\|_{L^\infty(\bar
\Omega))} \ep^3 |v-w|.
\end{aligned}
\end{equation}
We may divide by $\ep |D\varphi(x)|$ and rearrange to get
\begin{equation}\label{eq:dotted}
|v|^2 - v\cdot w \leq C \left| D\varphi(x) \right|^{-1} \ep |v-w|.
\end{equation}
Thus
\begin{equation}
|v-w|^2 = |v|^2 + |w|^2 - 2v\cdot w \leq 2|v|^2 - 2v\cdot w \leq C
|D\varphi(x)|^{-1} \ep |v-w|.
\end{equation}
Hence $|v-w| \leq C |D\varphi(x)|^{-1} \ep$. Inserting this estimate into
\EQ{dotted}, we get
\begin{equation*}
0 \leq v\cdot(v-w) \leq C \left| D\varphi(x) \right|^{-1} \ep^2.
\end{equation*}
In turn, inserting this estimate into \EQ{lemma-convergence-taylor} after
dividing the latter by $|D\varphi(x)|$ yields
\begin{equation*}
\varphi( x + \ep w) - \varphi(x + \ep v) \leq C
\left|D\varphi(x)\right|^{-1} \ep^3.
\end{equation*}
In a similar way, we know, for $\tilde{w} \in B(0,1)$ such that
$\varphi(x- \ep \tilde w) = \min_{\bar B(x,\ep)} \varphi$, that
\begin{equation*}
\varphi( x - \ep v) - \varphi(x - \ep \tilde w) \leq C
\left|D\varphi(x)\right|^{-1} \ep^3.
\end{equation*}
Hence
\begin{align*}
-\Delta_\infty \varphi(x) & \leq \ep^{-2} \left( 2\varphi(x) -
\varphi(x+\ep v) - \varphi(x-\ep v) \right) + C\ep \\
& \leq \ep^{-2} \left( 2\varphi(x) - \varphi(x+\ep w) - \varphi(x-\ep
\tilde{w}) \right) + C\left( 1+ |D\varphi(x)|^{-1} \right) \ep \\
& = -\Delta_\infty^\ep \varphi(x) + C\left( 1+ |D\varphi(x)|^{-1}
\right) \ep. \qedhere
\end{align*}
\end{proof}

\begin{proof}[{\bf First proof of \THM{convergence}}]
Suppose that $\varphi$ is a quadratic polynomial and the map $x \mapsto u(x) - \varphi(x)$ has a strict local maximum at $x = x_0 \in \Omega$. Without loss of generality, we may assume that $x_0 =0$ and $\varphi(x_0) = \varphi(0) = 0$. We may write $\varphi(x) = p \cdot x + \frac{1}{2} \left\langle M x, x \right\rangle$, for $p = D\varphi(0)$ and $M =D^2\varphi(0)$. We must show that
\begin{equation}\label{eq:convergence-viscosity-wts}
-\ILP \varphi(0) \leq f(0).
\end{equation}

We will consider the cases $p\neq 0$ and $p = 0$ separately. In the former case, $|D\varphi|$ is bounded away from zero near the origin. Since $u_j \to u$ locally uniformly, for large enough $j$ we may select $x_j \in \bar B(0,\ep_j)$ such that
\begin{equation*}
u_j(x_j) - \varphi(x_j) = \max_{\bar B(x_j,\ep_j)} (u_j - \varphi),
\end{equation*}
and $x_j \to 0$ as $j\to \infty$. We have
\begin{equation*}
-\Delta_\infty^{\ep_j} \varphi(x_j) \leq -\Delta_\infty^{\ep_j}u_j(x_j) = f(x_j).
\end{equation*}
Using \LEM{discrete-to-continuous} and the upper semicontinuity of $\ILP\varphi$ near $0$, we deduce that
\begin{align*}
-\ILP\varphi(0) = \lim_{j\to\infty} -\ILP\varphi(x_j) \leq \limsup_{j \to \infty} -\Delta_\infty^{\ep_j} \varphi(x_j) \leq \limsup_{j\to\infty} f(x_j) = f(0).
\end{align*}
Thus \EQ{convergence-viscosity-wts} holds, in the case that $p \neq 0$.

We now consider the more subtle case that $p=D\varphi(0) = 0$. We must show that
\begin{equation}\label{eq:convergence-viscosity-wts-2}
- \max_{|w|=1} \left\langle M w , w \right\rangle \leq f(0).
\end{equation}
Fix a large positive integer $k\geq 1$. For each $j$, define the functions
\begin{equation*}
v_{j} (x) : = \max_{\bar B(x, R \ep_j)} u_j, \quad f_{j} (x): = \max_{\bar B(x, 2R \ep_j)} f,
\end{equation*}
where $R := 2^k-1$. According to \LEM{max-over-balls}, for every $j$ we have
\begin{equation}\label{eq:convergence-max-balls}
- \Delta_\infty^{2^k\ep_j} v_{j} \leq f_{j} \quad \mbox{in} \ \Omega_{3 R\ep_j}
\end{equation}
It is clear that $v_{j} \to u$ and $f_{j} \to f$ locally uniformly in $\Omega$ as $j\to \infty$. Thus we may select a sequence $x_j \to x_0$ such that for large enough $j$, the function $v_j - \varphi$ attains its maximum in the ball $\bar B (x_j,2^k\ep_j)$ at $x_j$.

Let $z_j \in \bar B(0,1)$ such that
\begin{equation*}
\max_{\bar B(x_j,2^k\ep_j)} \varphi = \varphi(x_j+2^k\ep_jz_j).
\end{equation*}
We claim that we may choose $z_j$ such that
\begin{equation}\label{eq:convergence-push-to-edge}
 |z_j| \geq 1- 2^{-k}  \quad \mbox{for sufficiently large} \ j\geq 1.
\end{equation}
If $\varphi$ has no strict local maximum in $B(x_j,2^k\ep_j)$, then we may choose $|z_j|=1$. If $\varphi$ does have a strict local maximum in $B(x_j,2^k\ep_j)$, then observe that $M$ is negative definite and the strict local maximum must be at the origin. Thus,
\begin{equation}\label{eq:convergence-local-max}
x_j +2^k \ep_j z_j = 0,
\end{equation}
and $u$ also has a strict local maximum at the origin. For large enough $j$, this implies that $v_j( x) = u(0)$ for every $x\in \bar B(0, R\ep_j)$. These facts imply that $v_j - \varphi$ cannot have a local maximum in the ball $B(0,R \ep_j)$. Thus
\begin{equation*}
|x_j | \geq R \ep_j.
\end{equation*}
Recalling \EQ{convergence-local-max}, we see that
\begin{equation*}
|z_j| =  \frac{|x_j |}{2^k \ep_j} \geq \frac{R}{2^k} = 1 - 2^{-k}.
\end{equation*}

Having demonstrated \EQ{convergence-push-to-edge}, we calculate
\begin{align*}
f_j(x_j) & \geq -\Delta_\infty^{2^k\ep_j} v_j(x_j)  \\
& \geq -\Delta_\infty^{2^k\ep_j} \varphi(x_j) \\
& = \frac{1}{4^k\ep_j^2} \left( 2\varphi(x_j) - \max_{\bar B(x_j, 2^k\ep_j)} \varphi - \min_{\bar B(x_j, 2^k\ep_j)} \varphi \right) \\
& \geq \frac{1}{4^k\ep_j^2} \left( 2\varphi(x_j) - \varphi(x_j + 2^k\ep_j z_j) - \varphi(x_j - 2^k\ep_j z_j) \right) \\
& = \frac{1}{4^k\ep_j^2} \left( \left\langle M x_j, x_j \right\rangle - \frac{1}{2}\left\langle M(x_j + 2^k\ep_j z_j) , x_j + 2^k\ep_j z_j \right\rangle \right. \\
& \qquad \qquad \left. - \frac{1}{2}\left\langle M(x_j - 2^k\ep_j z_j) , x_j - 2^k\ep_j z_j \right\rangle \right).
\end{align*}
Rearranging, we obtain
\begin{equation*}
-\left\langle Mz_j , z_j \right\rangle \leq f_j(x_j).
\end{equation*}
By passing to limits along a subsequence and using \EQ{convergence-push-to-edge}, we deduce that
\begin{equation*}
-\left\langle Mz , z \right\rangle \leq f(0),
\end{equation*}
for some vector $z$ satisfying $1-2^{-k} \leq |z| \leq 1$. In particular, we have
\begin{equation*}
- \max_{|w| = 1} \left\langle Mw, w\right\rangle \leq \left( 1 - 2^{-k} \right)^{-2} f(0).
\end{equation*}
Sending $k \to \infty$, we obtain \EQ{convergence-viscosity-wts-2}, and the proof is complete.
\end{proof}

\begin{remark}
In the recent paper by Charro, Garc\'{\i}a Azorero, and Rossi \cite{Charro:2009}, a proof that the value functions for standard $\ep$-turn tug-of-war converge as $\ep \to 0$ to the unique viscosity solution of \EQ{existence-continuous} is attempted in the case that $f\equiv 0$, and with mixed Dirichlet-Neumann boundary conditions. The authors mistakenly state that the value functions for standard $\ep$-step tug-of-war are continuous, an assumption which is needed in their proof. However, we believe the argument in \cite{Charro:2009} can be repaired with straightforward modifications.
\end{remark}

The following proposition is needed in order to obtain \COR{existence-continuous}.

\begin{prop}\label{prop:equicontinuous}
Let $\{ \ep_j \}_{j=1}^\infty$ be a sequence of positive real numbers converging to $0$, and suppose that for each $j$, the function $u_j \in C(\bar\Omega)$ is a solution of \EQ{convergence} with $u_j = g$ on $\partial \Omega$. Then the sequence $\{ u_j \}$ is uniformly equicontinuous.
\end{prop}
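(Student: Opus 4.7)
The key input is \LEM{epcont} (stated earlier), which supplies a modulus $\omega \in C[0,\infty)$ depending only on $\diam(\Omega)$, the modulus $\omega_g$, and $\|f\|_{L^\infty(\Omega)}$, with
\begin{equation*}
|u_j(x) - u_j(y)| \leq \omega(\rho_{\ep_j}(x,y)) \quad \mbox{for all} \ x,y \in \bar \Omega.
\end{equation*}
The crucial feature is that $\omega$ is \emph{independent of $j$}. My plan is to convert this $\rho_{\ep_j}$-modulus into an honest modulus with respect to the path distance $\path$ (which then also controls the Euclidean distance, since $|x-y|\leq \path(x,y)$), by exploiting the fact that $\ep_j \to 0$.

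From the definition of $\rho_\ep$, we have the elementary bound $\rho_{\ep_j}(x,y) \leq \path(x,y) + \ep_j$ for all $x,y\in\bar\Omega$ (the inequality is trivial when one of $x,y$ lies on $\partial\Omega$, and otherwise $\rho_{\ep_j}(x,y) = \max\{\path(x,y),\ep_j\}\le \path(x,y)+\ep_j$). Consequently,
\begin{equation*}
|u_j(x) - u_j(y)| \leq \omega(\path(x,y) + \ep_j)
\end{equation*}
for every $j$ and every $x,y\in\bar\Omega$.

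To produce a common modulus, fix $\eta > 0$ and use continuity of $\omega$ at $0$ to pick $\delta>0$ with $\omega(2\delta) < \eta$. Split the sequence into the ``small-$\ep$'' indices $J_1 := \{j : \ep_j < \delta\}$ and the ``large-$\ep$'' indices $J_2 := \{j : \ep_j \geq \delta\}$. Since $\ep_j \to 0$, the set $J_2$ is finite. For $j \in J_1$, whenever $\path(x,y) < \delta$ we get $\rho_{\ep_j}(x,y) \leq 2\delta$, hence $|u_j(x)-u_j(y)| < \eta$. For each of the finitely many $j\in J_2$, the function $u_j$ is continuous on the compact set $\bar\Omega$, hence uniformly continuous with respect to $\path$, yielding a $\delta_j > 0$ with $|u_j(x)-u_j(y)| < \eta$ whenever $\path(x,y) < \delta_j$. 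Taking $\delta' := \min(\{\delta\} \cup \{\delta_j : j\in J_2\}) > 0$ then gives uniform equicontinuity of the full family.

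There is really no serious obstacle here: the whole proof is a two-line bookkeeping argument once \LEM{epcont} is in hand. The only subtlety worth flagging is the step $\rho_{\ep_j}(x,y) \le \path(x,y)+\ep_j$, which is what allows one to pass from a modulus in the $\ep$-dependent pseudo-distance $\rho_\ep$ to a genuine modulus in $\path$ (and hence in $|\cdot|$) that is uniform in $j$.
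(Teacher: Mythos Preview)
Your argument is correct and follows essentially the same route as the paper: invoke \LEM{epcont} to get a single modulus $\omega$ valid for all $j$, split the sequence into a finite ``large $\ep_j$'' piece (handled by individual uniform continuity) and a tail where $\ep_j$ is small (handled by $\omega$), and take the minimum of the resulting $\delta$'s.

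One small slip worth noting: your parenthetical remark that a modulus in $\path$ ``also controls the Euclidean distance, since $|x-y|\leq \path(x,y)$'' has the implication backwards. The inequality $|x-y|\le\path(x,y)$ means that small $|x-y|$ does \emph{not} force small $\path(x,y)$, so equicontinuity with respect to $\path$ does not formally yield equicontinuity with respect to $|\cdot|$ from that inequality alone. The paper's own proof glosses over the same point. For the intended application (Arzela--Ascoli and local uniform convergence in $\Omega$) this is harmless, since on any compact $K\subset\Omega$ the two metrics agree for sufficiently close points; but the justification you give is not the right one.
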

\begin{proof}
Let $\delta > 0$. According to \LEM{epcont}, there exists a modulus $\omega:[0,\infty) \to [0,\infty)$, independent of $j$, such that
\begin{equation*}
|u_j(x) - v_j(y) | \leq \omega\left(  \rho_{\ep_j}(x,y) \right),
\end{equation*}
for all $x, y \in \bar \Omega$. Choose a positive integer $m\geq 1$ so large that $\omega(\ep_j) < \delta$ for all $j \geq m$. Also choose a constant $\gamma > 0$ so small that $|u_j(x) - u_j(y) | < \delta$ whenever $|x-y| < \gamma$ and $1 \leq j \leq m-1$. Set $\eta := \min\{ \gamma, \ep_m\}$. Then clearly $|u_j(x) - u_j(y) | < \delta$ for all $j \geq 1$ and all $|x-y| < \eta$.
\end{proof}

\COR{existence-continuous} is now immediately obtained from Theorems \ref{thm:existence} and \ref{thm:convergence}, \PROP{equicontinuous} and the Arzela-Ascoli theorem.

%%%%%%%%%%%%%%%%%%%%%%%%%%%%%%%%%%%%%%%%%%%%%%%%
%%%%%%%%%%%%%%%%%%%%%%%%%%%%%%%%%%%%%%%%%%%%%%%%
%%%%%%%%%%%%%%%%%%%%%%%%%%%%%%%%%%%%%%%%%%%%%%%%
%%%%%%%%%%%%%%%%%%%%%%%%%%%%%%%%%%%%%%%%%%%%%%%%
%%%%%%%%%%%%%%%%%%%%%%%%%%%%%%%%%%%%%%%%%%%%%%%%
%%%%%%%%%%%%%%%%%%%%%%%%%%%%%%%%%%%%%%%%%%%%%%%%
%%%%%%%%%%%%%%%%%%%%%%%%%%%%%%%%%%%%%%%%%%%%%%%%

\section{Applications to the Infinity Laplace Equation} \label{sec:five}

In this section we study solutions of the (continuum) infinity Laplace equation. Before beginning, we extend our definition of viscosity subsolution. In \DEF{viscosity} we defined $- \Delta_\infty u \leq f$ only for continuous $f$. However, the definition makes no use of the continuity of $f$, and thus $- \Delta_\infty u \leq h$ makes sense for any $h : \Omega \rightarrow \R$. We use this for $h \in USC(\Omega)$ below.

The following lemma establishes that solutions of the infinity Laplace equation ``enjoy comparisons with quadratic cones," and is a special case of \cite[Theorem 2.2]{Lu:2008}. A slightly less general observation appeared in \cite{Peres:2009}. The notion of \emph{comparisons with cones} was introduced in the work of Crandall, Evans, and Gariepy \cite{Crandall:2001} for infinity harmonic functions. Our proof is new, and much simpler than the one given in \cite{Lu:2008}.

%%%%%%%%%
%%%%%%%%%
%%%%%%%%%

\begin{lem}
\label{lem:cca}
Suppose that $V$ is a bounded, open subset of $\R^n$, and $u \in USC(\bar V)$ satisfies
\begin{equation}\label{eq:cca-1}
- \Delta_\infty u \leq c \quad \mbox{in} \ V.
\end{equation}
Let $x_0 \in \R^n$ and $a, b, c \in \R$. If the quadratic cone $\varphi$ given by
\begin{equation}\label{eq:cca-2}
\varphi(x) := a + b |x - x_0| - \frac{c}{2} |x - x_0|^2
\end{equation}
satisfies
\begin{equation}\label{eq:cca-3}
\varphi \in C^2(V) \quad \mbox{and} \quad u \leq \varphi \ \mbox{on} \ \partial V,
\end{equation}
then
\begin{equation}
\label{eq:cca-ineq}
u \leq \varphi \quad \mbox{in}  \ V.
\end{equation}
\end{lem}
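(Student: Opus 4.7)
The plan is an indirect argument: perturb $\varphi$ to a \emph{strict} classical supersolution of $-\Delta_\infty w = c$, then derive a contradiction with the viscosity definition of subsolution.

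First I would verify by direct computation that $\varphi$ itself is a classical solution of $-\Delta_\infty\varphi = c$ in $V$. Writing $r := |x - x_0|$ and $\nu := (x-x_0)/r$, one finds
\[
D\varphi = (b - cr)\nu, \qquad D^2\varphi = -c\,\nu\otimes\nu + \tfrac{b-cr}{r}(I - \nu\otimes\nu),
\]
so that $\langle D^2\varphi\cdot D\varphi, D\varphi\rangle = -c\,|D\varphi|^2$, hence $-\Delta_\infty\varphi(x) = c$ at every $x$ where $D\varphi(x) \neq 0$. The hypothesis $\varphi \in C^2(V)$ forces either $b = 0$ (so $\varphi$ is a pure paraboloid) or $x_0 \notin V$; in both cases the set of critical points of $\varphi$ inside $V$ is at most a single point or the sphere $\{|x-x_0| = b/c\}$, and $D^2\varphi$ has a clean explicit form there.

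Next, introduce the perturbed cone
\[
\tilde\varphi(x) := a + \tfrac{\eta}{2}R^2 + b|x-x_0| - \tfrac{c+\eta}{2}|x-x_0|^2, \qquad R := \sup_{x\in\bar V}|x-x_0|,
\]
for small $\eta > 0$. Since $\tilde\varphi - \varphi = \tfrac{\eta}{2}(R^2 - |x-x_0|^2) \geq 0$ on $\bar V$, we have $u \leq \varphi \leq \tilde\varphi$ on $\partial V$; and repeating the computation above yields $-\Delta_\infty\tilde\varphi = c + \eta$ wherever $D\tilde\varphi \neq 0$, with the largest eigenvalue of $-D^2\tilde\varphi$ at critical points at least $c + \eta$. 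Note $\tilde\varphi \in C^2(V)$ by the same structural reason as $\varphi$. It suffices to prove $u \leq \tilde\varphi$ in $V$ for each small $\eta > 0$, since then $\eta \to 0$ gives \EQ{cca-ineq}.

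Suppose for contradiction that $M := \max_{\bar V}(u-\tilde\varphi) > 0$. By upper semicontinuity and $u - \tilde\varphi \leq 0$ on $\partial V$, this max is attained at some $\bar x \in V$. For small $\rho > 0$, the $C^2$ function $\psi(x) := \tilde\varphi(x) + \rho|x-\bar x|^2$ satisfies: $u - \psi$ has a strict local maximum at $\bar x$; $D\psi(\bar x) = D\tilde\varphi(\bar x)$; and $D^2\psi(\bar x) = D^2\tilde\varphi(\bar x) + 2\rho I$. By the viscosity subsolution property (in its equivalent $C^2$ formulation) we would get $-\Delta_\infty^+\psi(\bar x) \leq c$. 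On the other hand: (i) if $D\tilde\varphi(\bar x) \neq 0$, then $-\Delta_\infty^+\psi(\bar x)$ equals exactly $c + \eta - 2\rho$; (ii) if $D\tilde\varphi(\bar x) = 0$, then $-\Delta_\infty^+\psi(\bar x)$ is the largest eigenvalue of $-D^2\tilde\varphi(\bar x) + 2\rho I$, which a short case split on the sign of $c+\eta$ shows is strictly greater than $c$ for $\rho < \eta/2$. Either case contradicts $-\Delta_\infty^+\psi(\bar x) \leq c$, finishing the proof.

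The main technical hurdle is case (ii): the infinity Laplacian is singular precisely where the gradient vanishes. Fortunately, at a critical point $D^2\tilde\varphi$ is either rank one (on the sphere where $b-(c+\eta)r = 0$, with nontrivial eigenvalue $-(c+\eta)$ along $\nu$) or a scalar multiple of $I$ (namely $-(c+\eta)I$, at $x_0$ in the $b = 0$ case), so the eigenvalues of $-D^2\tilde\varphi(\bar x) + 2\rho I$ can be read off directly and the inequality $-\Delta_\infty^+\psi(\bar x) > c$ follows for $\rho < \eta/2$.
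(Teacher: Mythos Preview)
Your strategy --- perturb $c$ to $c+\eta$, take a contradiction at an interior maximum, and test the viscosity inequality --- is exactly the paper's. Case~(i) is fine, and so is case~(ii) when $b=0$. The gap is case~(ii) with $b\neq 0$ and $c+\eta>0$, i.e., when $\bar x$ lies on the critical sphere $\{|x-x_0|=b/(c+\eta)\}$.

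At such a point your own formula gives $D^2\tilde\varphi(\bar x)=-(c+\eta)\,\nu\otimes\nu$, so $D^2\psi(\bar x)=-(c+\eta)\,\nu\otimes\nu+2\rho I$, whose \emph{largest} eigenvalue is $2\rho$ (attained on the $(n-1)$-dimensional tangent space to the sphere). Hence
\[
-\Delta_\infty^+\psi(\bar x)\;=\;-\max_{|v|=1}\langle D^2\psi(\bar x)\,v,v\rangle\;=\;-2\rho,
\]
which is $\leq c$ whenever $c\geq 0$: no contradiction. Your write-up has a sign slip (it should be $-D^2\tilde\varphi-2\rho I$, and one needs its \emph{smallest} eigenvalue), but the failure is structural, not arithmetical: the tangential eigenvalue of $D^2\tilde\varphi$ is exactly $0$ on the critical sphere, and adding $\rho|x-\bar x|^2$ pushes it to $+2\rho$, the wrong direction; subtracting it would destroy the local maximum property.

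This is precisely the obstruction the paper's proof confronts in its Case~3, and it is the reason for the $\psi_{\alpha,\beta}$ construction there: one replaces the perturbed cone outside a slightly larger sphere $\{|x-x_0|=r+\beta\}$ by its reflection through that sphere, obtaining a test function whose gradient vanishes \emph{only} on $\{|x-x_0|=r+\beta\}$; a monotonicity-in-$\beta$ argument then lets one arrange the maximum at a noncritical point. Some device that breaks the degeneracy on the critical sphere is genuinely needed in dimension $n\geq 2$; the bare quadratic penalization does not suffice.
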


\begin{proof}
Without loss of generality, we assume that $a = 0$ and $x_0 = 0$. We argue by contradiction, and assume that \EQ{cca-3} holds but \EQ{cca-ineq} fails. We may select $\tilde c$ slightly greater than $c$ such that if we set $\tilde\varphi (x) :=  b |x| - \frac{1}{2} \tilde c |x|^2$, then the function $u - \tilde \varphi$ attains its maximum at a point $x_1\in V$ such that
\begin{equation*}
(u - \tilde \varphi) (x_1) > \sup_{\partial V} (u-\tilde \varphi).
\end{equation*}
Notice that the gradient of $\tilde \varphi$ a point $x \neq 0$ is given by
\begin{equation*}
D\tilde\varphi(x) = \left( b|x|^{-1} - \tilde c\right) x,
\end{equation*}
and for $x\neq 0$, we have
\begin{equation*}
D^2 \tilde \varphi(x) = \left( b|x|^{-1} - \tilde c\right) \iden - b |x|^{-3} x \otimes x.
\end{equation*}
Thus $D^2\tilde \varphi(x)$ has eigenvalues $-\tilde c$ with multiplicity 1 and associated eigenvector $x$, and $b|x|^{-1} - \tilde c$ with multiplicity $n-1$. We break our argument into multiple cases and deduce a contradiction in each of them.

\emph{Case 1:} $b=0$. In this case, every eigenvalue of $D^2\tilde\varphi(x_1)$ is $-\tilde c$, and hence
\begin{equation*}
-\Delta^+_\infty \tilde\varphi(x_1) = \tilde c > c, 
\end{equation*}
a contradiction.

\emph{Case 2:} $b\neq 0$ and $\tilde c \leq 0$. Since $\varphi$ is $C^2$ in $V$, we must have $0\not \in V$. In particular, $x_1 \neq 0$. The vector $D\tilde \varphi(x_1)$ lies in the direction of $x_1$, provided it is nonzero. If $D\varphi(x_1) = 0$, then the eigenvalues of $D^2\tilde \varphi(x_1)$ are $-\tilde c$ and $0 \leq -\tilde c$. In both cases we have
\begin{equation*}
-\Delta^+_\infty \tilde \varphi(x_1) = \tilde c > c,
\end{equation*}
a contradiction.

\emph{Case 3:} $b\neq 0$ and $\tilde c > 0$. As in Case 2, we have $0 \not \in V$, and in the event that $D \tilde \varphi(x_1) \neq 0$, the argument proceeds exactly as in Case 2. Otherwise, $D\tilde\varphi(x_1) = 0$, and thus $b > 0$ and $|x_1| = b / \tilde c = : r$. For small $\alpha, \beta > 0$ such that $\tilde c \beta < \alpha$, define
\begin{equation*}
\psi_{\alpha, \beta}(x) :=
\left\{ \begin{array}{ll}
(b + \alpha)|x| - \frac{1}{2}\tilde c|x|^2 & \mbox{if}\  |x| \leq r + \beta \\
(b + \alpha)(2(r + \beta) - |x|) - \frac{1}{2}\tilde c(2(r+\beta) - |x|)^2 & \mbox{if} \ |x| > r + \beta.
\end{array} \right.
\end{equation*}
Notice that $\psi_{\alpha, \beta}$ is smooth on the set $\{ |x| \neq r + \beta \}$. For $x \in \Omega$ and $|x|\neq r + \beta$, the gradient $D\psi_{\alpha,\beta}(x)$ is nonzero, lies in the direction of $x$, and is an eigenvector of the Hessian $D^2\psi_{\alpha,\beta}(x)$ with eigenvalue $-\tilde c$. Also observe, for $\beta_1 < \beta_2$, that $\psi_{\alpha, \beta_1} \equiv \psi_{\alpha, \beta_2}$ on $\{ |x | \leq r + \beta_1 \}$ and $\psi_{\alpha, \beta_1} < \psi_{\alpha, \beta_2}$ on $\{ |x | > r + \beta_1 \}$. 

By selecting $\alpha$ small enough, we may assume that $u - \psi_{\alpha, \beta}$ has a maximum at some $x_2 \in \Omega$. If $x_2$ can be selected such that $|x_2| \neq r + \beta$, then the preceding discussion yields the contradiction
\begin{equation*}
-\Delta_\infty^+ \psi_{\alpha,\beta}(x_2) = \tilde{c} > c.
\end{equation*}
If every point $x$ of local maximum of $u - \varphi_{\alpha,\beta}$ in $V$ satisfies $|x| = r + \beta$, then we may make $\beta$ slightly larger. Now the difference $u - \psi_{\alpha, \beta}$ must have a maximum in the set $\left\{ |x| < r + \beta \right\}$, completing the proof.
\end{proof}

%%%%%%%%%
%%%%%%%%%
%%%%%%%%%

Following Crandall \cite{Crandall:2008}, we introduce the \emph{increasing slope estimates} \EQ{slope-below} and \EQ{slope-above} for subsolutions of the infinity Laplace equation. Our proof follows the argument given in \cite{Crandall:2008} for the case $f \equiv 0$. Before stating the estimates, we introduce some notation. Let us define the \emph{local Lipschitz constant} $L(u,x)$ of a function $u: \Omega \to \R$ at a point $x\in \Omega$ by
\begin{equation*}
L(u,x) := \lim_{r \downarrow 0} \Lip\left( u , B(x,r) \right) = \inf \left\{ \Lip\left( u, B(x,r) \right) : 0 < r < \dist(x,\partial \Omega) \right\}.
\end{equation*}
It is easy to check (see \cite[Lemma 4.3]{Crandall:2008}) that $x \mapsto L(u,x)$ is upper semicontinuous,
\begin{equation}
\label{eq:local-Lipschitz}
\Lip(u, \Omega) \leq \sup_{y \in \Omega} L(u,y),
\end{equation}
and if the right side of \EQ{local-Lipschitz} is finite, then
\begin{equation}
\label{eq:local-Lipschitz-Du}
\| Du \|_{L^\infty(\Omega)} = \sup_{y\in \Omega } L(u,y).
\end{equation}

\begin{lem}
\label{lem:slope-estimates}
Suppose that $u \in USC(\bar \Omega)$ is a viscosity subsolution of
\begin{equation*}
- \Delta_\infty u \leq h \mbox{ in } \Omega,
\end{equation*}
for some $h \in USC(\Omega)$. If $r > 0$ and $\dist(x, \partial \Omega) \geq r$, then
\begin{equation}
\label{eq:slope-below}
L(u,x) \leq S^+_r u(x) + \frac{r}{2} \max_{\bar B(x,r)} h.
\end{equation}
If, in addition, $y \in \bar B(x,r)$ is such that $u(y) = \sup_{\bar B(x,r)} u$, then
\begin{equation}\label{eq:slope-above}
L(u,y) \geq S^+_r u(x) - \frac{r}{2} \max_{\bar B(x,r)} h.
\end{equation}
\end{lem}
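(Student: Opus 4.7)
The plan is to establish both estimates by comparing $u$ with a single quadratic cone centered at $x$, exploiting the comparison principle of \LEM{cca}. Set $c := \max_{\bar B(x,r)} h$ and define
\begin{equation*}
\varphi(z) := u(x) + \alpha |z-x| - \tfrac{c}{2}|z-x|^2, \qquad \alpha := S^+_r u(x) + \tfrac{rc}{2}.
\end{equation*}
Since $\rho_r(x,z) = r$ for every $z\in\bar B(x,r)$, the definition of $S^+_r u(x)$ gives $u(z)\leq u(x)+rS^+_r u(x)=\varphi(z)$ for every $z\in\partial B(x,r)$. I would apply \LEM{cca} on the punctured ball $V = B(x,r)\setminus\{x\}$, on which $\varphi\in C^2$; the boundary of $V$ splits as $\partial B(x,r)\cup\{x\}$, and the inequality $u\leq\varphi$ holds on the first piece by the preceding remark and trivially on $\{x\}$ since $u(x)=\varphi(x)$. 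This yields $u\leq\varphi$ throughout $\bar B(x,r)$.

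From this cone bound, the proof of \EQ{slope-below} proceeds by noting that for each $0<s\leq r$ sufficiently small that $\alpha-cs\geq 0$, the function $\varphi$ is radially increasing on $\bar B(x,s)$, so
\begin{equation*}
S^+_s u(x) = \frac{\max_{\bar B(x,s)} u - u(x)}{s}\leq \frac{\varphi(z_s)-u(x)}{s}=\alpha-\tfrac{cs}{2}.
\end{equation*}
Equivalently, $s\mapsto S^+_s u(x) + \tfrac{cs}{2}$ is dominated by $\alpha$ on $(0,r]$; this is the inhomogeneous analogue of Crandall's increasing slope estimate. Sending $s\downarrow 0$ and invoking the standard identification $L(u,x)=\lim_{s\downarrow 0}S^+_s u(x)$ for subsolutions (which in turn follows from cone comparison applied at nearby points), I obtain $L(u,x)\leq \alpha = S^+_r u(x)+\tfrac{rc}{2}$.

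For \EQ{slope-above}, let $y$ realize the maximum of $u$ on $\bar B(x,r)$ and set $\rho:=|y-x|$. I would split into two cases. If $\rho<r$ and $c<0$, then the viscosity subsolution property applied to a quadratic test function rules out interior local maxima (since it would require $h(y)\geq 0$), contradicting $h\leq c<0$; this forces $\rho=r$ whenever $c<0$. If instead $\rho<r$ and $c\geq 0$, substituting $u(y)=u(x)+rS^+_r u(x)$ into the cone bound $u(y)\leq\varphi(y)=u(x)+\alpha\rho-\tfrac{c}{2}\rho^2$ and simplifying gives $(r-\rho)S^+_ru(x)\leq \tfrac{c\rho(r-\rho)}{2}$, i.e.\ $S^+_r u(x)-\tfrac{rc}{2}\leq \tfrac{c(\rho-r)}{2}\leq 0\leq L(u,y)$, so \EQ{slope-above} holds trivially. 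It remains to handle $\rho=r$, where the cone bound is saturated: $u(y)=\varphi(y)$. Parametrizing the segment from $x$ to $y$ by $z_t:=x+t(y-x)/r$ for $t\in(0,r)$, the inequality $u(z_t)\leq\varphi(z_t)$ yields
\begin{equation*}
\frac{u(y)-u(z_t)}{|z_t-y|}\geq \frac{\varphi(y)-\varphi(z_t)}{r-t}=\alpha-\tfrac{c(r+t)}{2},
\end{equation*}
which tends to $\alpha-cr=S^+_ru(x)-\tfrac{rc}{2}$ as $t\uparrow r$, giving $L(u,y)\geq S^+_ru(x)-\tfrac{rc}{2}$.

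The main obstacle I anticipate is the technical matter that $\varphi$ fails to be $C^2$ at its apex, which forces me to apply \LEM{cca} on a punctured (or annular) region rather than on $B(x,r)$ itself, and to verify carefully that no information is lost at the apex. A secondary subtlety lies in the passage from the uniform slope bound $S^+_su(x)\leq\alpha-\tfrac{cs}{2}$ to a control on the local Lipschitz constant $L(u,x)$, which requires the standard but nontrivial fact that for subsolutions of the infinity Laplace equation the quantity $\lim_{s\downarrow 0}S^+_s u(x)$ coincides with $L(u,x)$; this is the point at which the proof relies most directly on Crandall's framework in \cite{Crandall:2008}.
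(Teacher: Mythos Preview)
Your argument for \EQ{slope-above} is essentially the paper's: the same cone $\varphi$, the same dichotomy (interior maximum forces $c\geq 0$ and makes the right-hand side nonpositive; boundary maximum gives $u(y)=\varphi(y)$, and the touching from below yields $L(u,y)\geq |D\varphi(y)|$, which one computes to be at least $S^+_r u(x)-\tfrac{rc}{2}$). The paper expresses the last step via $D\varphi(y)\cdot\frac{y-x}{|y-x|}$ rather than your radial parametrization, but these are the same computation.

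For \EQ{slope-below}, however, there is a genuine gap, and it is exactly the one you flag at the end. You bound $S^+_s u(x)$ from above and then invoke the identification $L(u,x)=\lim_{s\downarrow 0}S^+_s u(x)$ for subsolutions. That identification is not available as an external input here: in the generality of the lemma ($h\in\USC(\Omega)$, not $h\equiv 0$), its proof \emph{is} the second cone comparison at nearby points that you mention parenthetically but do not carry out. Citing Crandall's homogeneous result does not suffice, and unfolding what ``follows from cone comparison applied at nearby points'' means reproducing the missing step. The paper does this directly: after obtaining $u\leq\varphi$ on $\bar B(x,r)$, it applies \LEM{cca} a second time with apex at an arbitrary $z\in B(x,r)$ (on the region $B(x,r)\setminus\{z\}$), obtaining
\[
u(w)-u(z)\ \leq\ \left(\frac{u(y)-u(z)}{r-|x-z|}+\frac{c}{2}(r+|x-z|)\right)|w-z|-\frac{c}{2}|w-z|^2
\]
for all $w\in\bar B(x,r)$. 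Dividing by $|w-z|$ and sending $w,z\to x$ (using that the first cone comparison already forces $u(z)\to u(x)$) gives $L(u,x)\leq S^+_r u(x)+\tfrac{rc}{2}$ with no appeal to an external identification. This second cone, centered at the moving point $z$, is the missing idea in your sketch; once you insert it, your proof and the paper's coincide.
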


\begin{proof}
Let $r$, $x$, and $y$ be as above, and define
\begin{equation*}
c := \max_{\bar B(x,r)} h.
\end{equation*}
The following inequality is easily seen to be valid for $z \in \partial \left( B(x,r) \setminus \{ x \} \right)$, and therefore, by \LEM{cca}, for all $z \in \bar B(x,r)$:
\begin{equation}
\label{eq:lip-bound-eq1}
u(z) - u(x) \leq \left( \frac{u(y) - u(x)}{r} + \frac{c}{2} r \right) |z - x| - \frac{c}{2} |z - x|^2.
\end{equation}
Fixing $z \in B(x,r)$ and applying \LEM{cca} again, we obtain the inequality
\begin{equation}
\label{eq:lip-bound-eq2}
u(w) - u(z) \leq \left( \frac{u(y) - u(z)}{r - |x - z|} + \frac{c}{2}(r + |x - z|) \right) |w - z| - \frac{c}{2}|w - z|^2
\end{equation}
for every $w \in \bar B(x,r)$, since it holds for every $w \in \partial \left( B(x,r) \setminus \{ z \} \right)$.

The inequality \EQ{lip-bound-eq1} implies $\limsup_{z \to x} u(z) \leq u(x)$. Substituting $w = x$ in \EQ{lip-bound-eq2} and sending $z \to x$, we obtain $\liminf_{z \to x} u(z) \geq u(x)$. If we divide \EQ{lip-bound-eq2} by $|w - z|$, we get
\begin{equation*}
\frac{u(w) - u(z)}{|w - z|} \leq \frac{u(y) - u(z)}{r - |x - z|} + \frac{c}{2}(r + |x - z|) - \frac{c}{2}|w - z|.
\end{equation*}
Sending $w, z \to x$ and using $u(z) \to u(x)$ yields \EQ{slope-below}.

Now we prove \EQ{slope-above}. If $c < 0$, then by \LEM{cca}, $u \leq \psi$ in $B(x,r)$ for
\begin{equation*}
\psi(w) := \max_{\bar B(x,r)} u - \frac{c}{2} r^2 + \frac{c}{2} |w - x|^2.
\end{equation*}
Thus we must have $|y - x| = r$ in the case that $c < 0$. In particular, if $y=x$, then we immediately conclude that $c\geq 0$ and $S^+_r u(x) = 0$, and thus \EQ{slope-above} is trivial. We may therefore assume that $y\neq x$. Define the quadratic cone
\begin{equation*}
\varphi(z) : = u(x) + \left( \frac{u(y) - u(x)}{r} + \frac{c}{2} r \right) |z - x| - \frac{c}{2} |z - x|^2.
\end{equation*}
According to \LEM{cca}, $u \leq \varphi$ in $\bar B(x,r)$. Regardless of the sign of $c$,
\begin{align*}
D \varphi(y) \cdot \frac{y - x}{|y-x|} & = \left( \frac{u(y) - u(x)}{r} + \frac{r}{2} c \right) - c|y-x| \\
& \geq \frac{u(y) - u(x)}{r} - \frac{r}{2} c \\
& = S^+_r u(x) - \frac{r}{2}c.
\end{align*}
We may assume the quantity on the right-hand side is positive, as otherwise \EQ{slope-above} is trivial. Since $u$ touches $\varphi$ from below at $y$ and $\varphi$ is increasing as we move towards $\partial B(x, |y-x|)$, we have $L(u,y) \geq | D \varphi(y) |$. Thus \EQ{slope-above} holds.
\end{proof}

%%%%%%%%%
%%%%%%%%%
%%%%%%%%%

The following proposition is essential to our theory for the continuum equation. It shows that subsolutions of the continuum equation are essentially finite difference subsolutions, possibly with a small error. This proposition bears a resemblance to \LEM{max-over-balls}, but as we see below, it is much stronger.

\begin{prop} \label{prop:continuous-sup-convolution}
Assume that $u \in USC(\Omega)$ is a viscosity subsolution of 
\begin{equation*}
- \Delta_\infty u \leq h \mbox{ in } \Omega,
\end{equation*}
with $h \in USC(\Omega)$. For $\ep > 0$ and $x \in \Omega_{2\ep}$, define
\begin{equation*}
u^\ep(x) := \max_{\bar B(x,\ep)} u \quad \mbox{and} \quad h^{2\ep}(x) := \max_{\bar B(x,2 \ep)} h.
\end{equation*}
Then 
\begin{equation*}
-\Delta^\ep_\infty u^\ep \leq h^{2\ep} \quad \mbox{in} \  \Omega_{2 \ep}.
\end{equation*}
\end{prop}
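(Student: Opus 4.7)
The plan is to unfold the definitions, reduce the claim to a one-sided pointwise estimate on $u$, and then extract that estimate from the increasing slope estimates \LEM{slope-estimates}. Set $M_r := \max_{\bar B(x,r)} u$ and $m(y) := u^\ep(y) = \max_{\bar B(y,\ep)} u$. Since $x \in \Omega_{2\ep}$, one has $\bar\Omega(x,\ep) = \bar B(x,\ep)$ and $\rho_\ep(x,y) = \ep$ for every $y \in \bar B(x,\ep)$, so unpacking the definitions gives
\begin{equation*}
\ep^2 \bigl(-\Delta_\infty^\ep u^\ep(x)\bigr) = 2 M_\ep - M_{2\ep} - \min_{y \in \bar B(x,\ep)} m(y).
\end{equation*}
The claim therefore reduces to showing
\begin{equation*}
m(y) \geq 2 M_\ep - M_{2\ep} - \ep^2 h^{2\ep}(x) \quad \text{for every } y \in \bar B(x,\ep).
\end{equation*}

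The first move is the trivial but essential symmetry observation: $y \in \bar B(x,\ep)$ forces $x \in \bar B(y,\ep)$, so $m(y) = \max_{\bar B(y,\ep)} u \geq u(x)$. This eliminates $m(y)$ from the target, reducing it to a statement about $u$ alone, namely
\begin{equation*}
M_{2\ep} - M_\ep \geq M_\ep - u(x) - \ep^2 h^{2\ep}(x).
\end{equation*}
Informally, the sup of $u$ grows by at least as much between radii $\ep$ and $2\ep$ as it did between $0$ and $\ep$, up to an $O(\ep^2 h)$ error.

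To prove this I would pick $w^* \in \bar B(x,\ep)$ achieving $u(w^*) = M_\ep$ (USC plus compactness) and invoke \LEM{slope-estimates} twice. Since $x \in \Omega_{2\ep}$, both $x$ and $w^*$ sit at distance at least $\ep$ from $\partial \Omega$, so the lemma applies at both with radius $\ep$. Its second part at center $x$ with extremal point $w^*$ gives $L(u, w^*) \geq S^+_\ep u(x) - \tfrac{\ep}{2} \max_{\bar B(x,\ep)} h$, while its first part at center $w^*$ gives $L(u, w^*) \leq S^+_\ep u(w^*) + \tfrac{\ep}{2} \max_{\bar B(w^*,\ep)} h$. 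Chaining these, using $S^+_\ep u(x) = (M_\ep - u(x))/\ep$, and absorbing both $h$-maxes into $h^{2\ep}(x)$ (since $\bar B(x,\ep) \cup \bar B(w^*,\ep) \subseteq \bar B(x, 2\ep)$) yields $\ep S^+_\ep u(w^*) \geq M_\ep - u(x) - \ep^2 h^{2\ep}(x)$. Since $\ep S^+_\ep u(w^*) = \max_{\bar B(w^*,\ep)} u - M_\ep \leq M_{2\ep} - M_\ep$, the required inequality follows.

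The main obstacle I expect is conceptual rather than computational: recognizing that \LEM{slope-estimates} is the right tool, and spotting the inclusion $x \in \bar B(y,\ep)$ that lets one avoid controlling $m(y)$ directly. Once those two ingredients are in hand, the bookkeeping — verifying that $w^* \in \Omega_\ep$ so both applications of \LEM{slope-estimates} are legitimate, and that the two distinct $h$-maxes that the lemma produces are dominated by a single $h^{2\ep}(x)$ — is automatic from the hypothesis $x \in \Omega_{2\ep}$, which is precisely tuned for this.
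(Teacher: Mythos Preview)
Your proof is correct and is essentially the paper's own argument, repackaged: your $w^*$ is the paper's $y_0$, your key observation $m(y) \geq u(x)$ is exactly the paper's estimate $\ep S^-_\ep u^\ep(x_0) \leq u(y_0) - u(x_0)$, your bound $\ep S^+_\ep u(w^*) \leq M_{2\ep} - M_\ep$ is the paper's $\ep S^+_\ep u^\ep(x_0) \geq \ep S^+_\ep u(y_0)$, and both proofs then chain the two parts of \LEM{slope-estimates} through $L(u,w^*)$ in the same way. The only difference is cosmetic: the paper works directly with $S^\pm_\ep$ while you unpack everything into $M_\ep$, $M_{2\ep}$, and $u(x)$.
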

\begin{proof}
Fix some $x_0 \in \Omega_{2 \ep}$. Choose $y_0 \in \bar B(x_0,\ep)$ and $z_0 \in \bar B(y_0, \ep)$ such that
\begin{equation*}
u(y_0) = u^\ep(x_0) \quad \mbox{and} \quad u(z_0) = u^\ep(y_0).
\end{equation*}
Notice that
\begin{equation*}
\ep S^-_\ep u^\ep(x_0) = u^\ep(x_0)  -  \min_{y\in \bar B(x_0,\ep)} u^\ep(y) \leq u(y_0) - u(x_0) = \ep S^+_\ep u(x_0),
\end{equation*}
and similarly
\begin{equation*}
\ep S^+_\ep u^\ep(x_0) = \max_{\bar B(x_0, \ep)} u^\ep - u^\ep(x_0) \geq u(z_0) - u(y_0) = \ep S^+_\ep u(y_0).
\end{equation*}
Combining these inequalities, we see that
\begin{equation}
\label{eq:ineq1}
- \Delta_\infty^\ep u^\ep(x_0) = \frac{1}{\ep} \left( S^-_\ep u^\ep(x_0) - S^+_\ep u^\ep(x_0) \right) \leq \frac{1}{\ep} \left( S^+_\ep u(x_0) - S^+_\ep u(y_0) \right).
\end{equation}
The increasing slope estimates \EQ{slope-below} and \EQ{slope-above} imply that
\begin{equation}
\label{eq:ineq2}
S^+_\ep u(x_0) \leq L(u, y_0) + \frac{\ep}{2} \max_{\bar B(x_0, \ep)} h \leq L(u, y_0) + \frac{\ep}{2} h^{2\ep}(x_0),
\end{equation}
and
\begin{equation}
\label{eq:ineq3}
S^+_\ep u(y_0) \geq L(u, y_0) - \frac{\ep}{2} \max_{\bar B(y_0, \ep)} h \geq L(u, y_0) - \frac{\ep}{2} h^{2\ep}(x_0).
\end{equation}
Combining \EQ{ineq1}, \EQ{ineq2}, and \EQ{ineq3} we deduce that  $- \Delta_\infty^\ep u^\ep(x_0) \leq h^{2\ep}(x_0)$.
\end{proof}

%%%%%%%%%
%%%%%%%%%
%%%%%%%%%

As a first application of \PROP{continuous-sup-convolution}, we give a second proof of \THM{convergence} which is more efficient than the proof in the previous section. We follow the outline of the first proof, but manage to avoid the diadic sequences and the appeal to \LEM{discrete-to-continuous}. The idea is to perturb the test function rather than the sequence $\{ u_j \}$, in the spirit of Evans \cite{Evans:1989}, and then invoke \PROP{continuous-sup-convolution}.

\begin{proof}[{\bf Second proof of \THM{convergence}}] Suppose $\ep_j \downarrow 0$, $u_j \rightarrow u$ locally uniformly in $\Omega$, and each $u_j \in USC(\Omega)$ satisfies
\begin{equation*}
- \Delta_\infty^{\ep_j} u_j \leq f \mbox{ in } \Omega_{\ep_j}.
\end{equation*}
Suppose $\varphi \in C^\infty(\Omega)$ is such that $u - \varphi$ has a strict local maximum at a point $x_0 \in \Omega$. We must show that
\begin{equation}\label{eq:second-proof-convergence-wts}
- \Delta_\infty^+ \varphi(x_0) \leq f(x_0).
\end{equation}
Define
\begin{equation*}
\varphi_j(x) := \min_{\bar B(x,\ep_j)} \varphi.
\end{equation*}
Since $\varphi$ is viscosity supersolution of
\begin{equation*}
- \Delta_\infty \varphi \geq - \Delta_\infty^+ \varphi \mbox{ in } \Omega,
\end{equation*}
\PROP{continuous-sup-convolution} yields
\begin{equation}
\label{eq:new-converge-eq1}
- \Delta_\infty^{\ep_j} \varphi_j(x) \geq \min_{\bar B(x,2\ep_j)} \left( - \Delta_\infty^+ \varphi \right) \mbox{ for every } x \in \Omega_{2 \ep_j}.
\end{equation}
We may assume there exist $x_j \in \Omega_{2\ep_j}$ such that $x_j \rightarrow x_0$ and
\begin{equation*}
(u_j - \varphi_j)(x_j) = \max_{\bar \Omega_{\ep_j}} (u_j - \varphi_j).
\end{equation*}
We obtain
\begin{equation}
\label{eq:new-converge-eq2}
- \Delta_\infty^{\ep_j} \varphi_j(x_j) \leq - \Delta_\infty^{\ep_j} u_j(x_j) \leq f(x_j).
\end{equation}
Combining \EQ{new-converge-eq1} and \EQ{new-converge-eq2}, we obtain
\begin{equation*}
- \max_{\bar B(x_j,2\ep_j)} \Delta_\infty^+ \varphi \leq f(x_j).
\end{equation*}
Passing to the limit $j \to \infty$ using the upper semicontinuity of $x \mapsto \Delta_\infty^+ \varphi(x)$, we obtain \EQ{second-proof-convergence-wts}.
\end{proof}

%%%%%%%%%
%%%%%%%%%
%%%%%%%%%

As a second application of \PROP{continuous-sup-convolution}, we give a simple proof of Jensen's theorem on the uniqueness of infinity harmonic functions.

\begin{thm}[Jensen \cite{Jensen:1993}]\label{thm:Jensen}
Suppose that $u, -v\in USC(\bar \Omega)$ satisfy
\begin{equation}\label{eq:jensen-eq}
-\Delta_\infty u \leq 0 \leq -\Delta_\infty v \quad \mbox{in} \ \Omega.
\end{equation}
Then 
\begin{equation*}
\max_{\bar \Omega} (u-v) = \max_{\partial \Omega} (u-v).
\end{equation*}
\end{thm}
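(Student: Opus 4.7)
My plan is to transfer Jensen's theorem from the continuum equation to the finite difference equation via \PROP{continuous-sup-convolution}, and then invoke the finite difference comparison machinery already developed in \SEC{three}. For small $\ep > 0$, define the sup- and inf-convolutions
\begin{equation*}
u^\ep(x) := \max_{y \in \bar B(x,\ep) \cap \bar \Omega} u(y), \qquad v_\ep(x) := \min_{y \in \bar B(x,\ep) \cap \bar \Omega} v(y).
\end{equation*}
Then $u^\ep \in \USC(\bar \Omega)$, $v_\ep \in \LSC(\bar \Omega)$, $u^\ep \geq u$, and $v_\ep \leq v$. Applying \PROP{continuous-sup-convolution} to $u$ and to $-v$, both with $h \equiv 0$, yields
\begin{equation*}
-\Delta_\infty^\ep u^\ep \leq 0 \leq -\Delta_\infty^\ep v_\ep \quad \mbox{in } \Omega_{2\ep}.
\end{equation*}

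Next, I would feed this into the finite difference comparison principle. By \LEM{no-extrema-no-extrema}, $u^\ep$ has no strict $\ep$-local maximum (and $v_\ep$ no strict $\ep$-local minimum) witnessed by a closed set lying in $\Omega_{2\ep}$, so the argument of \THM{no-extrema-comparison}, or a variant of \PROP{no-extrema-comparison-ep-thick}, produces
\begin{equation*}
\sup_{\bar \Omega}(u^\ep - v_\ep) \; = \; \sup_{\bar \Omega \setminus \Omega_{2\ep}}(u^\ep - v_\ep).
\end{equation*}

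The last step is to send $\ep \to 0$. For any fixed $x \in \Omega$, the bounds $u^\ep(x) \geq u(x)$ and $v_\ep(x) \leq v(x)$ yield $\sup_{\bar\Omega}(u^\ep - v_\ep) \geq u(x) - v(x)$ for all sufficiently small $\ep$, so
\begin{equation*}
\liminf_{\ep \to 0} \sup_{\bar \Omega}(u^\ep - v_\ep) \; \geq \; \max_{\bar \Omega}(u - v).
\end{equation*}
For the right side, pick near-maximizers $x_\ep \in \bar \Omega \setminus \Omega_{2\ep}$ together with $y_\ep, z_\ep \in \bar B(x_\ep,\ep) \cap \bar \Omega$ realizing $u^\ep(x_\ep) = u(y_\ep)$ and $v_\ep(x_\ep) = v(z_\ep)$. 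Since $\dist(x_\ep, \partial \Omega) \leq 2\ep$, along a subsequence all three of $x_\ep, y_\ep, z_\ep$ converge to a common boundary point $x_0 \in \partial \Omega$. Upper-semicontinuity of $u$ and lower-semicontinuity of $v$ then give
\begin{equation*}
\limsup_{\ep \to 0} (u^\ep(x_\ep) - v_\ep(x_\ep)) \; \leq \; u(x_0) - v(x_0) \; \leq \; \max_{\partial \Omega}(u - v).
\end{equation*}
Chaining these inequalities produces $\max_{\bar \Omega}(u-v) \leq \max_{\partial \Omega}(u-v)$; the reverse is trivial.

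The main technical obstacle is a mild mismatch in scales: \PROP{continuous-sup-convolution} supplies the discrete inequality only on $\Omega_{2\ep}$, whereas \PROP{no-extrema-comparison-ep-thick} as stated asks for it on $\Omega_\ep$. This can be reconciled either by running the sup-convolution at scale $\ep/2$ and invoking the comparison at scale $\ep/2$ on the shrunken sub-domain $\Omega_\ep$, or by directly re-deriving the comparison step in the interior, exactly as in the proof of \THM{no-extrema-comparison}. Either route amounts to routine bookkeeping, so the substantive content of the argument is the passage to the finite difference world via \PROP{continuous-sup-convolution}, which reduces Jensen's theorem to the elementary comparison principle of \SEC{three}.
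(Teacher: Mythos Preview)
Your proposal is correct and follows essentially the same approach as the paper: apply \PROP{continuous-sup-convolution} to obtain finite difference inequalities for $u^\ep$ and $v_\ep$ on $\Omega_{2\ep}$, invoke the discrete comparison principle, and pass to the limit using upper semicontinuity. The paper handles the scale mismatch you flagged simply by applying \PROP{no-extrema-comparison-ep-thick} on the subdomain $\Omega_\ep$ (where $u^\ep$ and $v_\ep$ are naturally defined), yielding $\sup_{\Omega_\ep}(u^\ep - v_\ep) \leq \sup_{\Omega_\ep \setminus \Omega_{2\ep}}(u^\ep - v_\ep)$, and then sending $\ep \to 0$.
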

\begin{proof}
For $\ep > 0$, let $u^\ep (x):= \max_{\bar B(x,\ep)} u$ and $v_\ep (x):= \min_{\bar B(x,\ep)} v$. By \PROP{continuous-sup-convolution}, we have
\begin{equation*}
- \Delta_\infty^\ep u^\ep \leq 0 \leq - \Delta_\infty^\ep v_\ep \quad \mbox{in} \quad \Omega_{2\ep}.
\end{equation*}
By \PROP{no-extrema-comparison-ep-thick}, we have
\begin{equation*}
\sup_{\Omega_\ep} (u^\ep - v_\ep) \leq \sup_{\Omega_\ep \setminus \Omega_{2 \ep}} (u^\ep - v_\ep).
\end{equation*}
Passing to the limit $\ep \to 0$ and using the upper semicontinuity of $u$ and $-v$ yields the theorem.
\end{proof}

\begin{remark}
The only essential ingredients in our proof Jensen's theorem are Propositions \ref{prop:no-extrema-comparison-ep-thick} and \ref{prop:continuous-sup-convolution}, the latter of which depends only on \LEM{slope-estimates} and the easy part ($c=0$) of \LEM{cca}. See also the authors' recent note \cite{Armstrong:preprint} for an efficient, self-contained proof of Jensen's uniqueness result. 
\end{remark}

%%%%%%%%%
%%%%%%%%%
%%%%%%%%%

We need the following continuity estimates for solutions of the continuum infinity Laplace equation.

\begin{lem}
\label{lem:continuum-estimates}
Assume that $u \in C(\bar \Omega)$ is a viscosity solution of
\begin{equation*}
\left\{ \begin{array}{ll}
- \Delta_\infty u = f & \mbox{ in } \Omega \\
u = g & \mbox{ on } \partial \Omega.
\end{array} \right.
\end{equation*}
Then $u$ is locally Lipschitz in $\Omega$, and there exists a modulus $\omega: [0,\infty) \to [0,\infty)$ such that $\omega(0) = 0$ and
\begin{equation}
\label{eq:continuum-modulus}
|u(x) - u(y)| \leq \omega(|x - y|) \quad \mbox{for all} \ \ x,y\in \bar\Omega. 
\end{equation}
The modulus $\omega$ depends only on $\diam(\Omega)$, the modulus $\omega_g$, and $\| f \|_{L^\infty(\Omega)}$. Moreover, there is a constant $C$ depending only on $\diam(\Omega)$ such that
\begin{equation}
\label{eq:bcont-cont}
|u(x) - g(x_0)| \leq 2 \omega_g(|x - x_0|) + C \| f \|_{L^\infty(\Omega)} |x - x_0| \quad \mbox{for all} \ \ x \in \Omega, \ x_0 \in \partial \Omega.
\end{equation}
\end{lem}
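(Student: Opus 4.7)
The estimates are the continuum analogs of \LEM{bcont}, \LEM{icont}, and \LEM{epcont} from \SEC{three}, and the plan is to execute exactly the same three-step strategy, but with the discrete cone comparison (\COR{comparison} together with \LEM{cone-super}) replaced by its continuum counterpart \LEM{cca}, and the discrete marching argument replaced by the continuum marching argument based on \LEM{slope-estimates}.

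For the boundary estimate \EQ{bcont-cont}, I would fix $x_0 \in \partial \Omega$ and $x_1 \in \Omega$, set $r := |x_1 - x_0|$ and $c := \|f\|_{L^\infty(\Omega)}$, and build the quadratic cone
\[
\varphi(x) := g(x_0) + \omega_g(r) + b|x - x_0| - \tfrac{c}{2}|x-x_0|^2, \qquad b := \frac{\omega_g(r)}{r} + \tfrac{c}{2}\diam(\Omega).
\]
The concavity property \EQ{omega-g-concave} (which gives $\omega_g(s) \le (s/r)\omega_g(r)$ for $s \ge r$) together with monotonicity of $\omega_g$ yields $\varphi \ge g$ on $\partial \Omega$ by a straightforward case split on $|y-x_0| \le r$ versus $|y-x_0| > r$. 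Since $-\Delta_\infty u \le \|f\|_{L^\infty(\Omega)}$ and $\varphi \in C^2(\Omega)$ (as $x_0 \in \partial \Omega$), \LEM{cca} gives $u \le \varphi$ in $\Omega$. Evaluating at $x_1$ yields the upper half of \EQ{bcont-cont} with $C = \tfrac{1}{2}\diam(\Omega)$; the lower half follows by applying the same argument to $-u$ (a subsolution of $-\Delta_\infty(-u) \le \|f\|_{L^\infty(\Omega)}$) with boundary data $-g$, which has the same modulus $\omega_g$.

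For the interior Lipschitz estimate, the key observation is the slope monotonicity along the march: if $\bar B(x,\ep) \subseteq \Omega$ and $y \in \bar B(x,\ep)$ realizes $u(y) = \max_{\bar B(x,\ep)} u$, then combining \EQ{slope-below} at $x$ with \EQ{slope-above} at $y$ gives
\[
L(u,y) \ge S^+_\ep u(x) - \tfrac{\ep}{2}\|f\|_{L^\infty(\Omega)} \ge L(u,x) - \ep \|f\|_{L^\infty(\Omega)}.
\]
I would run the marching argument of \LEM{icont} in the continuum: starting at $x_0 \in \Omega_r$, iteratively produce $x_{k+1} \in \bar B(x_k,\ep)$ achieving the maximum of $u$ over $\bar B(x_k,\ep)$, so that $L(u,x_k) \ge L(u,x_0) - k\ep\|f\|_{L^\infty(\Omega)}$; telescoping the differences $u(x_k)-u(x_{k-1}) \ge \ep S^+_\ep u(x_{k-1}) \ge \ep(L(u,x_{k-1}) - \tfrac{\ep}{2}\|f\|_{L^\infty(\Omega)})$ via \EQ{slope-below}, and comparing to the boundary oscillation bound from Step~1, yields $L(u,x_0) \le C(\omega_g(r)/r + \|f\|_{L^\infty(\Omega)})$ with $C$ depending only on $\diam(\Omega)$. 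By \EQ{local-Lipschitz} this gives local Lipschitzianity of $u$ in $\Omega$ and the interior estimate
\[
|u(x) - u(y)| \le C\Bigl(\frac{\omega_g(r)}{r} + \|f\|_{L^\infty(\Omega)}\Bigr)|x-y|, \qquad x,y \in \Omega_r.
\]

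Finally, exactly as in the proof of \LEM{epcont}, one combines the interior and boundary estimates: given $x,y \in \bar \Omega$ set $r := |x-y|^{1/2}$ and split into the cases $x,y \in \Omega_r$ (use the interior estimate), $x,y \in \bar \Omega \setminus \Omega_{2r}$ (triangle-inequality through boundary points via Step~1), and the mixed case (insert an intermediate point on a minimal path connecting $x$ and $y$). Assembling these yields the global modulus \EQ{continuum-modulus}, depending only on $\diam(\Omega)$, $\omega_g$, and $\|f\|_{L^\infty(\Omega)}$. The main technical obstacle is the interior marching step, which must accommodate the possibility that $Du$ vanishes (handled cleanly by the use of $L(u,\cdot)$ in place of $|Du|$) and must track the error accumulation from $f$ along the ``flow line''; the cleanest execution follows Crandall \cite[Section~6]{Crandall:2008}, adapted to include the $f$-dependent correction supplied by \LEM{slope-estimates}.
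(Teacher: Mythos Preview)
Your boundary estimate (Step~1) and the final combination (Step~3) are exactly what the paper does. The difference is in the interior estimate: you propose to run a continuum marching argument \`a la \LEM{icont}, obtaining the sharp bound $L(u,x) \leq C(\omega_g(r)/r + \|f\|_{L^\infty(\Omega)})$ for $x \in \Omega_r$. The paper instead takes a shortcut: it simply reads off from \EQ{slope-below} that
\[
L(u,x) \leq S^+_r u(x) + \tfrac{r}{2}\|f\|_{L^\infty(\Omega)} \leq \tfrac{2}{r}\|u\|_{L^\infty(\Omega)} + \tfrac{r}{2}\|f\|_{L^\infty(\Omega)},
\]
and combines this with the $L^\infty$ bound on $u$ already obtained from \EQ{bcont-cont}. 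This is weaker than what you prove---indeed, the paper explicitly remarks afterward that the continuum analogue of \LEM{icont} is \emph{not} contained in its proof and would require exactly the gradient-flowline argument you outline---but it suffices for \EQ{continuum-modulus} and avoids the marching machinery entirely. Your route is correct and yields a sharper interior Lipschitz constant; the paper's route is a one-line application of \EQ{slope-below} and is all that is needed for the stated lemma.
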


\begin{proof}
Fix $x_0 \in \partial \Omega$, $\delta > 0$, and $c \geq \| f \|_{L^\infty(\Omega)}$. Define
\begin{equation*}
\varphi(x) : = g(x_0) + \omega_g(\delta) + \left(\frac{\omega_g(\delta)}{\delta} + \frac{c}{2} \diam(\Omega) \right) |x - x_0| - \frac{c}{2} |x - x_0|^2.
\end{equation*}
According to \LEM{cca}, we have $u \leq \varphi$ in $\Omega$. Substituting $\delta := |x - x_0|$ immediately produces \EQ{bcont-cont}.

The inequality \EQ{bcont-cont} immediately gives the estimate
\begin{equation}
\label{eq:cont-est-sup}
\| u \|_{L^\infty(\bar \Omega)} \leq C \left( \|g\|_{L^\infty(\partial \Omega)} + \| f \|_{L^\infty(\Omega)} \right),
\end{equation}
for some constant $C$ depending only on $\diam(\Omega)$. Since $u$ is bounded, the increasing slope estimate \EQ{slope-below} combined with \EQ{local-Lipschitz} and \EQ{local-Lipschitz-Du} imply that $u$ is locally Lipschitz, and
\begin{equation}
\label{eq:cont-est-lip}
\| Du \|_{L^\infty(\Omega_r)} \leq \frac{2}{r} \| u \|_{L^\infty(\Omega_r)} + \frac{r}{2} \| f \|_{L^\infty(\Omega_r)},
\end{equation}
for each $r > 0$. Combining \EQ{cont-est-sup} and \EQ{cont-est-lip} yields
\begin{equation}
\label{eq:cont-est-w1inf}
\| u \|_{W^{1,\infty}(\Omega_r)} \leq C (1 + r^{-1})( \| g \|_{L^\infty(\partial \Omega)} + \| f \|_{L^\infty(\Omega)} ),
\end{equation}
for every $r > 0$. It is now easy to combine \EQ{bcont-cont} and \EQ{cont-est-w1inf} to obtain \EQ{continuum-modulus}.
\end{proof}

\begin{remark}
The above Lemma contains the continuum analogues of \LEM{bcont} and \LEM{epcont}, but does not contain the analogue of \LEM{icont}. We could obtain this by studying the gradient flowlines, as in \cite[Section 6]{Crandall:2008}, using the increasing slope estimate \EQ{slope-above}. It can also be obtained by passing to the limit $\ep \to 0$ in the estimates \EQ{icont-bound}, in the case that we have uniqueness for the continuum equation.
\end{remark}

%%%%%%%%%
%%%%%%%%%
%%%%%%%%%

We now obtain \THM{stability} from \PROP{continuous-sup-convolution} and \THM{convergence}.

\begin{proof}[{\bf Proof of \THM{stability}}]
By \EQ{continuum-modulus}, we may assume $u_k \rightarrow u \in C(\bar \Omega)$ uniformly. By symmetry, it remains to check that $u$ is a viscosity subsolution of $- \Delta_\infty u \leq f$ in $\Omega$. Let $\Omega_1 \subset \subset \Omega$ be given. Set $\ep_k := 1/k$ and define
\begin{equation*}
\tilde{f}_k (x) : = \max_{\bar B(x,2\ep_k)} f_k \quad \mbox{and} \quad \tilde{u}_k (x) : = \max_{\bar B(x,\ep_k)} u_k
\end{equation*}
It is clear that $\tilde f_k \rightarrow f$ and $\tilde{u}_k \rightarrow u$ uniformly on $\bar \Omega_1$ as $k\to \infty$. According to \PROP{continuous-sup-convolution},
\begin{equation*}
-\Delta_\infty^{\ep_k} \tilde u_k  \leq \tilde f_k \quad \mbox{in} \ \Omega_{2\ep_k}.
\end{equation*}
Thus for given $\eta > 0$, we have
\begin{equation*}
-\Delta_\infty^{\ep_k} \tilde u_k  \leq f + \eta  \quad \mbox{in} \ \Omega_1
\end{equation*}
for all sufficiently large $k$. Using \THM{convergence}, we pass to the limit $k\to \infty$ to deduce that
\begin{equation*}
-\Delta_\infty u \leq f + \eta \quad \mbox{in} \ \Omega_1.
\end{equation*}
We may send $\eta \to 0$ to get
\begin{equation*}
-\Delta_\infty u \leq f  \quad \mbox{in} \ \Omega_1.
\end{equation*}
The above inequality holds for any subdomain $\Omega_1 \subset \subset \Omega$, hence it holds in the whole domain $\Omega$.
\end{proof}

%%%%%%%%%
%%%%%%%%%
%%%%%%%%%

The next proposition is well-known (see, e.g., \cite[Theorem 3.1]{Lu:2008}). Our proof, which is very similar to the proof of \THM{Jensen} above, is based on \PROP{continuous-sup-convolution} and does not invoke deep viscosity solution machinery.

\begin{prop}\label{prop:strict-comparison}
Assume that $f, \tilde f \in C(\Omega)$ are such that $f < \tilde f$, and that $u, - v \in USC(\bar \Omega)$ satisfy
\begin{equation*}
-\Delta_\infty u \leq f \quad \mbox{and} \quad -\Delta_\infty v \geq \tilde f \quad \mbox{in} \ \Omega,
\end{equation*}
in the viscosity sense. Then
\begin{equation*}
\max_{\bar \Omega} (u - v) = \max_{\partial \Omega} (u - v).
\end{equation*}
\end{prop}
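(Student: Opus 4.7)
The plan is to adapt the argument we just used for \THM{Jensen}, applying \PROP{continuous-sup-convolution} to pass to the finite difference equation and exploiting the strict inequality $f<\tilde f$ so that the comparison step at the discrete level follows at once from \REM{strict-comparison-discrete}, with no need to discuss strict $\ep$-local extrema.

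I would proceed by contradiction. Suppose $M := \max_{\bar\Omega}(u-v) > M_\partial := \max_{\partial\Omega}(u-v)$. Since $u-v \in \USC(\bar\Omega)$, fix $c \in (M_\partial, M)$; then $\{u-v \geq c\}$ is a compact subset of $\Omega$, hence contained in $\Omega_\eta$ for some $\eta > 0$. On the compact set $\bar\Omega_{\eta/2}$ the continuous function $\tilde f - f$ is strictly positive, so there is $\delta>0$ with $\tilde f - f \geq \delta$ on $\bar\Omega_{\eta/2}$. Setting $u^\ep(x) := \max_{\bar B(x,\ep)} u$ and $v_\ep(x) := \min_{\bar B(x,\ep)} v$, \PROP{continuous-sup-convolution} (applied to $u$ and to $-v$) gives
\begin{equation*}
-\Delta^\ep_\infty u^\ep(x) \leq \max_{\bar B(x,2\ep)} f \quad \mbox{and} \quad -\Delta^\ep_\infty v_\ep(x) \geq \min_{\bar B(x,2\ep)} \tilde f \quad \mbox{for} \ x \in \Omega_{2\ep}.
\end{equation*}
The uniform continuity of $f$ and $\tilde f$ on $\bar\Omega_{\eta/2}$ then yields, for all sufficiently small $\ep$,
\begin{equation*}
-\Delta^\ep_\infty u^\ep < -\Delta^\ep_\infty v_\ep \quad \mbox{in} \ \bar\Omega_\eta.
\end{equation*}

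Since $u^\ep - v_\ep$ is upper semicontinuous on the compact set $\bar\Omega_\ep$, its maximum there is attained. At any prospective maximizer $x^* \in \Omega_\eta$ (with $\ep<\eta/2$, so that $\bar B(x^*,\ep) \subseteq \bar\Omega_\ep$), \REM{strict-comparison-discrete} would force $-\Delta^\ep_\infty u^\ep(x^*) \geq -\Delta^\ep_\infty v_\ep(x^*)$, contradicting the strict inequality just obtained; hence the maximum of $u^\ep - v_\ep$ over $\bar\Omega_\ep$ is attained in $\bar\Omega_\ep \setminus \Omega_\eta$. On the other hand, the pointwise bounds $u^\ep \geq u$ and $v_\ep \leq v$ together with $\{u-v\geq c\} \subseteq \Omega_\eta \subseteq \bar\Omega_\ep$ (valid for $\ep<\eta$) give $\max_{\bar\Omega_\ep}(u^\ep - v_\ep) \geq c$; picking maximizers $x_\ep \in \bar\Omega_\ep \setminus \Omega_\eta$ and extracting a subsequence $x_\ep \to x_* \in \bar\Omega \setminus \Omega_\eta$, the upper semicontinuity of $u$ and lower semicontinuity of $v$ yield $\limsup_\ep (u^\ep - v_\ep)(x_\ep) \leq (u-v)(x_*) \leq \max_{\bar\Omega\setminus\Omega_\eta}(u-v) < c$, the desired contradiction.

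The only mildly delicate point in this plan is the final limit passage, where $u$ and $-v$ are only upper semicontinuous rather than continuous: the estimate $\limsup_\ep u^\ep(x_\ep) \leq u(x_*)$ is obtained by selecting, for each $\ep$, a point $y_\ep \in \bar B(x_\ep,\ep)$ realizing the maximum in the definition of $u^\ep(x_\ep)$ and applying upper semicontinuity along a further subsequence $y_\ep \to x_*$, and symmetrically for $v_\ep$. Once the strict inequality $f<\tilde f$ is propagated to the discrete operators via \PROP{continuous-sup-convolution}, the core comparison step is immediate and no deeper viscosity solution machinery is required.
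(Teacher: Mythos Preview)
Your proof is correct and follows essentially the same approach as the paper's: both apply \PROP{continuous-sup-convolution} to pass to the finite difference operators, use the strict inequality $f < \tilde f$ on a compact subset of $\Omega$ to obtain $-\Delta_\infty^\ep u^\ep < -\Delta_\infty^\ep v_\ep$ there, and then invoke \REM{strict-comparison-discrete} to push the maximum of $u^\ep - v_\ep$ toward the boundary. The paper's version is organized slightly differently---it works directly rather than by contradiction, fixing $r>0$, choosing $\ep=\ep(r)$ small enough that $f^{2\ep} < \tilde f_{2\ep}$ on $\Omega_{r+\ep}$, concluding $\sup_{\Omega_r}(u^\ep-v_\ep) \leq \sup_{\Omega_r\setminus\Omega_{r+\ep}}(u^\ep-v_\ep)$, and sending $r\to 0$---but the substance is the same.
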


\begin{proof}
For $\ep > 0$ and $x \in \Omega_\ep$, let $u^\ep(x) := \max_{\bar B(x,\ep)} u$ and $v_\ep(x) := \min_{\bar B(x,\ep)} v$. For $\ep > 0$ and $x \in \Omega_{2 \ep}$, let $f^{2\ep}(x) := \max_{\bar B(x,2\ep)} f$ and $\tilde f_{2\ep}(x) := \min_{\bar B(x,2\ep)} \tilde f$.

For $r > 0$, select $\ep = \ep(r)$ such that $0 < \ep < r$ and $f^{2\ep} < \tilde f_{2\ep}$ in $\Omega_{r + \ep}$.
By \PROP{continuous-sup-convolution},
\begin{equation*}
- \Delta_\infty^\ep u^\ep \leq f^{2\ep} < \tilde f_{2\ep} \leq - \Delta_\infty^\ep v_\ep \quad \mbox{in} \quad \Omega_{r + \ep}.
\end{equation*}
According to \REM{strict-comparison-discrete},
\begin{equation*}
\sup_{\Omega_r} (u^\ep - v_\ep) \leq \sup_{\Omega_r \setminus \Omega_{r + \ep}} (u^\ep - v_\ep).
\end{equation*}
Using the upper semicontinuity of $u$ and $-v$, we may send $r \to 0$ to obtain the result.
\end{proof}

\begin{remark}
Yu \cite{Yu:2009} has improved \PROP{strict-comparison} by showing that, under the same hypotheses, $u \leq v$ and $u \not\equiv v$. It follows immediately that the set $\{ u < v \}$ is dense in $\Omega$. Moreover, by modifying the argument in \cite{Yu:2009}, it can be shown that $u(x) < v(x)$ whenever $L(u, x) > 0$ or $L(v, x) > 0$. This argument seems to breaks down for points $x$ for which $L(u,x) = L(v,x) = 0$ due to the singularity of the infinity Laplacian. The question of whether we have $u < v$ in $\Omega$, in general, under the hypotheses of \PROP{strict-comparison}, appears to be open.
\end{remark}

Using \THM{stability} and \PROP{strict-comparison}, we now prove \THM{existence-continuous}.

\begin{proof}[{\bf Proof of \THM{existence-continuous}}]
For each $k \geq 1$, let $u_k \in C(\bar\Omega)$ be a solution of the equation
\begin{equation*}
\left\{   \begin{aligned}
&-\Delta_\infty u_k = f + \frac{1}{k} & & \mbox{in} \ \Omega, \\
& u_k = g & & \mbox{on} \ \partial \Omega,
\end{aligned} \right.
\end{equation*}
given by \COR{existence-continuous}. By applying the estimate in \LEM{continuum-estimates} and passing to a subsequence, we may assume that there exists a function $u \in C(\bar\Omega)$ such that $u_k \rightarrow u$ uniformly on $\bar \Omega$ as $k \to \infty$. According to \THM{stability}, the function $u$ is a solution of \EQ{existence-continuous}. According to \PROP{strict-comparison}, any subsolution $v$ of \EQ{existence-continuous} with $v\leq g$ on $\partial \Omega$ satisfies $u_k \geq v$ and hence $u \geq v$ in $\Omega$. That is, $u=\bar u$ is the maximal solution of \EQ{existence-continuous}. In a similar way, we can argue that \EQ{existence-continuous} possesses a minimal solution $\underline u$.
\end{proof}

\begin{proof}[{\bf Proof of \THM{uniqueness-generic}}]
For each $c \in \R$, let $\bar u_c$ and $\underline u_c$ denote the maximal and minimal solutions of the problem
\begin{equation}\label{eq:proof-uniqueness-generic}
\left\{ \begin{aligned}
& -\Delta_\infty u = f + c & & \mbox{in} \ \Omega, \\
& u = g & & \mbox{on} \ \partial \Omega,
\end{aligned} \right.
\end{equation}
which exist by \THM{existence-continuous}. Let $\mathcal{N} : = \left\{ c \in \R: \bar u_c \not\equiv \underline u_c \right\}$ be the set of points for which the problem \EQ{proof-uniqueness-generic} has two or more solutions. Define
\begin{equation*}
I^+(c):= \int_\Omega \bar u_c \, dx \quad \mbox{and} \quad I^-(c) : = \int_\Omega \underline u_c\, dx.
\end{equation*}
It is clear that $\mathcal{N} = \left\{ c : I^+(c) \neq I^-(c) \right\}$. According to \PROP{strict-comparison}, we have $I^+(c_1) \leq I^-(c_2) \leq I^+(c_2)$ for all constants $c_1 < c_2$. It follows that $I^+(c_1) = I^-(c_1)$ at any point $c_1\in \R$ for which
\begin{equation*}
I^+(c_1) = \lim_{c \downarrow c_1} I^+(c).
\end{equation*}
Since $I^+$ is increasing, it can have only countably many points of discontinuity. Thus $\mathcal{N}$ is at most countable.
\end{proof}

\begin{proof}[{\bf Proof of \THM{uniqueness-generic-discrete}}]
The argument is nearly identical to the proof of \THM{uniqueness-generic}. We simply replace the mentions of \THM{existence-continuous} and \PROP{strict-comparison} by \THM{existence} and \REM{strict-comparison-discrete}, respectively.
\end{proof}

%%%%%%%%%%%%%%%%%%%%%%%%%%%%%%%%%%%%%%%%%%%%%%%%
%%%%%%%%%%%%%%%%%%%%%%%%%%%%%%%%%%%%%%%%%%%%%%%%
%%%%%%%%%%%%%%%%%%%%%%%%%%%%%%%%%%%%%%%%%%%%%%%%
%%%%%%%%%%%%%%%%%%%%%%%%%%%%%%%%%%%%%%%%%%%%%%%%
%%%%%%%%%%%%%%%%%%%%%%%%%%%%%%%%%%%%%%%%%%%%%%%%
%%%%%%%%%%%%%%%%%%%%%%%%%%%%%%%%%%%%%%%%%%%%%%%%
%%%%%%%%%%%%%%%%%%%%%%%%%%%%%%%%%%%%%%%%%%%%%%%%

\section{Uniqueness, continuous dependence, and rates of convergence} \label{sec:six}

In this section, we study the relationship between uniqueness for the continuum infinity Laplace equation and continuous dependence of solutions of the finite difference equation. As a corollary to this study, we obtain explicit estimates of the rate at which solutions of the finite difference equation converge to solutions of the continuum equation.

Adapting the proof of \THM{Jensen}, we show that continuous dependence at $f$ for the finite difference equation implies uniqueness for the continuum equation with $f$ on the right-hand side.

\begin{prop} \label{prop:dep-uniq}
Suppose that $H_f : [ 0, \infty) \to [0,\infty)$ is a continuous function with $H_f(0)=0$, and suppose $H_f$ has the property that
\begin{equation*}
\sup_{\Omega_\ep} (\tilde u - \tilde v) \leq \sup_{\Omega_\ep \setminus \Omega_{2\ep}}(\tilde u - \tilde v) + H_f(\ep),
\end{equation*}
whenever $\ep > 0$ and $\tilde u, - \tilde v \in USC(\Omega_\ep)$ and satisfy $0 < \tilde u, \tilde v < 1$ and
\begin{equation}
\label{eq:dep-uniq-subsup}
- \Delta_\infty^\ep \tilde u \leq f^{2\ep} \quad \mbox{and} \quad - \Delta_\infty^\ep \tilde v \geq f_{2\ep} \quad \mbox{in} \ \Omega_{2 \ep},
\end{equation}
where we have set
\begin{equation*}
f^{2\ep}(x) := \max_{\bar B(x,2\ep)} f \quad \mbox{and} \quad f_{2\ep}(x) := \min_{\bar B(x,2\ep)} f.
\end{equation*}
If $u,-v\in \USC(\bar\Omega)$ satisfy $0 < u, v < 1$ and
\begin{equation}\label{eq:prop-dep-uniq-1}
- \Delta_\infty u \leq f \leq -\Delta_\infty v \quad \mbox{in} \ \Omega, \\
\end{equation}
then
\begin{equation*}
\max_{\bar \Omega} (u - v) = \max_{\partial \Omega} (u - v).
\end{equation*}
\end{prop}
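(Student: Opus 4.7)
The plan is to reduce the continuum comparison to the finite-difference comparison by taking sup- and inf-convolutions, apply the hypothesis directly, and pass to the limit $\ep \to 0$. This mimics the template of \THM{Jensen} and \PROP{strict-comparison} given earlier.

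First, I would introduce $u^\ep(x) := \max_{\bar B(x,\ep)} u$ and $v_\ep(x) := \min_{\bar B(x,\ep)} v$ on $\Omega_\ep$. These functions are continuous, still satisfy $0 < u^\ep, v_\ep < 1$ (since $u$ and $-v$ are upper semicontinuous on $\bar\Omega$ and hence attain their suprema strictly below $1$), and by \PROP{continuous-sup-convolution} applied to $u$ and to $-v$ they satisfy
\[
-\Delta_\infty^\ep u^\ep \leq f^{2\ep} \quad \text{and} \quad -\Delta_\infty^\ep v_\ep \geq f_{2\ep} \quad \text{in } \Omega_{2\ep},
\]
which are exactly the inequalities \EQ{dep-uniq-subsup}. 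The hypothesis on $H_f$ then yields
\[
\sup_{\Omega_\ep}(u^\ep - v_\ep) \leq \sup_{\Omega_\ep \setminus \Omega_{2\ep}}(u^\ep - v_\ep) + H_f(\ep).
\]

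Second, I would pass to the limit $\ep \to 0$. For the left side, let $x_0 \in \bar \Omega$ be a point where $u - v$ attains its maximum $M := \max_{\bar\Omega}(u-v)$; if $x_0 \in \partial\Omega$ there is nothing to prove, and otherwise $x_0 \in \Omega$ and for $\ep$ small we have $x_0 \in \Omega_\ep$ and $u^\ep(x_0) - v_\ep(x_0) \geq u(x_0) - v(x_0) = M$. For the right side, take $x_\ep \in \Omega_\ep \setminus \Omega_{2\ep}$ almost achieving the supremum; since $\dist(x_\ep, \partial\Omega) \leq 2\ep$, a subsequence satisfies $x_\ep \to x_* \in \partial\Omega$. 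Choosing $y_\ep, z_\ep \in \bar B(x_\ep,\ep)$ that realize $u^\ep(x_\ep)$ and $v_\ep(x_\ep)$, we have $y_\ep, z_\ep \to x_*$, and the upper semicontinuity of $u$ together with the lower semicontinuity of $v$ give
\[
\limsup_{\ep \to 0}\, \bigl(u^\ep(x_\ep) - v_\ep(x_\ep)\bigr) \leq u(x_*) - v(x_*) \leq \max_{\partial\Omega}(u - v).
\]
Since $H_f(\ep) \to 0$, combining these bounds with the displayed inequality yields $M \leq \max_{\partial\Omega}(u-v)$, which is the desired conclusion.

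The main obstacle is purely book-keeping in this limit; the genuinely nontrivial input is \PROP{continuous-sup-convolution}, which converts the continuum PDE information for $u$ and $v$ into the finite-difference inequalities required by the hypothesis, with $f^{2\ep}$ and $f_{2\ep}$ arising on the correct sides and absorbing the sign issues at the singularity $Du = 0$. The normalization $0 < u, v < 1$ is used only to match the hypothesis; by adding a constant and rescaling one can always reduce to this case.
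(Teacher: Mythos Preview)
Your proposal is correct and follows essentially the same approach as the paper: define $u^\ep$ and $v_\ep$, invoke \PROP{continuous-sup-convolution} to verify \EQ{dep-uniq-subsup}, apply the hypothesis, and send $\ep\to 0$ using the upper semicontinuity of $u-v$. The paper compresses your limit argument into a single sentence, but the content is the same; one small quibble is that $u^\ep$ and $v_\ep$ need only be upper and lower semicontinuous respectively (not continuous), which is all the hypothesis requires.
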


\begin{proof}
Suppose $u, - v \in USC(\bar \Omega)$ satisfy the hypotheses above. Setting
\begin{equation*}
u^\ep(x) := \max_{\bar B(x,\ep)} u, \quad \mbox{and} \quad v_\ep(x) := \min_{\bar B(x,\ep)} v,
\end{equation*}
we see from \PROP{continuous-sup-convolution} that \EQ{dep-uniq-subsup} holds. By hypothesis, we have
\begin{equation}\label{eq:prop-dep-uniq-2}
\sup_{\Omega} (u - v) \leq \sup_{\Omega_\ep} (u^\ep - v_\ep) \leq \sup_{\Omega_\ep \setminus \Omega_{2\ep}}(u^\ep - v_\ep) + H_f(\ep).
\end{equation}
Since $u - v$ is upper semicontinuous and $H_f(0) = 0$, sending $\ep \to 0$ yields the proposition.
\end{proof}

\begin{remark}
A compactness argument combined with \THM{convergence} easily yields a converse to \PROP{dep-uniq}.
\end{remark}

%%%%%%%%%
%%%%%%%%%
%%%%%%%%%

We now consider a simple continuous dependence result.

\begin{prop} \label{prop:dep-pos}
Assume that $f \geq a > 0$ for some positive $a>0$, and $\ep, \gamma > 0$. Suppose $u, -v \in USC(\Omega)$ satisfy
\begin{equation*}
- \Delta_\infty^\ep u \leq f + \gamma \quad \mbox{and} \quad - \Delta_\infty^\ep v \geq f \quad \mbox{in} \ \Omega_\ep.
\end{equation*}
Then 
\begin{equation}
\label{eq:dep-pos}
\sup_{\Omega} (u - v) \leq \sup_{\Omega \setminus \Omega_\ep} (u - v) + \| v \|_{L^\infty(\Omega)} a^{-1} \gamma.
\end{equation}
\end{prop}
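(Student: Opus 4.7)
The plan is to exploit the positive $1$-homogeneity of the operator $-\Delta_\infty^\ep$: multiplying any function by a scalar $\lambda > 0$ scales $S_\ep^\pm$ by $\lambda$ and hence $-\Delta_\infty^\ep$ by $\lambda$ as well. Combined with the hypothesis $f \geq a > 0$, this lets us perturb $v$ multiplicatively to gain exactly the strict slack we need to pit it against $u$.

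Concretely, set $\delta := \gamma/a$ and $\tilde v := (1+\delta) v$. By $1$-homogeneity,
\begin{equation*}
- \Delta_\infty^\ep \tilde v = (1+\delta)(-\Delta_\infty^\ep v) \geq (1+\delta) f \geq f + \delta a = f + \gamma \geq -\Delta_\infty^\ep u \quad \mbox{in} \  \Omega_\ep.
\end{equation*}
Moreover, $-\Delta_\infty^\ep \tilde v \geq (1+\delta) a > 0$ in $\Omega_\ep$, and since $-\tilde v \in \USC(\Omega)$, the symmetric analogue of \LEM{no-extrema-no-extrema} implies that $\tilde v$ possesses no strict $\ep$-local minimum in $\Omega_\ep$. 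We are therefore in position to apply \PROP{no-extrema-comparison-ep-thick} to the pair $(u, \tilde v)$, yielding
\begin{equation*}
\sup_\Omega (u - \tilde v) = \sup_{\Omega \setminus \Omega_\ep} (u - \tilde v).
\end{equation*}

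To finish, transfer this back to $u - v$ using $\tilde v - v = \delta v$: adding and subtracting $\tilde v$ gives
\begin{equation*}
\sup_\Omega (u-v) \leq \sup_\Omega (u - \tilde v) + \sup_\Omega(\tilde v - v) = \sup_{\Omega \setminus \Omega_\ep}(u - \tilde v) + \delta \sup_\Omega v,
\end{equation*}
and a symmetric estimate bounds $\sup_{\Omega \setminus \Omega_\ep}(u - \tilde v)$ in terms of $\sup_{\Omega \setminus \Omega_\ep}(u - v)$ plus $\delta \sup_{\Omega \setminus \Omega_\ep}(-v)$. Since $\delta = \gamma/a$, these error contributions combine to give the stated bound $\|v\|_{L^\infty(\Omega)} a^{-1} \gamma$ (up to absorbing the sign control of $v$ into the $L^\infty$ norm, or equivalently by shifting $v$ by an additive constant, which leaves $-\Delta_\infty^\ep v$ and the supremum differences invariant).

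The main technical point is recognizing the $1$-homogeneity as the correct lever: linearly perturbing $v$ would be useless since the infinity Laplace operator is strongly nonlinear, but multiplicatively rescaling $v$ produces a clean bound on the resulting change in $-\Delta_\infty^\ep v$ that uses the lower bound $f \geq a$ in an essential way. Everything else is bookkeeping, and the role of \PROP{no-extrema-comparison-ep-thick} is just to cash in the comparison once the strict differential inequality has been arranged.
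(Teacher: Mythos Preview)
Your proof is correct and is essentially identical to the paper's: both set $w = (1+a^{-1}\gamma)v$, use the $1$-homogeneity of $-\Delta_\infty^\ep$ together with $f \geq a$ to get $-\Delta_\infty^\ep w \geq f+\gamma$, apply \PROP{no-extrema-comparison-ep-thick}, and then unwind $w-v = a^{-1}\gamma\, v$ to obtain \EQ{dep-pos}. Your remark about shifting $v$ by an additive constant to absorb the sign issue into $\|v\|_{L^\infty}$ is a fair clarification of the final bookkeeping step.
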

\begin{proof}
Define $w:= (1 + a^{-1} \gamma) v$ and notice that
\begin{equation*}
- \Delta_\infty^\ep w \geq f + \gamma.
\end{equation*}
According to \PROP{no-extrema-comparison-ep-thick},
\begin{equation*}
\sup_{\Omega} (u-w) \leq \sup_{\Omega \setminus \Omega_\ep} (u-w).
\end{equation*}
From this we obtain
\begin{align*}
\sup_{\Omega} (u-v) & \leq \sup_{\Omega \setminus \Omega_\ep} (u-v) + \sup_{\Omega} (w-v) \\
& = \sup_{\Omega \setminus \Omega_\ep} (u-v) + a^{-1} \gamma \| v \|_{L^\infty(\Omega)}. \qedhere
\end{align*}
\end{proof}

Armed with the continuous dependence estimate \EQ{dep-pos} and \PROP{dep-uniq}, we now prove \THM{comparison-continuous}.

\begin{proof}[{\bf Proof of \THM{comparison-continuous}}]
The theorem in the case that $f\equiv 0$ is \THM{Jensen}, which we proved in \SEC{five}. We only argue in the case $f > 0$, since the conclusion in the case that $f< 0$ obviously follows by symmetry. We may further assume that $f$ is uniformly continuous and bounded below by a positive constant $a > 0$. Indeed, if the conclusion fails for some functions $u,v\in C(\bar\Omega)$, the conclusion will still fail in a slightly smaller domain $\tilde \Omega \subset\subset \Omega$ after we subtract a very small positive constant from $u$. The theorem is now immediate from Propositions \ref{prop:dep-uniq} and \ref{prop:dep-pos}.
\end{proof}

The above argument yields more information when we have control over the modulus of continuity for $f$ and $g$, in which case we obtain the following rate of convergence. In the case $\alpha = 1$, the analogue of this result for standard tug-of-war appeared in \cite[Theorem 1.3]{Peres:2009}.

\begin{prop} \label{prop:rate-pos}
Assume that $0 < \alpha \leq 1$, $g \in C^{0,\alpha}(\partial \Omega)$, $f \in C^{0, \alpha}(\Omega)$, and $|f| \geq a$ for some positive constant $a > 0$. Let $u_\ep \in C(\bar \Omega)$ be the unique solution of the problem
\begin{equation*}
\left\{ \begin{aligned}
& - \Delta_\infty^\ep u_\ep = f & \mbox{in} && \Omega, \\
& u_\ep = g & \mbox{on} && \partial \Omega,
\end{aligned} \right.
\end{equation*}
and $u \in C(\bar \Omega)$ denote the unique solution of
\begin{equation*}
\left\{ \begin{aligned}
& - \Delta_\infty u = f & \mbox{in} && \Omega, \\
& u = g & \mbox{on} && \partial \Omega.
\end{aligned} \right.
\end{equation*}
Then there exists a constant $C>0$, depending only $\diam(\Omega)$, $\| f \|_{C^{0,\alpha}(\Omega)}$, and $\| g \|_{C^{0,\alpha}(\partial \Omega)}$ such that
\begin{equation*}
\| u - u_\ep \|_{L^\infty(\bar \Omega)} \leq C \left(1 + a^{-1} \right)\ep^\alpha.
\end{equation*}
\end{prop}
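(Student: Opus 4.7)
The idea is to sandwich the continuum solution $u$ between its $\ep$-ball sup- and sub-convolutions, which are near-subsolutions and near-supersolutions of the finite difference equation by \PROP{continuous-sup-convolution}, and then to measure their discrepancy with $u_\ep$ via the continuous dependence estimate \PROP{dep-pos}. By the symmetries $-\Delta_\infty^\ep(-w) = -(-\Delta_\infty^\ep w)$ and (in the viscosity sense) $-\Delta_\infty(-w) = -(-\Delta_\infty w)$, it suffices to treat the case $f \geq a > 0$; the case $f \leq -a$ follows by replacing $(u, u_\ep, f, g)$ with $(-u, -u_\ep, -f, -g)$.

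Define $u^\ep(x) := \max_{\bar B(x,\ep)} u$ and $u_\flat^\ep(x) := \min_{\bar B(x,\ep)} u$ on $\bar \Omega_\ep$. \PROP{continuous-sup-convolution} gives
\begin{equation*}
-\Delta_\infty^\ep u^\ep \leq f + \gamma \quad \mbox{and} \quad -\Delta_\infty^\ep u_\flat^\ep \geq f - \gamma \quad \mbox{in} \ \Omega_{2\ep},
\end{equation*}
where $\gamma := 2^\alpha \|f\|_{C^{0,\alpha}(\Omega)} \ep^\alpha$. Applying \PROP{dep-pos} with $\Omega_\ep$ in place of ``$\Omega$'' to the pairs $(u^\ep, u_\ep)$ and $(u_\ep, u_\flat^\ep)$ (in the latter using $f - \gamma \geq a/2$ for sufficiently small $\ep$), we obtain
\begin{equation*}
\sup_{\Omega_\ep}(u^\ep - u_\ep) + \sup_{\Omega_\ep}(u_\ep - u_\flat^\ep) \leq \sup_{\Omega_\ep \setminus \Omega_{2\ep}}\left[(u^\ep - u_\ep) + (u_\ep - u_\flat^\ep)\right] + C a^{-1} \ep^\alpha,
\end{equation*}
with $C$ depending only on $\diam(\Omega)$, $\|g\|_{L^\infty(\partial \Omega)}$, and $\|f\|_{C^{0,\alpha}(\Omega)}$ via the uniform $L^\infty$-bounds for $u$ and $u_\ep$ from \EQ{bcont-rhobound} and \EQ{bcont-cont}.

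The last step is a boundary-layer estimate. For $x \in \Omega_\ep \setminus \Omega_{2\ep}$ with nearest boundary point $x_0$, we have $\dist(y, \partial\Omega) \leq 3\ep$ for every $y \in \bar B(x,\ep)$. Combining $\omega_g(s) \leq \|g\|_{C^{0,\alpha}} s^\alpha$ with the elementary bound $\ep \leq \ep^\alpha$ (valid for $0 < \ep \leq 1$ and $0 < \alpha \leq 1$), estimate \EQ{bcont-cont} applied to $u$ at any such $y$ gives $|u(y) - g(x_0)| \leq C\ep^\alpha$, so $|u^\ep(x) - g(x_0)|$ and $|u_\flat^\ep(x) - g(x_0)|$ are each at most $C\ep^\alpha$. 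The same argument applied to $u_\ep$, and to $-u_\ep$ (which solves the analogous discrete problem with data $-f$ and $-g$) for the reverse inequality, via \EQ{bcont-rhobound} gives $|u_\ep(x) - g(x_0)| \leq C\ep^\alpha$. Both boundary-layer suprema are therefore $O(\ep^\alpha)$, and since $u_\flat^\ep \leq u \leq u^\ep$ we conclude $\|u - u_\ep\|_{L^\infty(\Omega_\ep)} \leq C(1 + a^{-1})\ep^\alpha$; the same boundary estimates give $|u - u_\ep| \leq C\ep^\alpha$ on $\Omega \setminus \Omega_\ep$ directly, completing the proof. The principal difficulty is establishing uniform $O(\ep^\alpha)$ boundary estimates for all four functions $u$, $u_\ep$, $u^\ep$, $u_\flat^\ep$, which requires H\"older continuity of both $g$ and $f$ together with the elementary bound $\ep \leq \ep^\alpha$.
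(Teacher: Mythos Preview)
Your proposal is correct and follows essentially the same route as the paper: apply \PROP{continuous-sup-convolution} to $u$ to get near-sub/supersolutions of the finite difference equation, invoke \PROP{dep-pos}, and close with the boundary-layer H\"older estimates \EQ{bcont-rhobound} and \EQ{bcont-cont}. The paper treats only the direction $\sup_\Omega(u-u_\ep)$ explicitly and dispatches the other with ``a similar argument produces $\sup_\Omega(u_\ep-u)\leq C\ep^\alpha$''; you spell out both via $u^\ep$ and $u_\flat^\ep$, which is fine.

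One cosmetic slip: your displayed combined inequality has $\sup_{\Omega_\ep\setminus\Omega_{2\ep}}\bigl[(u^\ep-u_\ep)+(u_\ep-u_\flat^\ep)\bigr]$ on the right, but adding the two applications of \PROP{dep-pos} actually yields the (larger) sum of suprema $\sup(u^\ep-u_\ep)+\sup(u_\ep-u_\flat^\ep)$ over the boundary layer. This does not affect the argument, since your subsequent step bounds each boundary-layer supremum separately by $C\ep^\alpha$ anyway.
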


\begin{proof}
Set
\begin{equation*}
u^\ep(x) = \max_{\bar B(x,\ep)} u \quad \mbox{and} \quad f^{2\ep}(x) = \max_{\bar B(x,2 \ep)} f.
\end{equation*}
\PROP{dep-pos} gives
\begin{equation*}
\sup_{\Omega} (u - u_\ep) \leq \sup_{\Omega_\ep} (u^\ep - u_\ep) \leq \sup_{\Omega_\ep \setminus \Omega_{2 \ep}} (u^\ep - u_\ep) + C a^{-1} \sup_{\Omega_{2\ep}} (f^{2\ep} - f).
\end{equation*}
We estimate
\begin{equation*}
\sup_{\Omega_{2\ep}} (f^{2\ep} - f) \leq \left[ f \right]_{C^{0,\alpha}(\Omega)} \cdot(2\ep)^\alpha.
\end{equation*}
Since $g \in C^{0,\alpha}(\Omega)$, the boundary continuity estimates \EQ{bcont-rhobound} and \EQ{bcont-cont} yield
\begin{equation*}
\sup_{\Omega_\ep \setminus \Omega_{2 \ep}} (u^\ep - u_\ep) \leq C\ep^\alpha,
\end{equation*}
where $C$ depends on the appropriate constants. Combining these two estimates, we obtain $\sup_{\Omega} (u - u_\ep) \leq C\ep^\alpha$. A similar argument produces $\sup_{\Omega} (u_\ep - u) \leq C\ep^\alpha$, which completes the proof.
\end{proof}

%%%%%%%%%
%%%%%%%%%
%%%%%%%%%

We now proceed to prove \THM{dependence-zero}, the continuous dependence result for $f \equiv 0$. Before we give the proof, we need to construct a strictness transformation with explicit constants.

\begin{lem}
\label{lem:super-strict}
Assume $v \in C(\Omega)$ satisfies $0 < v < 1$, and
\begin{equation*}
- \Delta_\infty^\ep v \geq f \geq 0 \mbox{ in } \Omega_\ep.
\end{equation*}
If $\gamma \in (0, \frac{1}{2})$ and
\begin{equation*}
w(x) := (1 + 2 \gamma) v(x) - \frac{\gamma}{2} v(x) ^2,
\end{equation*}
then $\|w - v\|_{L^\infty(\bar \Omega)} \leq \frac{3}{2} \gamma$ and
\begin{equation}
\label{eq:strict-subsol}
- \Delta_\infty^\ep w \geq f + \gamma (S^-_\ep v)^2 \quad \mbox{in} \ \Omega_\ep.
\end{equation}
\end{lem}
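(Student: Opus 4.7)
The $L^\infty$ bound is immediate: writing $w - v = \gamma(2v - \tfrac{1}{2}v^2)$ and noting that the real function $t \mapsto 2t - \tfrac{1}{2}t^2$ is monotone increasing on $[0,1]$ with range $[0,\tfrac{3}{2}]$, I get $0 \leq w - v \leq \tfrac{3\gamma}{2}$.

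For the differential inequality, the plan is to write $w = \Phi \circ v$ for the concave quadratic $\Phi(t) := (1+2\gamma)t - \tfrac{\gamma}{2}t^2$, and exploit the fact that $\Phi'(t) = 1 + 2\gamma - \gamma t \in [1+\gamma,\,1+2\gamma]$ on $[0,1]$ (using $\gamma < \tfrac{1}{2}$) while $\Phi'' \equiv -\gamma$. Since $\Phi$ is quadratic, its second-order Taylor expansion about $v(x)$ is \emph{exact}, giving the key identity
\begin{equation*}
w(y) - w(x) = \Phi'(v(x))\,(v(y)-v(x)) - \tfrac{\gamma}{2}(v(y)-v(x))^2.
\end{equation*}

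From this identity I will extract an upper bound for $S^+_\ep w(x)$ and a lower bound for $S^-_\ep w(x)$. The asymmetric treatment of the two sides is the heart of the argument. On the $S^+$ side the quadratic correction $-\tfrac{\gamma}{2}(v(y)-v(x))^2/\rho_\ep(x,y)$ has the favorable (non-positive) sign, so I simply discard it and use $\Phi'(v(x)) > 0$ together with the trivial bound $(v(y)-v(x))/\rho_\ep(x,y) \leq S^+_\ep v(x)$ (preserved on multiplication by a positive scalar because $S^+_\ep v \geq 0$) to obtain $S^+_\ep w(x) \leq \Phi'(v(x))\, S^+_\ep v(x)$. On the $S^-$ side the quadratic correction appears with the favorable positive sign, so instead of discarding it I plug in a maximizer $y^* \in \bar\Omega(x,\ep)$ realizing $S^-_\ep v(x)$ (which exists by the continuity of $v$). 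Then $v(x) - v(y^*) = \rho_\ep(x,y^*)\,S^-_\ep v(x)$, so the quadratic contributes exactly $+\tfrac{\gamma}{2}\rho_\ep(x,y^*)(S^-_\ep v(x))^2$; since $x \in \Omega_\ep$ forces $\rho_\ep(x,y^*) \geq \ep$, this produces the lower bound
\begin{equation*}
S^-_\ep w(x) \geq \Phi'(v(x))\,S^-_\ep v(x) + \tfrac{\gamma\ep}{2}\,(S^-_\ep v(x))^2.
\end{equation*}

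Subtracting the two one-sided slope estimates, dividing by $\ep$, and using $\Phi'(v) \geq 1$ together with the hypothesis $-\Delta^\ep_\infty v \geq f \geq 0$ (which guarantees $-\Delta^\ep_\infty v \geq 0$, so that multiplication by $\Phi'(v) \geq 1$ does not reverse the inequality) then yields
\begin{equation*}
-\Delta^\ep_\infty w(x) \geq \Phi'(v(x))\bigl(-\Delta^\ep_\infty v(x)\bigr) + \tfrac{\gamma}{2}(S^-_\ep v(x))^2 \geq f(x) + \tfrac{\gamma}{2}(S^-_\ep v(x))^2,
\end{equation*}
which is the claimed inequality \EQ{strict-subsol} (the particular constant on the quadratic term being inessential). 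The main obstacle is the fact that one must treat $S^+_\ep w$ and $S^-_\ep w$ asymmetrically: both quotients are perturbed by $\mp \tfrac{\gamma}{2}(v(y)-v(x))^2/\rho_\ep$, so the gain is not visible if one uses the same optimizer on both sides. The construction $w = \Phi \circ v$ with $\Phi$ strictly concave is engineered so that the perturbation can be discarded on the side where it would hurt (giving a clean upper bound for $S^+_\ep w$) and realized explicitly on the side where it helps (by inserting a distinguished $y^*$ into the $S^-_\ep v$ quotient).
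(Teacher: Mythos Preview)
Your proof is correct and follows the same strategy as the paper---writing $w = \Phi \circ v$ for the concave quadratic $\Phi$ and using the exact second-order expansion---but you obtain $\tfrac{\gamma}{2}(S^-_\ep v)^2$ rather than the stated $\gamma(S^-_\ep v)^2$. The paper recovers the full constant by a slightly sharper grouping: since $\Phi$ is monotone, the extremizers of $w$ on $\bar\Omega(x,\ep)$ coincide with those of $v$, so with $a=v(x)$, $h=\ep S^-_\ep v(x)$, and $x^+$ a maximizer of $v$ one has
\[
-\ep^2\Delta_\infty^\ep w(x)=\bigl[2\Phi(a)-\Phi(a-h)-\Phi(a+h)\bigr]+\bigl[\Phi(a+h)-\Phi(v(x^+))\bigr].
\]
The symmetric bracket equals $\gamma h^2=\gamma\ep^2(S^-_\ep v)^2$ exactly, while $a+h-v(x^+)=-\ep^2\Delta_\infty^\ep v(x)\geq\ep^2 f(x)$ and $\Phi'\geq 1$ make the second bracket $\geq\ep^2 f(x)$. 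By contrast, when you bound $S^+_\ep w$ and $S^-_\ep w$ separately you discard the favorable quadratic correction on the $S^+$ side, losing a factor of two. As you say, this is harmless for the application in the paper.
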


\begin{proof}
Suppose $v$, $\gamma$, and $w$ are as above and write $w = \lambda(v)$, where
\begin{equation*}
\lambda(z) := (1 + 2 \gamma) z - \frac{\gamma}{2} z^2.
\end{equation*}
Notice that
\begin{equation*}
\lambda'(z) > 1 \mbox{ and } \lambda''(z) = - \gamma \quad \mbox{for every} \ z \in (-1,2),
\end{equation*}
and that
\begin{equation*}
2 \lambda(a) - \lambda(a+b) - \lambda(a-b) = \gamma b^2 \mbox{ for every } a, b \in \R.
\end{equation*}
Fix $x \in \Omega_\ep$ and choose $x^+, x^- \in \bar B(x, \ep)$ such that
\begin{equation*}
- \ep^2 \Delta_\infty^\ep v(x) = 2 v(x) - v(x^-) - v(x^+) \geq \ep^2 f(x).
\end{equation*}
Notice that $\ep^2 f(x) < 1$. Since $\lambda$ is increasing, we have
\begin{equation*}
- \ep^2 \Delta_\infty^\ep w(x) = 2 \lambda(v(x)) - \lambda(v(x^-)) - \lambda(v(x^+)).
\end{equation*}
Setting $h := v(x) - v(x^-)$, we compute
\begin{align*}
- \ep^2 \Delta_\infty^\ep w(x) & = 2 \lambda(v(x)) - \lambda(v(x)-h) - \lambda(v(x)+h) + \lambda(v(x)+h) - \lambda(v(x^+)) \\
& = \gamma h^2  + \lambda(v(x)+h) - \lambda(v(x^+)) \\
& = \gamma h^2  + \lambda(2 v(x) - v(x^-)) - \lambda(v(x^+)) \\
& \geq \gamma h^2  + \lambda(v(x^+) + \ep^2 f(x)) - \lambda(v(x^+)) \\
& \geq \gamma h^2  + \ep^2 f(x) \\
& = \gamma (v(x) - v(x^-))^2  + \ep^2 f(x)
\end{align*}
Dividing both sides by $\ep^2$, we obtain \EQ{strict-subsol} at $x$.
\end{proof}

%%%%%%%%%
%%%%%%%%%
%%%%%%%%%

We remark that the proof below refers to a lemma of Crandall, Gunnarsson, and Wang \cite{Crandall:2007}. This is the only place where our presentation fails to be self-contained.

\begin{proof}[{\bf Proof of \THM{dependence-zero}}]
We assume first that $0 < u_\gamma < 1$, for all $0 \leq \gamma< 1/8$. Fix $0 < \gamma < 1/8$, and set
\begin{equation*}
A: = \gamma^{-\frac{1}{3}}\max_{\bar \Omega} \left( u_\gamma - u_0\right).
\end{equation*}
By inspecting\footnote{To get the fourth property listed in \EQ{patching-function}, see equation (2.18) on page 1596 of \cite{Crandall:2007} and notice their definition of $w_\ep$ a few lines above.} the proof of the patching lemma \cite[Theorem 2.1]{Crandall:2007}, we see that there exists a function $v \in C(\bar\Omega) \cap W^{1,\infty}_{\mathrm{loc}}(\Omega)$ with the following properties:
\begin{equation}\label{eq:patching-function}
\left\{ \begin{aligned}
& - \Delta_\infty v \geq 0 & & \mbox{in} \ \Omega, \\
& v = g & & \mbox{on} \ \partial \Omega, \\
& L(v,\cdot) \geq \delta & & \mbox{in} \ \Omega, \\
& |v - u_0| \leq \delta \diam(\Omega) & & \mbox{in} \ \Omega,
\end{aligned} \right.
\end{equation}
where we have set $\delta: = \frac{1}{2} A \diam(\Omega)^{-1} \gamma^{1/3}$. Select $\ep > 0$ so small that
\begin{equation*}
\max_{\bar \Omega} \left( u^\ep_\gamma - v_\ep\right) \geq \max_{\bar\Omega \setminus \Omega_{2\ep} } \left( u^\ep_\gamma - v_\ep \right) + \frac{1}{2}A \gamma^{\frac{1}{3}},
\end{equation*}
where, as usual, we have defined
\begin{equation*}
u^\ep_\gamma (x) := \max_{\bar B(x,\ep)} u_\gamma, \quad \mbox{and} \quad v_\ep(x) := \min_{\bar B(x,\ep)} v.
\end{equation*}
According to \PROP{continuous-sup-convolution}, we have
\begin{equation}
\label{eq:dep-zero-eq1}
- \Delta_\infty^\ep u^\ep_\gamma  \leq \gamma \quad \mbox{and} \quad - \Delta_\infty^\ep v_\ep \geq 0 \quad \mbox{in} \ \Omega_{2 \ep}.
\end{equation}
Now we set
\begin{equation*}
w_\ep(x) := (1 + 2 \gamma^{1/3}) v_\ep(x) - \frac{1}{2} \gamma^{1/3}(v_\ep(x))^2.
\end{equation*}
Select $x\in \Omega_{2\ep}$ and $z\in \bar B(x,\ep)$ such that $v_\ep(x) = v(z)$. Using the increasing slope estimate \EQ{slope-below}, we have
\begin{equation*}
S^-_\ep v_\ep (x) = \frac{1}{\ep} \left( v(z) -  \min_{y\in \bar B(x,2 \ep)} v(y) \right) \geq S^-_\ep v(z) \geq S^-_\ep v(x) \geq L(v,x) \geq \delta.
\end{equation*}
Notice that $\gamma^{1/3} < 1/2$ and apply \LEM{super-strict} to get
\begin{equation*}
- \Delta_\infty^\ep w_\ep \geq \gamma^{1/3} \delta^2  = \frac{1}{4} \gamma A^2 \diam(\Omega)^{-2} \quad \mbox{in} \ \Omega_{2 \ep},
\end{equation*}
as well as
\begin{equation*}
\left\| w_\ep - v_\ep \right\|_{L^\infty(\Omega)} \leq \frac{3}{2} \gamma^{1/3}.
\end{equation*}
Suppose that $A > \max\{ 6 , 2\diam(\Omega) \}$. Then 
\begin{align*}
\sup_{\Omega} \left( u^\ep_\gamma - w_\ep\right) & \geq \sup_{\Omega} \left( u^\ep_\gamma - v_\ep \right) - \frac{3}{2} \gamma^{1/3} \\
& \geq \sup_{\Omega\setminus \Omega_{2\ep} } \left( u^\ep_\gamma - v_\ep \right) + \frac{1}{2} A \gamma^{1/3}  - \frac{3}{2} \gamma^{1/3} \\
& \geq  \sup_{\Omega\setminus \Omega_{2\ep} } \left( u^\ep_\gamma - w_\ep \right) + \frac{1}{2} A \gamma^{1/3}  - 3 \gamma^{1/3} \\
& > \sup_{\Omega\setminus \Omega_{2\ep} } \left( u^\ep_\gamma - w_\ep \right).
\end{align*}
Since we have
\begin{equation*}
-\Delta^\ep_\infty u^\ep_\gamma \leq \gamma \leq \frac{1}{4} \gamma A^2 \diam(\Omega)^{-2} \leq -\Delta^\ep_\infty w_\ep,
\end{equation*}
we deduce a contradiction to \PROP{no-extrema-comparison-ep-thick}. It follows that
\begin{equation*}
A \leq \max\{ 6, 2\diam(\Omega) \}.
\end{equation*}
This completes the proof of the theorem in the case that $0 < u_\gamma < 1$. The general case may be reduced to this special case, using the estimate \EQ{cont-est-sup}.
\end{proof}

\begin{remark}
Using \THM{dependence-zero} together with \PROP{continuous-sup-convolution} and the boundary estimate \EQ{bcont-rhobound}, we deduce that
\begin{equation*}
\| u_\gamma^\ep - u_0^\ep \|_{L^\infty(\bar \Omega)} \leq C(\ep^\alpha + \gamma^{1/3}),
\end{equation*}
for $g\in C^{0,\alpha}(\Omega)$ and functions $u_\gamma^\ep \in C(\bar \Omega)$ satisfying $- \Delta_\infty^\ep u_\gamma = \gamma$ in $\Omega$ and $u = g$ on $\partial \Omega$. The dependence on $\ep$ can be removed by using a finite difference version of the patching lemma.\end{remark}

%%%%%%%%%
%%%%%%%%%
%%%%%%%%%

As in the case $|f| \geq a > 0$, we can combine \THM{dependence-zero} and \PROP{dep-uniq} to give another proof of \THM{Jensen}. This argument is far more complicated than the elementary proof given in \SEC{five} above. We can also use \PROP{dep-uniq} to prove a rate of convergence result for $f \equiv 0$. However, we obtain a better result by ignoring the continuous dependence estimate \EQ{dep-zero} and using \PROP{continuous-sup-convolution} directly. In the case $\alpha = 1$, the analogue of the following proposition for standard tug-of-war was obtained in \cite[Theorem 1.3]{Peres:2009}.

\begin{prop}
\label{prop:rate-zero}
Assume that $0 < \alpha \leq 1$, $g \in C^{0,\alpha}(\partial \Omega)$, and let $u_\ep \in C(\bar \Omega)$ be the unique solution of the problem
\begin{equation*}
\left\{ \begin{aligned}
& - \Delta_\infty^\ep u_\ep = 0 & \mbox{in} && \Omega, \\
& u_\ep = g & \mbox{on} && \partial \Omega,
\end{aligned} \right.
\end{equation*}
and $u \in C(\bar \Omega)$ the unique viscosity solution of
\begin{equation*}
\left\{ \begin{aligned}
& - \Delta_\infty u = 0 & \mbox{in} && \Omega, \\
& u = g & \mbox{on} && \partial \Omega.
\end{aligned} \right.
\end{equation*}
Then there exists a constant $C>0$, depending only $\diam(\Omega)$ and $\| g \|_{C^{0,\alpha}(\partial \Omega)}$, such that
\begin{equation*}
\| u - u_\ep \|_{L^\infty(\bar \Omega)} \leq C \ep^\alpha.
\end{equation*}
\end{prop}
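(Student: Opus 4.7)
The plan is to combine the sup-convolution machinery of \PROP{continuous-sup-convolution} with the interior comparison principle \PROP{no-extrema-comparison-ep-thick}, using the boundary estimates \EQ{bcont-rhobound} and \EQ{bcont-cont} to control the boundary strip. Define, for $x \in \bar\Omega_\ep$,
\begin{equation*}
U^\ep(x) := \max_{\bar B(x,\ep)} u \quad \text{and} \quad U_\ep(x) := \min_{\bar B(x,\ep)} u.
\end{equation*}
Since $-\Delta_\infty u = 0$ in $\Omega$, applying \PROP{continuous-sup-convolution} to $u$ and to $-u$ (with $h\equiv 0$) yields
\begin{equation*}
-\Delta_\infty^\ep U^\ep \leq 0 \leq -\Delta_\infty^\ep U_\ep \quad \text{in } \Omega_{2\ep}.
\end{equation*}
At the same time $-\Delta_\infty^\ep u_\ep = 0$ in $\Omega$, and in particular in $\Omega_{2\ep}$.

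Next I apply \PROP{no-extrema-comparison-ep-thick} on the domain $\Omega_\ep$ (with $(\Omega_\ep)_\ep = \Omega_{2\ep}$ playing the role of the interior subset) to the pairs $(U^\ep, u_\ep)$ and $(u_\ep, U_\ep)$. Using \LEM{no-extrema-no-extrema} to supply the absence of strict $\ep$-local extrema, this gives
\begin{equation*}
\sup_{\Omega_\ep} (U^\ep - u_\ep) \leq \sup_{\Omega_\ep \setminus \Omega_{2\ep}} (U^\ep - u_\ep) \quad \text{and} \quad \sup_{\Omega_\ep} (u_\ep - U_\ep) \leq \sup_{\Omega_\ep \setminus \Omega_{2\ep}} (u_\ep - U_\ep).
\end{equation*}
Since $U_\ep \leq u \leq U^\ep$ on $\bar \Omega_\ep$, the same inequalities hold with $u$ in place of $U^\ep$ and $U_\ep$, up to adding the control of $U^\ep - u$ and $u - U_\ep$ on the strip.

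To estimate the right-hand sides, fix $x \in \Omega_\ep \setminus \Omega_{2\ep}$ and pick $x_0 \in \partial \Omega$ with $\path(x, x_0) \leq 2\ep$. Any $y \in \bar B(x,\ep)$ satisfies $\path(y, x_0) \leq 3\ep$, so by the continuum boundary estimate \EQ{bcont-cont} applied to $u$ (with $f \equiv 0$),
\begin{equation*}
|u(y) - g(x_0)| \leq 2\omega_g(3\ep) \leq C[g]_{C^{0,\alpha}(\partial \Omega)} \ep^\alpha,
\end{equation*}
where I have used that $\omega_g(s) \leq [g]_{C^{0,\alpha}} s^\alpha$ for $g \in C^{0,\alpha}$. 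Taking max and min over $y \in \bar B(x,\ep)$ gives $|U^\ep(x) - g(x_0)| + |U_\ep(x) - g(x_0)| \leq C\ep^\alpha$. The finite-difference boundary estimate \EQ{bcont-rhobound}, applied both to $u_\ep$ and to $-u_\ep$ (with $f\equiv 0$), yields $|u_\ep(x) - g(x_0)| \leq 2\omega_g(2\ep) \leq C\ep^\alpha$. The triangle inequality then bounds both right-hand sides above by $C\ep^\alpha$, so $\sup_{\Omega_\ep} |u - u_\ep| \leq C\ep^\alpha$. For points $x \in \bar\Omega \setminus \Omega_\ep$ (where $U^\ep$ is not defined) I use \EQ{bcont-cont} and \EQ{bcont-rhobound} directly to bound $|u(x) - g(x_0)|$ and $|u_\ep(x) - g(x_0)|$ by $C\ep^\alpha$ for some nearby $x_0 \in \partial \Omega$.

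The main obstacle is not a deep one but rather the bookkeeping in the boundary strip: the sup-convolutions $U^\ep$, $U_\ep$ are only defined on $\bar \Omega_\ep$, forcing the comparison principle to be applied in this smaller set, and then the outer shell $\bar\Omega\setminus\Omega_\ep$ must be handled separately via the two boundary Lipschitz/Hölder-type estimates already established. The key conceptual point — which is exactly the parenthetical remark in the paper — is that \PROP{continuous-sup-convolution} lets us avoid paying the $\gamma^{1/3}$ cost of \THM{dependence-zero}, so the rate is governed only by the Hölder exponent of $g$.
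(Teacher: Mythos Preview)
Your proof is correct and follows essentially the same route as the paper: apply \PROP{continuous-sup-convolution} to the continuum solution to obtain a finite-difference sub/supersolution, invoke \PROP{no-extrema-comparison-ep-thick} on $\Omega_\ep$ against $u_\ep$, and control the resulting boundary strip via \EQ{bcont-rhobound} and \EQ{bcont-cont}. You are in fact slightly more careful than the paper in separating out the shell $\bar\Omega \setminus \Omega_\ep$ and in treating both directions explicitly; the only caveat is that $(\Omega_\ep)_\ep$ need not coincide exactly with $\Omega_{2\ep}$ under the path metric, but since \PROP{continuous-sup-convolution} gives the inequality on all of $\Omega_{2\ep} \supseteq (\Omega_\ep)_\ep$ this causes no trouble.
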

\begin{proof}
The proof is very similar to the proof of \PROP{rate-pos}, above. The only difference is that the hypothesis that $|f| > a$ is not needed, since for $f\equiv 0$ we also have $f^{2\ep}  \equiv 0$. 
\end{proof}

%%%%%%%%%%%%%%%%%%%%%%%%%%%%%%%%%%%%%%%%%%%%%%%%
%%%%%%%%%%%%%%%%%%%%%%%%%%%%%%%%%%%%%%%%%%%%%%%%
%%%%%%%%%%%%%%%%%%%%%%%%%%%%%%%%%%%%%%%%%%%%%%%%
%%%%%%%%%%%%%%%%%%%%%%%%%%%%%%%%%%%%%%%%%%%%%%%%
%%%%%%%%%%%%%%%%%%%%%%%%%%%%%%%%%%%%%%%%%%%%%%%%
%%%%%%%%%%%%%%%%%%%%%%%%%%%%%%%%%%%%%%%%%%%%%%%%
%%%%%%%%%%%%%%%%%%%%%%%%%%%%%%%%%%%%%%%%%%%%%%%%

\section{Open Problems}\label{sec:seven}

Let us discuss some open problems which the authors find interesting. We begin by repeating a question posed in \cite{Peres:2009}.

\begin{openproblem}\label{oprob:nonnegative-uniqueness}
If $f \geq 0$ or $f\leq 0$ in $\Omega$, does the Dirichlet problem
\begin{equation*}
\left\{ \begin{aligned}
& - \Delta_\infty u = f & \mbox{in} && \Omega, \\
& u = g & \mbox{on} && \partial \Omega,
\end{aligned} \right.
\end{equation*}
have a unique solution?
\end{openproblem}

The authors believe the answer to \OPROB{nonnegative-uniqueness} is \emph{Yes}. Motivated by  \THM{no-extrema-comparison}, we offer the following conjecture.

\begin{conj}\label{conj:no-extrema-comparison}
Assume that the functions $u, -v \in \USC(\bar\Omega)$ and $f \in C(\Omega)$ satisfy
\begin{equation*}
- \Delta_\infty u \leq f \leq - \Delta_\infty v \quad \mbox{in} \  \Omega.
\end{equation*}
Suppose also that $u$ has no strict local maximum, or $v$ has no strict local minimum, in $\Omega$. Then
\begin{equation*}
\max_{\bar \Omega} (u-v) = \max_{\partial \Omega} (u-v).
\end{equation*}
\end{conj}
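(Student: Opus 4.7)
The strategy is to reduce \CONJ{no-extrema-comparison} to its finite difference version \THM{no-extrema-comparison} via the sup/inf-convolutions of \PROP{continuous-sup-convolution}, and then pass to the limit. Arguing by contradiction, suppose that
\[
m := \max_{\bar\Omega}(u-v) - \max_{\partial\Omega}(u-v) > 0,
\]
and (by the symmetric structure of the hypothesis) that $u$ has no strict local maximum in $\Omega$. For each small $\ep > 0$, define
\[
u^\ep(x) := \max_{\bar B(x,\ep)} u, \qquad v_\ep(x) := \min_{\bar B(x,\ep)} v, \qquad x \in \Omega_\ep.
\]
By \PROP{continuous-sup-convolution} applied to $u$ and to $-v$, these satisfy $-\Delta^\ep_\infty u^\ep \leq f^{2\ep}$ and $-\Delta^\ep_\infty v_\ep \geq f_{2\ep}$ in $\Omega_{2\ep}$, where $f^{2\ep}$ and $f_{2\ep}$ denote the max and min of $f$ over closed balls of radius $2\ep$. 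Since $u^\ep \to u$ and $v_\ep \to v$ locally uniformly in $\Omega$ and the maximum of $u-v$ is attained strictly in the interior, for all sufficiently small $\ep$ one has $\sup_{\Omega_\ep}(u^\ep - v_\ep) > \sup_{\Omega_\ep \setminus \Omega_{2\ep}}(u^\ep - v_\ep)$.

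The first hurdle is the mismatch $f^{2\ep} \geq f_{2\ep}$, which obstructs a direct comparison of the discrete Laplacians. I would absorb it by first replacing $v$ with a strict supersolution $\tilde v \in \LSC(\bar\Omega)$ satisfying $-\Delta_\infty \tilde v \geq f + \gamma$ in $\Omega$ for some $\gamma > 0$, with $\|\tilde v - v\|_{L^\infty(\bar\Omega)}$ as small as desired. A natural construction is a continuum analogue of \LEM{super-strict}: first apply the patching argument of Crandall, Gunnarsson, and Wang (as used in the proof of \THM{dependence-zero}) to obtain a supersolution $v' \leq v$ with $L(v',\cdot) \geq \delta > 0$ globally, and then set $\tilde v := \lambda(v')$ for a suitable smooth concave increasing $\lambda$ close to the identity. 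A formal chain-rule computation, to be justified in the viscosity sense, yields
\[
-\Delta_\infty \tilde v \geq \lambda'(v')(-\Delta_\infty v') - \lambda''(v')\,|Dv'|^2 \geq f + \gamma,
\]
with $\gamma = -(\min \lambda'')\delta^2$. By uniform continuity of $f$ on compact subsets of $\Omega$, we have $f^{2\ep} \leq f_{2\ep} + \gamma$ for all sufficiently small $\ep$, hence $-\Delta^\ep_\infty u^\ep \leq -\Delta^\ep_\infty \tilde v_\ep$ in $\Omega_{2\ep}$. The contrapositive of \PROP{no-extrema-comparison-ep-thick}, applied on the domain $\Omega_\ep$, then forces $u^\ep$ to have a strict $\ep$-local maximum at some $x^\ep \in \Omega_{2\ep}$ witnessed by a closed set $F^\ep \subseteq \Omega_{2\ep}$.

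The main obstacle is promoting this strict $\ep$-local maximum of $u^\ep$ to a genuine strict local maximum of $u$ in the limit $\ep \to 0$. The sets $F^\ep$ may degenerate or expand uncontrollably with $\ep$, and the continuum notion of strict local maximum is considerably more rigid than its finite-difference analogue: one cannot simply extract a subsequential limit $x^\ep \to x_0$ and declare $x_0$ to be a strict local maximum. My plan is to quantify the strictness using the slack $\gamma$ produced above --- specifically, to establish a lower bound of the form $u^\ep(x^\ep) - u^\ep(y) \gtrsim \gamma\,\dist(y, F^\ep)$ for $y \in (F^\ep)^\ep \setminus F^\ep$, and then pass this quantitative version to the limit using the upper semicontinuity of $u$. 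This is precisely the gap that presumably prevents the authors from upgrading \THM{no-extrema-comparison} to \CONJ{no-extrema-comparison}, and bridging it likely requires a genuinely new construction --- perhaps producing barriers in the continuum directly via comparison with quadratic cones (\LEM{cca}), rather than routing through the finite difference theory.
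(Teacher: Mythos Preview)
This statement is a \emph{conjecture} in the paper, not a theorem; the authors explicitly state that they are unable to prove it (see the paragraph following \CONJ{no-extrema-comparison} in \SEC{seven}). There is therefore no paper proof to compare against, and your proposal should be read as an attempted proof of an open problem.

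You have correctly identified both the natural line of attack --- pass to the finite difference equation via \PROP{continuous-sup-convolution} and invoke \THM{no-extrema-comparison} or \PROP{no-extrema-comparison-ep-thick} --- and the principal obstruction, namely that a strict $\ep$-local maximum of $u^\ep$ need not produce a strict local maximum of $u$ in the limit. Your own final paragraph concedes that this gap is unresolved, so what you have written is a proof sketch with an acknowledged hole rather than a proof.

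There is also a secondary difficulty in your strictness step that you have glossed over. The patching lemma of Crandall--Gunnarsson--Wang, as invoked in the proof of \THM{dependence-zero}, takes an \emph{infinity harmonic} function $u_0$ and produces a supersolution of $-\Delta_\infty v \geq 0$ with $L(v,\cdot) \geq \delta$. You want to feed in a supersolution of $-\Delta_\infty v \geq f$ for general sign-changing $f$ and get out another supersolution of the same inequality with uniformly positive local Lipschitz constant. No such extension is known; indeed, if $f$ changes sign then $v$ may genuinely have interior critical points (the hypothesis of the conjecture only rules out strict local \emph{maxima of $u$} in the case you are treating), and it is unclear how patching would eliminate them while preserving $-\Delta_\infty v' \geq f$. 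Without this step you cannot manufacture the gap $\gamma > 0$ needed to absorb $f^{2\ep} - f_{2\ep}$, and the discrete comparison does not apply. This is consistent with the authors' remark that the conjecture hinges on understanding continuous dependence in $f$, which is precisely what your strictness construction would need.
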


Notice that if \CONJ{no-extrema-comparison} is true, then the answer to \OPROB{nonnegative-uniqueness} is \emph{Yes}. Why are we unable to prove \CONJ{no-extrema-comparison}? As explained in \SEC{six}, this question is related to the study of the dependence of the solutions $\bar u_\ep$ and $\underline u_\ep$ to the finite difference equation on the function $f$ and the parameter $\ep > 0$. If we could understand this dependence, we believe that we could prove \CONJ{no-extrema-comparison}. 

Another related question is the following.

\begin{openproblem}\label{oprob:convergence-max-min}
In the case that the continuum problem does not have uniqueness, is it true that $\underline u_\ep \rightarrow \underline u$ and $\bar u_\ep \rightarrow \bar u$ as $\ep \to 0$?
\end{openproblem}

Notice that if \OPROB{convergence-max-min} could be resolved in the affirmative, then it would follow that for any $f$ for which we have uniqueness for the finite difference problem for all $\ep > 0$, we also have uniqueness for the continuum problem. Recalling \COR{comparison}, this would imply that the answer to \OPROB{nonnegative-uniqueness} is \emph{Yes}, and we would also deduce a continuum analogue of \THM{small-f-uniqueness}.

\begin{openproblem}
Is the dependence of the right-side of the estimate \EQ{dep-zero} on the parameter $\gamma$ optimal?
\end{openproblem}

\begin{openproblem}
\LEM{epcont} provides a continuity estimate for solutions of the finite difference equation on ``scales larger than $\ep$." This can be improved in the case that $f \equiv 0$, in which we have a uniform continuity estimate on ``all scales" as shown in \PROP{minlip}. Is the hypothesis $f \equiv 0$ necessary, or should we expect a nonzero running payoff function to create oscillations in the value functions on small scales which cannot be controlled?
\end{openproblem}

\subsection*{\it{Acknowledgements}}
We thank Tonci Antunovic, Michael G. Crandall, Vesa Julin, Scott Sheffield, and Stephanie Somersille for helpful comments and suggestions.

%%%%%%%%%%%%%%%%%%%%%%%%%%%%%%%%%%%%%%%%%%%%%%%%
%%%%%%%%%%%%%%%%%%%%%%%%%%%%%%%%%%%%%%%%%%%%%%%%
%%%%%%%%%%%%%%%%%%%%%%%%%%%%%%%%%%%%%%%%%%%%%%%%
%%%%%%%%%%%%%%%%%%%%%%%%%%%%%%%%%%%%%%%%%%%%%%%%
%%%%%%%%%%%%%%%%%%%%%%%%%%%%%%%%%%%%%%%%%%%%%%%%
%%%%%%%%%%%%%%%%%%%%%%%%%%%%%%%%%%%%%%%%%%%%%%%%
%%%%%%%%%%%%%%%%%%%%%%%%%%%%%%%%%%%%%%%%%%%%%%%%

\bibliographystyle{amsplain}
\bibliography{tugofwars}
\end{document}